\newtheorem{theorem}{Theorem}
\newtheorem{corollary}{Corollary}
\newtheorem{remark}{Remark}
\newcommand{\ds}{\displaystyle}
\newcommand{\s}{\sigma}
\newcommand{\R}{\mathbb{R}}
\newcommand{\C}{\mathbb{C}}
\newcommand{\bbS}{\mathbb{S}}
\newcommand{\bb}{{\bf b}}
\newcommand{\bc}{{\bf c}}
\newcommand{\bff}{{\bf f}}
\newcommand{\bg}{{\bf g}}
\newcommand{\bp}{{\bf p}}
\newcommand{\bu}{{\bf u}}
\newcommand{\bv}{{\bf v}}
\newcommand{\bx}{{\bf x}}
\newcommand{\by}{{\bf y}}
\newcommand{\bA}{{\bf A}}
\newcommand{\bB}{{\bf B}}
\newcommand{\bC}{{\bf C}}
\newcommand{\bE}{{\bf E}}
\newcommand{\bH}{{\bf H}}
\newcommand{\cbH}{\mbox{\boldmath${\EuScript{H}}$} }
\newcommand{\cbJ}{\mbox{\boldmath${\EuScript{J}}$} }
\newcommand{\bI}{{\bf I}}
\newcommand{\bN}{{\bf N}}
\newcommand{\bU}{{\bf U}}
\newcommand{\bV}{{\bf V}}
\newcommand{\bW}{{\bf W}}
\newcommand{\bY}{{\bf Y}}
\newcommand{\bbr}{{\bf \widetilde{b}}}
\newcommand{\bcr}{{\bf \widetilde{c}}}
\newcommand{\bffr}{{\bf \widetilde{f}}}
\newcommand{\bgr}{{\bf \widetilde{g}}}
\newcommand{\bxr}{{\bf \widetilde{x}}}
\newcommand{\byr}{{\bf \widetilde{y}}}
\newcommand{\bhp}{{\bf \widehat{p}}}
\newcommand{\bAr}{{\bf \widetilde{A}}}
\newcommand{\bBr}{{\bf \widetilde{B}}}
\newcommand{\bCr}{{\bf \widetilde{C}}}
\newcommand{\bEr}{{\bf \widetilde{E}}}
\newcommand{\bHr}{{\bf \widetilde{H}}}
\newcommand{\bNr}{{\bf \widetilde{N}}}
\newcommand{\bYr}{{\bf \widetilde{Y}}}
\newcommand{\cA}{\mathcal{A}}
\newcommand{\cP}{\mathcal{P}}
\newcommand{\cQ}{\mathcal{Q}}
\newcommand{\cAr}{\mathcal{\widetilde{A}}}
\begin{document}

\title{Interpolatory Model Reduction of Parameterized Bilinear Dynamical Systems
	\thanks{
		Supported in part by the National Science Foundation under contract DMS--1522616 and the National Institute
		for Occupational Safety and Health under contract 200-2014-59669. The work of Gugercin was also supported in part by 
		the Alexander von Humboldt Foundation.
	}}
\date{}
\author{Andrea Carracedo Rodriguez\footnote{Department of Mathematics, Virginia Tech, Blacksburg, VA, USA. \newline \texttt{\{crandrea, gugercin, jborggaard\}@vt.edu}} 
\and Serkan Gugercin$^\dag$ 
\and Jeff Borggaard$^\dag$}

\maketitle

\vspace{-4ex}
\begin{center}
\textit{Originally submitted for publication in July 2017}
\end{center}

\begin{abstract}
Interpolatory projection methods for model reduction of nonparametric linear dynamical systems have been successfully extended to nonparametric bilinear dynamical systems. 
However, this is not the case for parametric bilinear systems. In this work, we aim to close this gap by providing a natural extension of interpolatory projections to model reduction of parametric bilinear dynamical systems.
We introduce necessary conditions that the projection subspaces must satisfy to obtain parametric tangential interpolation of each subsystem transfer function. 	
These conditions also guarantee that the parameter sensitivities (Jacobian) of each subsystem transfer function is matched tangentially by those of the corresponding reduced order model transfer function. 
Similarly, we obtain conditions for interpolating  the parameter Hessian of  the transfer function by including extra vectors in the projection subspaces.
As in the parametric linear case, the basis construction for two-sided projections does not require computing the Jacobian or the Hessian.
\end{abstract}

\textbf{Keywords:} Model reduction, Parametric, Bilinear, Interpolation.

\section{Introduction}
	\label{intro}


Simulation of dynamical systems has become an essential part in the development of science to study complex physical phenomena. 
However, as the ever increasing need for accuracy has lead to ever larger dimensional dynamical systems, this increased dimension often makes the desired numerical simulations prohibitively expensive to perform. 
Model order reduction (MOR) is one remedy for this predicament. 
MOR tackles this issue by constructing a much lower dimensional representation of the corresponding full-order dynamical system, which is cheap to simulate, yet provides high-fidelity, i.e., it provides a good approximation to the original quantity of  interest. 
In many applications such as optimization, design, control, uncertainty quantification, and inverse problems, the dynamics of the system are defined by a set of parameters that describe initial conditions, material properties, etc.  
Since carrying out model reduction for every parameter value is not computationally feasible, the goal in the parameterized setting is to construct a parametric reduced model that can approximate one or more quantities of interest well for the whole parameter range of interest.  
This lead to the parametric model reduction framework. For more specific details on both parametric and nonparametric model reduction, we refer the reader to \cite{antoulas2001asurvey,baur2014model,antoulas2005approximation,benner2015survey,benner2017model,hesthaven2016certified} and the references therein. 

In this paper, we will focus on large-scale bilinear systems parametrized with the parameter vector $\bp \in \R^\nu$ and represented in state-space form
	\begin{align} 
	\left\{ \begin{array}{l}
	\label{binon}
	\bE ( \bp ) \dot{ \bx } (t;\bp) 	 = \ds \bA (\bp) \bx (t;\bp) + \sum_{j=1}^m \bN_j (\bp) \bx (t) u_j (t) + \bB (\bp) \bu (t), \\[1ex]
	\by (t;\bp) 		 = \bC (\bp) \bx (t;\bp), 		
	\end{array} \right.
	\end{align}
where  $\bx(t;\bp) \in \R^n$,  $\by(t;\bp) \in \R^\ell$, and $\bu(t) = [u_1(t),~u_2(t),\ldots,~u_m(t)]^\top\in \R^m$ denote the states, outputs (measurements/quantities of interest), and inputs (excitation/forcing) of the bilinear dynamical system, respectively. Thus, the corresponding state-matrices have the dimensions  
$\bE(\bp),\bA(\bp), \bN_j(\bp) \in \R^{n\times n}$, for $j=1,\dots m$, $\bB(\bp) \in \R^{n\times m}$, and $\bC(\bp) \in \R^{\ell\times n}$. In this paper, we assume that the matrix $\bE(\bp)$ is nonsingular for every parameter value $\bp \in \R^\nu$.
Bilinear systems of the form \eqref{binon} appear in a variety of applications 
such as the study of biological species and nuclear fission, 
are used in the context of stochastic control problems,
and frequently appear in modeling nonlinear phenomena of small magnitude, for instance, \cite{rugh1981nonlinear,mohler1991nonlinear,mohler1970natural,weiner1980sinusoidal,hartmann2013balanced,benner2011lyapunov,benner2017dual}.   
We are interested in large-scale settings where simulating/solving \eqref{binon} for a wide variety of inputs $ \bu (t) $  and parameters $\bp$ solely to determine the output $ \by (t;\bp) $ is too expensive. Therefore, our goal is to construct a reduced parametric bilinear system of order $r \ll n$ in state-space form
	\begin{align}
	\widetilde{\Sigma} : \ \left\{ 
	\begin{array}{l}
	\label{brom}
	\bEr ( \bp ) \dot{ \bxr } (t;\bp) 	 = \ds \bAr ( \bp ) \bxr (t;\bp) + \sum_{j=1}^m \bNr_j ( \bp ) \bxr (t;\bp) u_j (t) + \bBr ( \bp ) \bu (t), \\[1ex]
	\byr (t;\bp) 		 = \bCr ( \bp ) \bxr (t;\bp), 		
	\end{array} \right.
	\end{align}
where $\bEr(\bp),\bAr(\bp), \bNr_j(\bp) \in \R^{r\times r}$, for $j=1,\dots m$, $\bBr(\bp) \in \R^{r\times m}$, and $\bCr(\bp) \in \R^{\ell\times r}$ such that the reduced output $\byr (t;\bp)$ provides a good approximation to the original output $\by(t;\bp)$ for a variety of inputs $\bu(t)$ and a range of parameters $\bp$.

\emph{Non-parametric} bilinear systems where the state-space matrices $\bE$, $\bA$,
$\bN_j$ for $j=1,\ldots,m$, $\bB$ and $\bC$ are constant, have been 
    studied thoroughly, and  input-independent/optimal model reduction 
techniques from the linear case ($\bN_j = 0$ for $j=1,\ldots,m$) have been successfully generalized to non-parametric bilinear systems. 
For example, \cite{phillips2003projection,bai2006projection,breiten2010krylov,benner2011generalised,ahmad2017krylov} have extended model reduction via rational interpolation \cite{antoulas2010interpolatory,beattie2017model} from linear to non-parametric bilinear systems. 
The optimal model reduction of linear dynamical systems in the $\mathcal{H}_2$ norm via the iterative rational Krylov algorithm (IRKA) \cite{gugercin2008h_2} has been generalized to bilinear systems via bilinear IRKA (B-IRKA) \cite{benner2012interpolation}. 
Later, \cite{flagg2015multipoint}  showed that, as with IRKA and $\mathcal{H}_2$ model reduction in the linear case, the reduced model via B-IRKA also yields a Hermite interpolation in this case, but in the sense of  Volterra series interpolation. 
Similarly,  gramians and balanced truncation  (BT) for linear dynamical systems \cite{mullis1976synthesis,moore1981principal} have also been generalized to nonparametric bilinear systems \cite{al1993new,gray1998energy,hartmann2013balanced,benner2011lyapunov}. Moreover, \cite{antoulas2016model} has applied the Loewner framework \cite{mayo2007framework} to bilinear systems.

A plethora of work exists on model reduction of parametrized linear dynamical systems, i.e., $\bN_j = 0$ for $j=1,\ldots,m$ in \eqref{binon}; see, for example, \cite{baur2011interpolatory,benner2015survey,gunupudi2003ppt,Daniel2004,BenF14,Panzer_etal2010,AmsallemFarhat2011,Degroote2010,BuiThanh2008} and the references therein. In this paper, we are interested in input-independent (transfer function-based) model reduction of parametric bilinear systems where only the state-space matrices enter into the model reduction process and there is no need to choose a specific input $\bu(t)$ nor to simulate the full model \eqref{binon}. More specifically, we are focused on parametric model reduction that uses the concept of (parametric) rational interpolation. These methods, also referred to as interpolatory  parametric model reduction, have been successfully applied to parametric \emph{linear} dynamical systems; see, e.g., \cite{baur2011interpolatory,Daniel2004,BenF14}. However, unlike the extensions of interpolation theory and IRKA to \emph{non-parametric} bilinear systems,  interpolatory methods have not yet been generalized to parametric bilinear systems. In this paper, we close this gap and provide a natural extension of interpolatory projections to parametric bilinear dynamical systems.  
Our framework yields a reduced parametric bilinear model whose subsystem transfer functions will (tangentially) interpolate the original subsystem transfer functions together with the parameter sensitivities and Hessians at the sampled frequencies and parameter values along chosen directions.  Note that we are {\em not} focusing on the problem of selecting parameter samples, but rather on ensuring tangential interpolation of the full and reduced models at the chosen points and directions.  One can then couple this interpolatory model reduction algorithm to a desired sampling strategy. 

The remainder of the paper is organized as follows: Section \ref{problem} introduces the problem description and presents the main theoretical results. Section \ref{examples} illustrates the theory using two numerical  examples. This is followed by the conclusions and future directions in Section \ref{sec:conc}.


\section{Problem Description}
	\label{problem}

In this section, we introduce the ingredients of the model reduction problem for parametric bilinear systems such as projection, subsystem transfer function, and tangential interpolation.  We then present the main results of the paper.

\subsection{Projection-based model reduction of parametric bilinear systems via global basis}  \label{sec:projintro}

We construct the reduced parametric bilinear system \eqref{brom} via projection. 
We follow the \emph{global basis approach} (as opposed to using a local basis and performing extrapolation \cite{hay2009local,zimmermann2016local} or interpolation \cite{AmsallemFarhat2011,Degroote2010,zimmermann2014locally}). Thus we construct two constant global model reduction bases, namely $ \bV \in \C^{n \times r}$ and $ \bW \in \C^{n \times r}$, that capture the parametric dependence of the underlying system using the information from various sampling points.  We refer the reader to \cite{benner2015survey} for detailed explanations regarding global and local bases, and different sampling options. 
The subspaces $\bV$ and $\bW$ are computed to enforce specific interpolation conditions as discussed in Section \ref{sec:conditions}.

Once the model reduction bases $\bV$ and $\bW$ are constructed, the reduced model quantities in \eqref{brom} are obtained via Petrov-Galerkin projection:
\begin{equation} \label{eq:romss}
\begin{array}{llll}
\bEr(\bp) =  \bW^\top \bE (\bp) \bV,  & \bAr =  \bW^\top \bA (\bp) \bV, &  \bBr(\bp) =  \bW^\top \bB (\bp), \\
\bCr(\bp) = \bC (\bp) \bV , ~~~~~~\mbox{and}~~~   & \bNr_j =  \bW^\top \bN_j (\bp) \bV ~~~~\mbox{for}~j=1,\ldots,m.
\end{array}
\end{equation}
Now consider reevaluating reduced model quantities in \eqref{eq:romss} for a new parameter value $\bhp \in \R^\nu$. Consider the case of $\bEr(\bhp)$. This will require re-evaluating the projection $\bEr(\bhp) =  \bW^\top \bE (\bhp) \bV$ where the operations depend on the original system dimension $n$. In practice, many problems exhibit an affine parametric structure, which makes the projection step numerically efficient. For simplicity, continue to consider the matrix $\bE(\bp)$  only. Assume that   $\bE(\bp)$   has the following affine parametric form 
\begin{equation} \label{eq:affine}
\bE(\bp)=\bE_0+ \sum_{i=1}^N f_i(\bp)\bE_i,
\end{equation}
where $f_i$ are  scalar (nonlinear) functions reflecting the parametric dependency, and $\bE_i \in \R^{n \times n}$ for $i=0,\ldots,N$ are constant matrices. Then, the reduced matrix 
$\bE_r(\bp)$ in \eqref{eq:romss}  
is given by
\begin{eqnarray} \label{eq:affine_rom}
\bE_r(\bp)= \bW^\top\bE_0 \bV+ \sum_{i=1}^N f_i(\bp) \bW^\top\bE_i \bV, 
\end{eqnarray}
where $\bW^\top\bE_i \bV$, for $i=0,\ldots,N$ have to be computed once in an offline phase then can be recombined for efficient computation of $\bE_r(\bhp)$ in any online phase. 
The same discussion applies to other matrices in \eqref{eq:romss} as well.  
When $\bE(\bp)$ does not admit such an affine parametrization as in \eqref{eq:affine}, one usually performs an affine approximation of $\bE(\bp)$ first, usually via a matrix version of (Discrete) Empirical Interpolation Method \cite{Grepl07,Chaturantabut2010}; see \cite{benner2015survey} for details. 
We will revisit this issue in the second numerical example in Section \ref{ex:heat}.

\subsection{Interpolatory projections for parametric linear systems} \label{sec:linear}
A powerful framework in the case of linear dynamical systems
	\begin{align} 
	\label{eq:lfom}
	\bE ( \bp ) \dot{ \bx } (t;\bp) = \ds \bA (\bp) \bx (t;\bp) + \bB (\bp) \bu (t), 
	~~~~\by (t;\bp) 		 = \bC (\bp) \bx (t;\bp), 		
	\end{align}
is to transform the problem into the frequency domain via Laplace transform. To do so, let ${\bY}(s;\bp)$ and ${\bU}(s)$ denote the Laplace transforms of $\by(t;\bp)$ and $\bu(t)$, respectively. Then, applying the Laplace transform to \eqref{eq:lfom} leads to
$$
\bY(s;\bp) = \bH(s;\bp) \bU(s),~~~\mbox{where}~~~\bH(s;\bp) = \bC(\bp)\left(s\,\bE(\bp)\, -\,\bA(\bp)\right)^{-1}\bB(\bp)
$$
is the transfer function of \eqref{eq:lfom}. Then, the goal is to construct a reduced parametric linear model  
	\begin{align} 
	\label{eq:lrom}
	\bEr ( \bp ) \dot{ \bxr } (t;\bp) = \ds \bAr (\bp) \bxr (t;\bp) + \bBr (\bp) \bu (t), 
	~~~~\byr(t;\bp) 		 = \bCr (\bp) \bxr (t;\bp), 		
	\end{align}
	whose reduced parametric transfer function $\bHr(s;\bp) = \bCr(\bp)\left(s\,\bEr(\bp)\, -\,\bAr(\bp)\right)^{-1}\bBr(\bp)$ approximates  $\bH(s;\bp)$ well, which would in turn imply
	$\byr(t;\bp) \approx \by(t;\bp)$ since $\bY(s;\bp) - \bYr(s;\bp) = (\bH(s;\bp) -\bHr(s;\bp))\bU(s)$. One way to enforce  $\bHr(s;\bp)\approx  \bH(s;\bp)$ is via rational interpolation: Given the frequency interpolation points  $\{\s_1,\ldots,\s_{q_s}\} \subset \C $, the right tangential directions 	$\{\bb_1,\ldots,\bb_{q_s}\} \subset \C^m $, the left tangential directions $\{\bc_1,\ldots,\bc_{q_s}\} \subset \C^\ell$, and the parameter interpolation samples $\{\bhp_1,\ldots,\bhp_{q_p}\} \subset \R^\nu$, find a reduced model \eqref{eq:lrom} such that $\bHr(s;\bp)$ is a Hermite tangential interpolant to $\bH(s;\bp)$ at the selected samples, i.e.,
	$$
	\begin{array}{cc}
	\bH(\s_i;\bhp_j) \bb_i =\bHr(\s_i;\bhp_j) \bb_i,  & \bc_i^\top\bH(\s_i;\bhp_j)  =\bc_i^\top\bHr(\s_i;\bhp_j), \\
	\frac{\partial}{\partial s} \left( \bc_i^\top \bH(\s_i;\bhp_j) \bb_i \right) = 
	\frac{\partial}{\partial s} \left( \bc_i^\top \bHr(\s_i;\bhp_j) \bb_i \right), & ~~~\mbox{and}~~~
	\nabla_\bp \left( \bc_i^\top \bH ( \s_{i}; \bhp_j ) \bb_{i} \right)  = \nabla_\bp \left( \bc_{i}^\top \bHr_1 ( \s_{i}; \bhp_j ) \bb_i \right) ,
	\end{array}
	$$
	 for $i =1,\ldots,q_s$ and $j= 1,\ldots,q_p$. In other words, the reduced model tangentially matches transfer function values in addition to its frequency and parametric derivatives at the sampled points. One can impose higher order interpolation conditions at the frequency and parameter samples as well, such as the parameter Hessian. 
We omit it for brevity here.   
The following result from \cite{baur2011interpolatory} shows how to construct model reduction bases $\bV$ and $\bW$ that satisfy the desired interpolation conditions. 
	 
	 \begin{theorem} \label{thm:linear} 
	 Given $\bH(s;\bp) =\bC(\bp)\left(s\,\bE(\bp)\, -\,\bA(\bp)\right)^{-1}\bB(\bp) $, let  $\bHr(s;\bp) = \bCr(\bp)\left(s\,\bEr(\bp)\, -\,\bAr(\bp)\right)^{-1}\bBr(\bp)$ be obtained via Petrov-Galerkin projection using the bases $\bV$ and $\bW$.  
Let $ \s \in \C $, $ \bhp \in \R^\nu $, $ \bb \in \C^m \setminus \{ \bf0 \} $, and $ \bc \in \C^\ell \setminus \{ \bf0 \} $. Define 
\begin{equation} \label{eq:cA}
\cA ( s; \bp )  = s \bE ( \bp ) - \bA ( \bp ) 	 \quad \mbox{and}\quad
	\cAr ( s; \bp )  = s \bEr ( \bp ) - \bAr ( \bp ).
\end{equation}		
	\begin{enumerate}[(a)]
	\item If $ \cA (\s; \bhp)^{-1} \bB (\bhp) \bb \in \textup{Ran} (\bV) $, then $$ \bH(\s; \bhp) \bb = \bHr (\s; \bhp) \bb;$$
	\item If $ \cA (\s; \bhp)^{-\top} \bC (\bhp)^\top \bc \in \textup{Ran} (\bW) $, then $$ \bc^\top \bH (\s; \bhp) = \bc^\top \bHr (\s; \bhp);$$
	\item If  both (a) and (b) hold simultaneously, then 
		$$ \frac{\partial}{\partial s} \left( \bc^\top \bH (\s; \bhp) \bb \right)
		 = \frac{\partial}{\partial s} \left( \bc^\top \bHr (\s; \bhp) \bb \right)
		 \quad\mbox{and}\quad
		\nabla_\bp \left( \bc^\top \bH (\s; \bhp) \bb \right) 
		= \nabla_\bp \left( \bc^\top \bHr (\s; \bhp) \bb \right), $$
	\end{enumerate}
	provided $ \cA (\s; \bhp) $ and $ \cAr (\s; \bhp) $ are invertible.
\end{theorem}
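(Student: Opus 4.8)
The plan is to route everything through the primal and dual state vectors attached to the interpolation data, together with two oblique projectors built from $\bV$ and $\bW$. Write the Petrov--Galerkin reduced quantities as $\cAr(s;\bp)=\bW^\top\cA(s;\bp)\bV$, $\bBr(\bp)=\bW^\top\bB(\bp)$, $\bCr(\bp)=\bC(\bp)\bV$, and introduce the full-order solves $\bv(s;\bp):=\cA(s;\bp)^{-1}\bB(\bp)\bb$, $\bff(s;\bp):=\cA(s;\bp)^{-\top}\bC(\bp)^\top\bc$ and the reduced-order solves $\bxr(s;\bp):=\cAr(s;\bp)^{-1}\bBr(\bp)\bb$, $\bffr(s;\bp):=\cAr(s;\bp)^{-\top}\bCr(\bp)^\top\bc$. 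The hypotheses of (a) and (b) read exactly $\bv(\s;\bhp)\in\textup{Ran}(\bV)$ and $\bff(\s;\bhp)\in\textup{Ran}(\bW)$, while the targets are $\bHr(\s;\bhp)\bb=\bC(\bhp)\bv(\s;\bhp)$ and $\bc^\top\bHr(\s;\bhp)=\bff(\s;\bhp)^\top\bB(\bhp)$.

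For (a), I would use $\bP:=\bV\,\cAr(\s;\bhp)^{-1}\bW^\top\cA(\s;\bhp)$: since $\cAr(\s;\bhp)=\bW^\top\cA(\s;\bhp)\bV$, one checks $\bP\bV=\bV$ and $\bP^2=\bP$, so $\bP$ is a projector onto $\textup{Ran}(\bV)$. Because $\bBr(\bhp)\bb=\bW^\top\bB(\bhp)\bb=\bW^\top\cA(\s;\bhp)\bv(\s;\bhp)$, we get $\bxr(\s;\bhp)=\cAr(\s;\bhp)^{-1}\bW^\top\cA(\s;\bhp)\bv(\s;\bhp)$, i.e. $\bV\bxr(\s;\bhp)=\bP\,\bv(\s;\bhp)$; if $\bv(\s;\bhp)\in\textup{Ran}(\bV)$ then $\bP\,\bv(\s;\bhp)=\bv(\s;\bhp)$, hence $\bHr(\s;\bhp)\bb=\bCr(\bhp)\bxr(\s;\bhp)=\bC(\bhp)\bV\bxr(\s;\bhp)=\bC(\bhp)\bv(\s;\bhp)=\bH(\s;\bhp)\bb$. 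Part (b) is the transpose of this argument, with $\bV$, $\bB\bb$, $\cA$ replaced by $\bW$, $\bC^\top\bc$, $\cA^\top$: the analogous projector onto $\textup{Ran}(\bW)$ sends $\bff(\s;\bhp)$ to $\bW\bffr(\s;\bhp)$, so $\bff(\s;\bhp)\in\textup{Ran}(\bW)$ forces $\bW\bffr(\s;\bhp)=\bff(\s;\bhp)$, whence $\bc^\top\bHr(\s;\bhp)=\bffr(\s;\bhp)^\top\bBr(\bhp)=\bff(\s;\bhp)^\top\bB(\bhp)=\bc^\top\bH(\s;\bhp)$. These are full vector/covector identities, stronger than the scalar statements needed for (c).

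The crux is (c), and the plan there is to establish, on a neighbourhood of $(\s;\bhp)$, the bilinear error identity
\[
\bc^\top\bH(s;\bp)\bb-\bc^\top\bHr(s;\bp)\bb=\bigl(\bff(s;\bp)-\bW\bffr(s;\bp)\bigr)^{\!\top}\cA(s;\bp)\bigl(\bv(s;\bp)-\bV\bxr(s;\bp)\bigr).
\]
This is obtained by expanding the right-hand side into four terms and repeatedly substituting $\cA\bv=\bB\bb$, $\bff^\top\cA=\bc^\top\bC$, $\bW^\top\cA\bV=\cAr$, $\cAr\bxr=\bBr\bb$ and $\cAr^\top\bffr=\bCr^\top\bc$; each of the four terms reduces to $\pm\bc^\top\bH\bb$ or $\pm\bc^\top\bHr\bb$ and they telescope to the left-hand side. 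The identity is meaningful and differentiable near $(\s;\bhp)$ because $\cA(\s;\bhp)$ and $\cAr(\s;\bhp)$ are invertible (hence remain so nearby), $\cA$ is affine in $s$, and the data matrices are differentiable in $\bp$ (as is implicit for $\nabla_\bp$ to make sense); thus $\bv,\bxr,\bff,\bffr$ are differentiable there.

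To finish (c): when (a) and (b) hold simultaneously, the computations above show that the primal residual $\bv-\bV\bxr$ and the dual residual $\bff-\bW\bffr$ both vanish at $(\s;\bhp)$. Hence the scalar error on the left of the identity is, near $(\s;\bhp)$, a product of three differentiable factors, two of which vanish at $(\s;\bhp)$; differentiating by the product rule with respect to $s$, or with respect to any component $p_k$ of $\bp$, gives a sum each of whose terms still carries one of those two residuals evaluated at $(\s;\bhp)$, and so vanishes. This yields $\tfrac{\partial}{\partial s}\bigl(\bc^\top\bH(\s;\bhp)\bb\bigr)=\tfrac{\partial}{\partial s}\bigl(\bc^\top\bHr(\s;\bhp)\bb\bigr)$ and $\nabla_\bp\bigl(\bc^\top\bH(\s;\bhp)\bb\bigr)=\nabla_\bp\bigl(\bc^\top\bHr(\s;\bhp)\bb\bigr)$. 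The single genuinely computational step is the verification of the error identity; everything after it is the product rule and bookkeeping (and carrying one more order in the same expansion is what would later handle the parameter Hessian).
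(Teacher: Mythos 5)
Your proof is correct. Note that the paper itself does not prove this theorem; it is quoted from \cite{baur2011interpolatory}, so the relevant comparison is with that standard argument and with the machinery this paper uses for its own bilinear results. Your parts (a) and (b) are exactly that machinery: your projector $\bV\,\cAr(\s;\bhp)^{-1}\bW^\top\cA(\s;\bhp)$ is precisely the skew projector $\cP(s;\bp)$ appearing in the proof of Theorem \ref{thm:pbmor1sided}, and your transposed argument corresponds to its companion $\cQ(s;\bp)$; the checks $\bV\bxr=\cP\bv$ and $\bW\bffr=\bff$ under the respective range conditions are the standard Lagrange-interpolation step. For part (c) you take a somewhat different route than the paper takes when it proves the analogous derivative-matching statements for the bilinear subsystems: in Theorem \ref{thm:pbmor2sided} the authors expand the derivative of the reduced quantity explicitly and substitute, term by term, exact relations of the type $\bff_k=\bV\bffr_k$ and $\bg_k^\top=\bgr_k^\top(\cdots\otimes\bW^\top)$, whereas you factor the pointwise error as $\bigl(\bff-\bW\bffr\bigr)^\top\cA\bigl(\bv-\bV\bxr\bigr)$ (your four-term expansion collapsing to $\bc^\top\bH\bb-\bc^\top\bHr\bb$ is verified correctly) and then observe that both residual factors vanish at $(\s;\bhp)$, so every term generated by the product rule in $s$ or in any $p_j$ vanishes. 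Both arguments are valid; your factorization is the more economical one for the linear case and, as you note, carrying it one order further is exactly what underlies the second-order (Hessian) conditions that the cited source and Theorem \ref{thm:hess} of this paper obtain by augmenting the subspaces with parameter derivatives of the interpolation vectors.
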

Theorem \ref{thm:linear} shows  how to construct $\bV$ and $\bW$ to fulfill the required interpolation conditions. All one has to do is to compute the vectors, e.g.,  the vector $ \cA (\s; \bhp)^{-1} \bB (\bhp) \bb$, for the desired frequency interpolation points $\s$ and parameter interpolation point $\bhp$, and use these vectors as columns of $\bV$. We refer the reader to the original source \cite{baur2011interpolatory} for more details. The goal of this paper is to extend this result to parametric bilinear systems.

\subsection{Interpolatory parametric bilinear model reduction problem} \label{sec:intproblem}
Re-consider the full-order parametric bilinear system in \eqref{binon}:
	\begin{align} 
	\left\{ \begin{array}{l}
	\tag{\ref{binon}}
	\bE ( \bp ) \dot{ \bx } (t;\bp) 	 = \ds \bA (\bp) \bx (t;\bp) + \sum_{j=1}^m \bN_j (\bp) \bx (t) u_j (t) + \bB (\bp) \bu (t), \\[1ex]
	\by (t;\bp) 		 = \bC (\bp) \bx (t;\bp). 		
	\end{array} \right.
	\end{align}
Even though this system is nonlinear, due to the terms involving $\bN_j(\bp)$, the concept of transfer function can still be applied  via Volterra series representation \cite{rugh1981nonlinear}.  Given the bilinear system 
\eqref{binon}, we first introduce some notation to make the presentation of the Volterra series representation more compact:
	\begin{align} \label{eq:not1}
	\bN (\bp) & = [ \bN_1 (\bp) \ \bN_2 (\bp) \ \cdots \ \bN_m (\bp) ] ,				&
	\overline{ \bN } (\bp)  & = \left[ \begin{array}{c} \bN_1 (\bp) \\ \bN_2 (\bp) \\ \vdots \\ \bN_m (\bp)  \end{array} \right],   ~~\mbox{and}~~&
	\bI_m^{\otimes^k}  & = \underbrace{ \bI_m \otimes \cdots \otimes \bI_m }_{k \text{ times }},	\end{align}
where
$\otimes$ denotes the Kronecker product.

The output $\by(t;\bp)$ of \eqref{binon} can be represented as a Volterra series
 \begin{equation} 
\mathbf{y}(t;\bp)=\sum_{k=1}^{\infty}\int_0^{t_1}\int_0^{t_2}\cdots \int_0^{t_{k}}\mathbf{h}_k(t_1,t_2,\dots,t_k;\bp)\left(\mathbf{u}(t-\sum_{i=1}^k t_i)\otimes \cdots \otimes \mathbf{u}(t-t_k)\right) \mbox{d} t_k \cdots \mbox{d} t_1,
\end{equation}
where $\mathbf{h}_k(t_1,t_2,\dots,t_k;\bp)$'s are the regular Volterra kernels, also called subsystem kernels. 
Then, taking
the multivariable Laplace transform of the degree $k$ regular kernel $\mathbf{h}_k$
leads to the $k^{\rm th}$ subsystem transfer function: 
	\begin{align}
	\bH_k ( s_1, \dots, s_k; \bp ) = 
		 ~ \bC (\bp) \cA (s_k; \bp)^{-1} \times\bN (\bp) 
		[ \bI_m \otimes \cA (s_{k-1}; \bp)^{-1} \bN (\bp) ] 
		\cdots [ \bI_m^{\otimes^{k-2}} \otimes &\cA (s_2; \bp)^{-1} \bN (\bp) ] \label{eq:Hkfom}	\\
		& \times [ \bI_m^{\otimes^{k-1}} \otimes \cA (s_1; \bp)^{-1} \bB (\bp) ],	\nonumber
		\end{align}
		where $\cA (s; \bp)$ is as defined in \eqref{eq:cA}, and $\bN(\bp)$ and $\bI_m^{\otimes^{k}}$
		are as defined in \eqref{eq:not1}. 
For details of this analysis, we refer the reader to \cite{siu1991convergence,rugh1981nonlinear}.
The Volterra series representation of bilinear systems has been successfully used for interpolation-based input-independent, optimal model reduction of non-parametric bilinear systems; see, e.g., \cite{benner2012interpolation,flagg2015multipoint}.

Similarly, for the reduced bilinear system \eqref{brom}, define 
\begin{align}	\label{eq:not2}
	\bNr (\bp) & = [ \bNr_1 (\bp) \ \bNr_2 (\bp) \ \cdots \ \bNr_m (\bp) ].		\end{align}
Then, the $k^{\rm th}$ subsystem transfer function of the reduced model \eqref{brom} is given by
	\begin{align}  \label{eq:Hrfom}
	\bHr_k ( s_1, \dots, s_k; \bp ) = 
		 ~ \bCr (\bp) \cAr (s_k; \bp)^{-1} {\color{black}\times}\bNr (\bp) 
		[ \bI_m \otimes \cAr (s_{k-1}; \bp)^{-1} \bNr (\bp) ] 
		\cdots [ \bI_m^{\otimes^{k-2}} \otimes & \cAr (s_2; \bp)^{-1} \bNr (\bp) ] 	\\
		& \times [ \bI_m^{\otimes^{k-1}} \otimes \cAr (s_1; \bp)^{-1} \bBr (\bp) ]. \nonumber	
	\end{align}

This allows us to formulate the parametric interpolatory model reduction problem in our setting:
	Given interpolation frequencies $ \{ \s_1, \dots, \s_q \} \subset \C $, 
	nontrivial right direction $ \bb \in \C^m $, 
	nontrivial left direction $ \bc \in \C^\ell $, and 
	interpolation parameter sample $ \bhp \in \R^\nu $,
find  $ \bV, \bW $  such that the reduced model \eqref{brom} constructed via projection as in \eqref{eq:romss}
satisfies the following interpolation conditions
for any $ k \in \{ 1, \dots, q \}$:
	\begin{align}
	\bH_k ( \s_1, \dots, \s_k; \bhp ) ( \bI_m^{\otimes^{k-1}} \otimes \bb )	
		& = \bHr_k ( \s_1, \dots, \s_k; \bhp ) ( \bI_m^{\otimes^{k-1}} \otimes \bb ), 
		\label{eq:rint} \\
	\bc^\top \bH_k ( \s_1, \dots, \s_k; \bhp )	
		& = \bc^\top \bHr_k ( \s_1, \dots, \s_k; \bhp ), \label{eq:lint}  \\
	\frac{\partial}{\partial s_i} \left( \bc^\top \bH_k ( \s_1, \dots, \s_k; \bhp ) ( \bI_m^{\otimes^{k-1}} \otimes \bb ) \right)
		& = \frac{\partial}{\partial s_i} \left( \bc^\top \bHr_k ( \s_1, \dots, \s_k; \bhp ) ( \bI_m^{\otimes^{k-1}} \otimes \bb ) \right) ,
		& & i \in \{ 1, \dots, k \}  \label{eq:hermite} \\
	\cbJ_\bp \left( \bc^\top \bH_k ( \s_1, \dots, \s_k; \bhp ) ( \bI_m^{\otimes^{k-1}} \otimes \bb )	 \right)
		& = \cbJ_\bp \left(  \bc^\top \bHr_k ( \s_1, \dots, \s_k; \bhp ) ( \bI_m^{\otimes^{k-1}} \otimes \bb ) \right),
		\label{eq:gradient} \\
	\cbH_\bp \left( \bc^\top \bH_k ( \s_1, \dots, \s_k; \bhp ) ( \bI_m^{\otimes^{k-1}} \otimes \bb )\right)
		& = \cbH_\bp \left(\bc^\top \bHr_k ( \s_1, \dots, \s_k; \bhp ) ( \bI_m^{\otimes^{k-1}} \otimes \bb ) \right), 
		\label{eq:hessian} 
	\end{align}
where $\cbJ_{\bp}(\cdot)$ denotes the matrix of sensitivities (Jacobian) and $\cbH_{\bp}(\cdot)$ denotes the Hessian (tensor) with respect to $\bp$. 
In other words, we would like to construct a reduced parametric bilinear system whose leading subsystems interpolate (both in frequency and parameter space) the corresponding leading subsystems of 
	the full order parametric model. Note that we are not only enforcing Lagrange interpolation. We require the reduced model to match the parameter sensitivities and Hessians as well, which is important, especially, in the setting of reduced models in optimization. 
Moreover, these conditions can then be generalized for different reordering of the frequencies, multiple tangential directions, and several parameter values.

\subsection{Subspace conditions for parametric bilinear interpolation} \label{sec:conditions}
In this section, we establish the subspace conditions to enforce the desired interpolation conditions 
\eqref{eq:rint}--\eqref{eq:hessian} for parametric bilinear systems. 
Note that even for the parametric bilinear system \eqref{binon} we consider here, some of these interpolation conditions, e.g, \eqref{eq:rint}, do not involve parameter gradient and/or parameter Hessian interpolation; and  thus can be interpreted as regular tangential bilinear subsystem interpolation for a fixed parameter $\bp = \bhp$ as considered in \cite{benner2011generalised}. However, even though our subspace conditions for \eqref{eq:rint}-\eqref{eq:lint} will look similar to those in
\cite{benner2011generalised}, we include the corresponding theorem  (Theorem \ref{thm:pbmor1sided} below) and its complete proof for the following reasons. 
Although \cite{benner2011generalised} considers tangential interpolation for non-parametric bilinear systems, the tangential interpolation conditions appear differently. In our formulation, tangential directions appear in Kronecker product form due to the structure of $\bH_k (s_1,s_2,\ldots,s_k;\bp)$ as defined in 
\eqref{eq:Hkfom} and illustrates that $\bH_k (s_1,s_2,\ldots,s_k;\bp)$ can be considered to have $m^k$ inputs.  
Our conditions result in regular tangential interpolation in the subblocks of $\bH_k (s_1,s_2,\ldots,s_k;\bp)$; details will be given below. Moreover, we provide different proofs for \eqref{eq:rint} and \eqref{eq:lint}, which we later use in the proof of Theorem \ref{thm:pbmor2sided}. Finally, we include the $\bE(\bp)$ term in the full model. Clearly, the subspaces conditions \eqref{eq:gradient} for matching the parameter gradient and \eqref{eq:hessian} for matching the parameter Hessian  are new and will be fully discussed.

\begin{theorem} \label{thm:pbmor1sided}
Let $ q $ be the number of subsystems we wish to interpolate.
Consider $ \{ \s_1, \dots, \s_q \} \subset \C $ and $ \bhp \in \R^\nu $ such that $ \cA ( \s_i; \bhp ) $ is invertible for all $ i \in \{ 1, \dots, q \} $.
Consider also the nontrivial vectors $ \bb \in \C^m $ and $ \bc \in \C^\ell $.
Define
	\begin{align}
	\bV_1 		& = \cA ( \s_1; \bhp )^{-1} \bB (\bhp) \bb , &
	\bV_k 		& = \cA ( \s_k; \bhp )^{-1} \bN (\bhp) ( \bI_m \otimes \bV_{k-1} ) , 				&  \mbox{for}~~k = 2, \dots, q , \label{eq:defineV} \\
	\bW_1 	& = \left( \cA ( \s_q; \bhp ) \right)^{-\top} \bC (\bhp)^\top\bc,&
	\bW_k 	& = 
	 \left( \cA ( \s_{q+1-k}; \bhp ) \right)^{-\top} \overline{ \bN}(\bhp)^\top
	( \bI_m \otimes \bW_{k-1}), 	& \mbox{for}~~k = 2, \dots, q .  \label{eq:defineW} 
	\end{align}	
If 
	\begin{equation}  \label{eq:Vcond}
	\bigcup_{k=1}^q \bV_k \subseteq \textup{Ran} ( \bV ) ,
	\end{equation}
then, for $ k = 1, \dots, q$,
	\begin{equation}  \label{eq:Vint}
	\bH_k ( \s_1, \dots, \s_k; \bhp ) ( \bI_m^{\otimes^{k-1}} \otimes \bb ) = \bHr_k ( \s_1, \dots, \s_k; \bhp ) ( \bI_m^{\otimes^{k-1}} \otimes \bb ) .
	\end{equation}
If
	\begin{equation} \label{eq:Wcond}
	\bigcup_{k=1}^q \bW_k \subseteq \textup{Ran} ( \bW ) ,
	\end{equation}
then, for $ k =1, \dots, q$,
	\begin{equation} \label{eq:Wint}
	\bc^\top \bH_k ( \s_{q+1-k}, \dots, \s_q; \bhp ) = \bc^\top \bHr_k ( \s_{q+1-k}, \dots, \s_q; \bhp ) .
	\end{equation}
\end{theorem}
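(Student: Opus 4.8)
The statement contains two independent claims---\eqref{eq:Vcond}$\,\Rightarrow\,$\eqref{eq:Vint} and \eqref{eq:Wcond}$\,\Rightarrow\,$\eqref{eq:Wint}---and the plan is to prove each by an induction on the subsystem index $k$ that lifts the linear projection argument behind Theorem \ref{thm:linear} to the Volterra setting. The only algebraic facts I will use are the identities that fall out of \eqref{eq:romss} and \eqref{eq:cA}: for every $s\in\C$ and every $\bp$ one has $\bW^\top\cA(s;\bp)\bV=\cAr(s;\bp)$, $\bW^\top\bN(\bp)(\bI_m\otimes\bV)=\bNr(\bp)$, $\bW^\top\bB(\bp)=\bBr(\bp)$ and $\bC(\bp)\bV=\bCr(\bp)$, together with the transposed versions $\bV^\top\cA(s;\bp)^\top\bW=\cAr(s;\bp)^\top$, $\bV^\top\overline{\bN}(\bp)^\top(\bI_m\otimes\bW)=\overline{\bNr}(\bp)^\top$ and $\bV^\top\bC(\bp)^\top=\bCr(\bp)^\top$, where $\overline{\bNr}(\bp)$ stacks the $\bNr_j(\bp)$ exactly as in \eqref{eq:not1} and the $\bN$-identities use that block notation. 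I also assume (as Theorem \ref{thm:linear} does explicitly) that each $\cAr(\s_i;\bhp)$ is invertible, so that the reduced subsystem transfer functions are defined.

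\emph{The $\bV$ part.}
I would first observe that the vectors $\bV_k$ in \eqref{eq:defineV} are precisely the ``generalized states'' through which the full subsystem transfer function factors: peeling the Kronecker factors of \eqref{eq:Hkfom} from the innermost one outward---starting from $[\bI_m^{\otimes^{k-1}}\otimes\cA(\s_1;\bhp)^{-1}\bB(\bhp)](\bI_m^{\otimes^{k-1}}\otimes\bb)=\bI_m^{\otimes^{k-1}}\otimes\bV_1$ and then repeatedly using the mixed-product rule $(A\otimes B)(C\otimes D)=AC\otimes BD$---gives $\bH_k(\s_1,\dots,\s_k;\bhp)(\bI_m^{\otimes^{k-1}}\otimes\bb)=\bC(\bhp)\bV_k$, and the identical computation on \eqref{eq:Hrfom} gives $\bHr_k(\s_1,\dots,\s_k;\bhp)(\bI_m^{\otimes^{k-1}}\otimes\bb)=\bCr(\bhp)\widetilde{\bV}_k$, with $\widetilde{\bV}_1=\cAr(\s_1;\bhp)^{-1}\bBr(\bhp)\bb$ and $\widetilde{\bV}_k=\cAr(\s_k;\bhp)^{-1}\bNr(\bhp)(\bI_m\otimes\widetilde{\bV}_{k-1})$. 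Hence it is enough to show, under \eqref{eq:Vcond}, that $\bV\widetilde{\bV}_k=\bV_k$ for $k=1,\dots,q$, since then $\bCr(\bhp)\widetilde{\bV}_k=\bC(\bhp)\bV\widetilde{\bV}_k=\bC(\bhp)\bV_k$, which is \eqref{eq:Vint}. For $k=1$, condition \eqref{eq:Vcond} lets me write $\bV_1=\bV\mathbf{z}$; then $\cA(\s_1;\bhp)\bV\mathbf{z}=\bB(\bhp)\bb$, and left-multiplying by $\bW^\top$ yields $\cAr(\s_1;\bhp)\mathbf{z}=\bBr(\bhp)\bb$, i.e.\ $\mathbf{z}=\widetilde{\bV}_1$. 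For the inductive step, write $\bV_k=\bV\mathbf{Z}$ columnwise; then from $\cA(\s_k;\bhp)\bV\mathbf{Z}=\bN(\bhp)(\bI_m\otimes\bV_{k-1})=\bN(\bhp)(\bI_m\otimes\bV)(\bI_m\otimes\widetilde{\bV}_{k-1})$ (inductive hypothesis $\bV_{k-1}=\bV\widetilde{\bV}_{k-1}$), left-multiplying by $\bW^\top$ and invoking the structural identities gives $\cAr(\s_k;\bhp)\mathbf{Z}=\bNr(\bhp)(\bI_m\otimes\widetilde{\bV}_{k-1})$, so $\mathbf{Z}=\widetilde{\bV}_k$ and $\bV\widetilde{\bV}_k=\bV_k$.

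\emph{The $\bW$ part.}
This is the mirror image, complicated only by a reshaping forced by the asymmetry of $\bH_k$ (its ``inputs'' live in a Kronecker product, its ``output'' does not). Transposing \eqref{eq:Hkfom} at the frequency tuple $(\s_{q+1-k},\dots,\s_q)$ and peeling from the $\bC$ end, one is led to the identity $\big(\bc^\top\bH_k(\s_{q+1-k},\dots,\s_q;\bhp)\big)^\top=\big(\bI_m^{\otimes^{k-1}}\otimes\bB(\bhp)^\top\big)\,\mathrm{vec}(\bW_k)$, with $\bW_k$ the vectors from \eqref{eq:defineW} and $\mathrm{vec}$ the (iterated) column stacking; the ingredients are that $(\cA(s;\bp)^{-1}\bN(\bp))^\top=\bN(\bp)^\top\cA(s;\bp)^{-\top}$ reorganizes---via $\mathrm{vec}(AXB)=(B^\top\otimes A)\,\mathrm{vec}(X)$---into the action $\overline{\bN}(\bp)^\top(\bI_m\otimes\,\cdot\,)$ appearing in \eqref{eq:defineW}, while the innermost factor $[\bI_m^{\otimes^{k-1}}\otimes\cA(\s_{q+1-k};\bp)^{-1}\bB(\bp)]^\top$ supplies the trailing $\bB(\bhp)^\top$. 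The same identity holds with $\bH_k,\bB,\bW_k$ replaced by $\bHr_k,\bBr,\widetilde{\bW}_k$, where $\widetilde{\bW}_k$ is obtained from \eqref{eq:defineW} upon replacing $\cA,\overline{\bN},\bC$ by $\cAr,\overline{\bNr},\bCr$. Then, exactly as in the $\bV$ part but with the transposed structural identities---write $\bW_1=\bW\mathbf{z}$ and peel off one $\cA(\,\cdot\,;\bhp)^\top\bW$ at a time---I would prove by induction that $\bW\widetilde{\bW}_k=\bW_k$ under \eqref{eq:Wcond}. Combining $\mathrm{vec}(\bW_k)=(\bI_m^{\otimes^{k-1}}\otimes\bW)\,\mathrm{vec}(\widetilde{\bW}_k)$ with $\bBr(\bhp)^\top=\bB(\bhp)^\top\bW$ then gives $\big(\bc^\top\bH_k\big)^\top=\big(\bc^\top\bHr_k\big)^\top$ at those frequencies, which is \eqref{eq:Wint}.

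\emph{Main obstacle.}
The two inductions themselves are routine---each is the proof of Theorem \ref{thm:linear} with one extra $\bI_m\otimes(\,\cdot\,)$ carried through the recursion. The genuinely delicate step will be the reshaping in the $\bW$ part: I must verify that transposing $\bH_k$ with the frequency order $\s_{q+1-k},\dots,\s_q$ really does regenerate the recursion \eqref{eq:defineW}, which forces careful tracking of the interplay between $\bN(\bp)$ and $\overline{\bN}(\bp)$, the positions of the identity blocks $\bI_m^{\otimes^{j}}$, and the vec/Kronecker identities that translate between the ``row'' object $\bc^\top\bH_k$ and the ``matrix'' object $\bW_k$. That bookkeeping is also precisely where this Kronecker-structured tangential formulation departs from the one in \cite{benner2011generalised}, which is why a separate proof is worth recording.
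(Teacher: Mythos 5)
Your argument is correct and follows the same inductive skeleton as the paper's proof, but the implementation is different enough to be worth recording. The paper works with the skew projectors $\cP(s;\bp)=\bV\cAr(s;\bp)^{-1}\bW^\top\cA(s;\bp)$ and $\cQ(s;\bp)=\cA(s;\bp)\bV\cAr(s;\bp)^{-1}\bW^\top$, writes the level-$k$ interpolation error as $\bC(\bhp)(\bI_n-\cP(\s_k;\bhp))\bff_k$ for the right conditions and as $\bg_k^\top\bigl[\bI_m^{\otimes^{k-1}}\otimes(\bI_n-\cQ)\bB(\bhp)\bigr]$ for the left ones, and kills these terms via the range/kernel properties of the projectors; you instead prove, by multiplying through with $\bW^\top$ (resp.\ $\bV^\top$) and matching coefficients, the lifting identities $\bV\widetilde{\bV}_k=\bV_k$ and $\bW\widetilde{\bW}_k=\bW_k$, which are exactly the relations \eqref{VWint} that the paper only extracts later, inside the proof of Theorem \ref{thm:pbmor2sided}. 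Your route avoids the projector formalism, is legitimate as written (invertibility of $\cAr(\s_i;\bhp)$, which you correctly add as a hypothesis in the spirit of Theorem \ref{thm:linear}, forces $\bV$ and $\bW$ to have full column rank, so your coefficient $\mathbf{Z}$ is unambiguous), and has the side benefit of making the later sensitivity argument immediate; the paper's projector form is more compact and handles the left conditions without any transposition. One bookkeeping caveat in your $\bW$ part: for $k\ge 3$ and $m\ge 2$ the identity $\bigl(\bc^\top\bH_k(\s_{q+1-k},\dots,\s_q;\bhp)\bigr)^\top=\bigl(\bI_m^{\otimes^{k-1}}\otimes\bB(\bhp)^\top\bigr)\operatorname{vec}(\bW_k)$ holds only up to a fixed perfect-shuffle permutation of the $m^{k-1}$ blocks, because peeling the $\bN_j$ factors from the $\bC$ end orders the Kronecker indices oppositely to the column ordering generated by the recursion \eqref{eq:defineW}. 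This does not endanger the conclusion, since the identical, system-independent permutation appears in the reduced-order identity, so the comparison still yields \eqref{eq:Wint}; but it is precisely the reshaping you flag as delicate, and you should either insert that permutation explicitly or argue blockwise (columns of $\bW_k$ paired with the $1\times m$ blocks of $\bc^\top\bH_k$), which dispenses with $\operatorname{vec}$ altogether.
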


\begin{proof}
Define
	\begin{align} \label{eq:pqfg}
	\cP ( s; \bp ) & = \bV \cAr ( s; \bp )^{-1} \bW^\top \cA ( s; \bp ) , \nonumber\\
	\cQ ( s; \bp ) & = \cA ( s; \bp ) \bV \cAr ( s; \bp )^{-1} \bW^\top ,\nonumber\\
	\bff_k ( s_1, \dots, s_k; \bp ) 
		& = \cA (s_k; \bp)^{-1} \bN (\bp) ( \bI_m \otimes \cA (s_{k-1}; \bp)^{-1} \bN (\bp) ) \cdots 
		( \bI_m^{\otimes^{k-2} } \otimes \cA (s_2; \bp)^{-1} \bN (\bp) ) \\
		& \quad\quad~ \times( \bI_m^{\otimes^{k-1} } \otimes \cA (s_1; \bp)^{-1} \bB (\bp) \bb ),~\mbox{and}\nonumber\\
	\bg_k^\top ( s_1, \dots, s_k; \bp ) 
		& = \bc^\top \bC (\bp) \cA (s_k; \bp)^{-1} \bN (\bp) ( \bI_m \otimes \cA (s_{k-1}; \bp)^{-1} \bN (\bp) ) \cdots 
		( \bI_m^{\otimes^{k-2} } \otimes \cA (s_2; \bp)^{-1} \bN (\bp) ) \nonumber\\
		& \quad\quad\times ( \bI_m^{\otimes^{k-1} } \otimes \cA (s_1; \bp)^{-1} ) .\nonumber
	\end{align}
Note that $ \cP (s;\bp) $ is a skew projector onto $ \textup{Ran} (\bV) $ while $ \cQ (s;\bp) $ is a skew projector along $ \textup{Ker} (\bW^\top) $. 
Also note that 
	$ \bH_k (s_1,\ldots,s_k;\bp) ( \bI_m^{\otimes^{k-1}} \otimes \bb ) = \bC (\bp)\bff_k ( s_1, \dots, s_k; \bp ) $
and
	$ \bc^\top \bH_k (s_1,\ldots,s_k;\bp) = \bg_k^\top ( s_1, \dots, s_k; \bp ) ( \bI_m^{\otimes^{k-1}} \otimes \bB (\bp) ) $.

First we prove \eqref{eq:Vint}. Suppose \eqref{eq:Vcond} holds. 
We know that  \eqref{eq:Vint} is true for $ k = 1 $ by Theorem \ref{thm:linear}.
Assume that the result is true for $ k -1 $; recall $k<q$.
Then using the definitions of $\cP ( s; \bp )$ and $\bff_k ( s_1, \dots, s_k; \bp )$ from \eqref{eq:pqfg}, we obtain	\begin{equation} \label{intHk}
	\bH_k ( \s_1, \dots, \s_k; \bhp ) ( \bI_m^{\otimes^{k-1}} \otimes \bb ) - \bHr_k ( \s_1, \dots, \s_k; \bhp ) ( \bI_m^{\otimes^{k-1}} \otimes \bb )
	= \bC (\bhp) ( \bI_n - \cP ( \s_k; \bhp ) ) \bff_k ( \s_1, \dots, \s_k; \bhp) ,
	\end{equation}
where we factor out the term $ \bff_k ( \s_1, \dots, \s_k; \bhp) $ using the right interpolation of $ \bH_{k-1} (\s_1, \dots, \s_{k-1}; \bhp) $ due to the induction assumption.
Then, what is left to show is that \eqref{intHk} is zero:
By  the construction of $ \bV $ in \eqref{eq:Vcond}, we obtain $ \bff_k ( \s_1, \dots, \s_k; \bhp) \in \textup{Ran} ( \bV ) $.
Hence $ \bff_k ( \s_1, \dots, \s_k; \bhp) \in \textup{Ran} ( \cP ( \s_k; \bhp ) )$, 
 which implies
	\begin{equation}
	\label{intPf}
	( \bI_n - \cP ( \s_k; \bhp ) ) \bff_k ( \s_1, \dots, \s_k; \bhp) = 0,
	\end{equation}
since $ \cP (s;\bp) $ is a skew projector onto $ \textup{Ran} (\bV) $. The proof of 
\eqref{eq:Wint} follows similarly. Suppose \eqref{eq:Wcond} holds.
Once again, the result is true for $ k = 1 $.  
Assume that it holds for $ k-1$.
Similar to \eqref{intHk}, we obtain
	\begin{equation} \label{LintHk}
	\bc^\top \bH_k ( \s_{q+1-k}, \dots, \s_q; \bhp ) - \bc^\top \bHr_k ( \s_{q+1-k}, \dots, \s_q; \bhp ) 
	= \bg_k^\top ( \s_{q+1-k}, \dots, \s_q; \bhp) [ \bI_m^{\otimes^{k-1}} \otimes ( \bI_n - \cQ ( \s_q; \bhp ) ) \bB (\bhp) ].
	\end{equation}
Again we show that this expression is zero.
Note that by the definition of $ \bg_k$ and the construction of  $ \bW $ in \eqref{eq:Wcond},
 we have $ \bg_k ( \s_{q+1-k}, \dots, \s_q; \bhp) \perp \textup{Ker} ( \bI_m^{\otimes^{k-1}} \otimes \cQ ( \s_q; \bhp ) ) $; thus
	\begin{equation} 
	\label{intQg}
	\bg_k^\top ( \s_{q+1-k}, \dots, \s_q; \bhp) [ \bI_m^{\otimes^{k-1}} \otimes ( \bI_n - \cQ ( \s_q; \bhp ) ) ] = 0 .
	\end{equation}
	\hfill $\square$
\end{proof}
Theorem \ref{thm:pbmor1sided} provides tangential interpolation of $\bH_k(s_1,\ldots,s_k;\bp)$ in a specific order of the frequencies, namely in the order $\{\s_1,\ldots,\s_k\}$. However one might also consider enforcing interpolation  at the frequency samples $\{\s_1,\ldots,\s_k\}$ in any order, including repetitions, as is considered in \cite{benner2011generalised}. Indeed, as we will show in Theorem \ref{thm:pbmor2sided}, interpolation of the transfer function sensitivities will require this. The result is a direct extension of Theorem \ref{thm:pbmor1sided}; thus we skip the details. It simply requires the subspaces to contain all possible combinations:
\begin{corollary} \label{remek:order}
Let $ q $ be the number of subsystems we wish to interpolate.
Consider $ \{ \s_1, \dots, \s_q \} \subset \C $ and $ \bhp \in \R^\nu $ such that $ \cA ( \s_i; \bhp ) $ is invertible for all $ i \in \{ 1, \dots, q \} $.
Consider also the nontrivial vectors $ \bb \in \C^m $ and $ \bc \in \C^\ell $. Define
\begin{align} 
	\begin{aligned} 
	\label{hyp}
	\bV_1 	& = [ \cA ( \s_1; \bhp )^{-1} \bB (\bhp) \bb,  \ \ \cdots, \ \ \cA ( \s_q; \bhp )^{-1} \bB (\bhp) \bb ], \\
	\bV_k 	& = [ \cA ( \s_1; \bhp )^{-1} \bN (\bhp) ( \bI_m \otimes \bV_{k-1} ), \ \ \cdots, \ \ \cA ( \s_q; \bhp )^{-1} \bN (\bhp) ( \bI_m \otimes \bV_{k-1} ) ] , 				& k = 2, \dots, q , \\
	\bW_1 	& = [ \left( \cA ( \s_1; \bhp ) \right)^{-\top} \bC (\bhp)^\top\bc, \ \ \cdots, \ \ 
				\left( \cA ( \s_q; \bhp ) \right)^{-\top} \bC (\bhp)^\top\bc ] \\
	\bW_k 	& = [ \left( \cA ( \s_1; \bhp ) \right)^{-\top} \overline{ \bN}(\bhp)^\top ( \bI_m \otimes \bW_{k-1}), 
				\ \ \cdots, \ \ 
				\left( \cA ( \s_q; \bhp ) \right)^{-\top} \overline{ \bN}(\bhp)^\top ( \bI_m \otimes \bW_{k-1}) ] , 	
				& k = 2, \dots, q . 
	\end{aligned}
	\end{align}
If 
	\begin{equation}  \label{eq:Vcondall}
	\bigcup_{k=1}^q \bV_k \subseteq \textup{Ran} ( \bV ) ,
	\end{equation}
then, for $ k = 1, \dots, q $, and for any $ i_1, \dots, i_k \in \{ 1, \dots, q \} $,
	\[  
	\bH_k ( \s_{i_1}, \dots, \s_{i_k}; \bhp ) ( \bI_m^{\otimes^{k-1}} \otimes \bb ) 
	= \bHr_k ( \s_{i_1}, \dots, \s_{i_k}; \bhp ) ( \bI_m^{\otimes^{k-1}} \otimes \bb ).
	\]
If
	\begin{equation} \label{eq:Wcondall}
	\bigcup_{k=1}^q \bW_k \subseteq \textup{Ran} ( \bW ) ,
	\end{equation}
then, for $ k = 1, \dots, q $, and for any $ i_1, \dots, i_k \in \{ 1, \dots, q \} $,
	\[  
	\bc^T \bH_k ( \s_{i_1}, \dots, \s_{i_k}; \bhp ) 
	= \bc^T \bHr_k ( \s_{i_1}, \dots, \s_{i_k}; \bhp ).
	\]
	\end{corollary}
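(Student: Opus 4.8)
The plan is to reproduce the induction in the proof of Theorem~\ref{thm:pbmor1sided} almost verbatim; the one new feature is that the statement must be carried for \emph{every} ordering $(i_1,\dots,i_k)$ simultaneously rather than for the single chain $(1,\dots,k)$. As there, for a given tuple I would use the tails $\bff_k(\s_{i_1},\dots,\s_{i_k};\bhp)$ and $\bg_k^\top(\s_{i_1},\dots,\s_{i_k};\bhp)$ of \eqref{eq:pqfg}, together with the skew projectors $\cP(s;\bp)$ onto $\textup{Ran}(\bV)$ and $\cQ(s;\bp)$ along $\textup{Ker}(\bW^\top)$, so that $\bH_k(\s_{i_1},\dots,\s_{i_k};\bhp)(\bI_m^{\otimes^{k-1}}\otimes\bb)=\bC(\bhp)\bff_k(\s_{i_1},\dots,\s_{i_k};\bhp)$ and $\bc^\top\bH_k(\s_{i_1},\dots,\s_{i_k};\bhp)=\bg_k^\top(\s_{i_1},\dots,\s_{i_k};\bhp)(\bI_m^{\otimes^{k-1}}\otimes\bB(\bhp))$. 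The computation that makes everything go is the recursion
\[
\bff_k(\s_{i_1},\dots,\s_{i_k};\bhp)=\cA(\s_{i_k};\bhp)^{-1}\bN(\bhp)\bigl(\bI_m\otimes\bff_{k-1}(\s_{i_1},\dots,\s_{i_{k-1}};\bhp)\bigr),
\]
which follows from the mixed-product rule for the Kronecker product (with an analogous recursion for $\bg_k$), together with the structural fact --- immediate from this recursion and \eqref{hyp} by induction on $k$ --- that $\bff_k(\s_{i_1},\dots,\s_{i_k};\bhp)\in\textup{Ran}(\bV_k)$ and $\bg_k(\s_{i_1},\dots,\s_{i_k};\bhp)\in\textup{Ran}(\bW_k)$ for every admissible tuple.

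For $k=1$ and each $i_1$, the vector $\cA(\s_{i_1};\bhp)^{-1}\bB(\bhp)\bb$ is a column of $\bV_1$ in \eqref{hyp}, hence lies in $\textup{Ran}(\bV)$ by \eqref{eq:Vcondall}, and Theorem~\ref{thm:linear}(a), applied to the linear transfer function $\bH_1(s;\bp)=\bC(\bp)\cA(s;\bp)^{-1}\bB(\bp)$, gives $\bH_1(\s_{i_1};\bhp)\bb=\bHr_1(\s_{i_1};\bhp)\bb$; the $\bW$-side claim follows from Theorem~\ref{thm:linear}(b). For the inductive step, assume the right-interpolation identity for $k-1$ and all index tuples, and fix $(i_1,\dots,i_k)$. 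The induction hypothesis supplies the right interpolation of $\bH_{k-1}$ at the reordered tuple $(\s_{i_1},\dots,\s_{i_{k-1}})$ --- which is exactly what is needed to factor $\bff_k$ out of the difference, giving, as in \eqref{intHk},
\[
\bH_k(\s_{i_1},\dots,\s_{i_k};\bhp)(\bI_m^{\otimes^{k-1}}\otimes\bb)-\bHr_k(\s_{i_1},\dots,\s_{i_k};\bhp)(\bI_m^{\otimes^{k-1}}\otimes\bb)=\bC(\bhp)\bigl(\bI_n-\cP(\s_{i_k};\bhp)\bigr)\bff_k(\s_{i_1},\dots,\s_{i_k};\bhp).
\]
Since $\bff_k(\s_{i_1},\dots,\s_{i_k};\bhp)\in\textup{Ran}(\bV_k)\subseteq\textup{Ran}(\bV)$ by the structural fact and \eqref{eq:Vcondall}, and $\cP(s;\bp)$ is a skew projector onto $\textup{Ran}(\bV)$, the right-hand side vanishes. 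The left-interpolation statement is obtained by the same steps with $\bg_k$, $\cQ(s;\bp)$, $\bW_k$, \eqref{eq:Wcondall} and \eqref{LintHk}--\eqref{intQg} in the roles of $\bff_k$, $\cP(s;\bp)$, $\bV_k$, \eqref{eq:Vcondall} and \eqref{intHk}.

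The step I expect to take the most care is the structural claim $\bff_k(\s_{i_1},\dots,\s_{i_k};\bhp)\in\textup{Ran}(\bV_k)$ for \emph{every} tuple, including those with repetitions $i_j=i_{j'}$: one must check that the enlarged $\bV_k$ of \eqref{hyp} is generated from $\bV_{k-1}$ by appending, for each $i\in\{1,\dots,q\}$, the block $\cA(\s_i;\bhp)^{-1}\bN(\bhp)(\bI_m\otimes\bV_{k-1})$, so that the recursion above lands $\bff_k$ precisely in the block indexed by $i_k$; repetitions are harmless because the whole argument is pointwise in the frequency tuple. This is also the reason the single-chain construction \eqref{eq:defineV}--\eqref{eq:defineW} no longer suffices: here the factoring step invokes the induction hypothesis for \emph{all} length-$(k-1)$ reorderings, which forces the larger subspaces. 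Once this bookkeeping is in place, every remaining step is identical to the corresponding one in the proof of Theorem~\ref{thm:pbmor1sided}, so I would just remark that the details are unchanged.
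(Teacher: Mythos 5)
Your proposal is correct and is essentially the argument the paper intends: the paper skips the proof, calling the corollary a direct extension of Theorem \ref{thm:pbmor1sided} once the subspaces contain all frequency combinations, and your induction over arbitrary index tuples---using the recursion for $\bff_k$ to place $\bff_k(\s_{i_1},\dots,\s_{i_k};\bhp)$ in the block of the enlarged $\bV_k$ of \eqref{hyp} indexed by $i_k$---is exactly that extension. One small wording fix: since $\bg_k(\s_{i_1},\dots,\s_{i_k};\bhp)$ has length $m^{k-1}n$, the left-side structural fact should be stated in Kronecker-lifted form, namely $\bg_k \in \textup{Ran}(\bI_m^{\otimes^{k-1}}\otimes\bW)$ (equivalently $\bg_k^\top[\bI_m^{\otimes^{k-1}}\otimes(\bI_n-\cQ)]=0$ as in \eqref{intQg}), rather than $\bg_k\in\textup{Ran}(\bW_k)$.
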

So far, we proved the interpolation conditions using either only $\bV$ or only $\bW$; i.e., we assumed interpolation information only in one of the subspaces, considering one-sided projection. The next theorem shows that when both subspaces are considered one automatically matches the sensitivities (derivatives) with respect to the frequencies and  parameter, indeed without computing the sensitivities to be matched. 
\begin{theorem} \label{thm:pbmor2sided}
Assume the hypotheses of Corollary \ref{remek:order}.
 Let  $\bV_k$ and $\bW_k$ be constructed as in  \eqref{hyp} for $k=1,2,\ldots,q$.
If both
	\[
	\bigcup_{k=1}^q \bV_k \subseteq \textup{Ran} ( \bV ) 
	\qquad and \qquad
	\bigcup_{k=1}^q \bW_k \subseteq \textup{Ran} ( \bW ) ,
	\]
then for $ k = 1, \dots, q $ and  for $ i = 1, \dots, k $: 
	\[ 
	\frac{\partial}{\partial s_i} \left( \bc^\top \bH_k ( \s_1, \dots, \s_k; \bhp ) ( \bI_m^{\otimes^{k-1}} \otimes \bb ) \right)
	= \frac{\partial}{\partial s_i} \left( \bc^\top \bHr_k ( \s_1, \dots, \s_k; \bhp ) ( \bI_m^{\otimes{k-1}} \otimes \bb ) \right) , 
	\]
and 
	\[ 
	\cbJ_{\bp} \left(\bc^\top \bH_k ( \s_1, \dots, \s_k; \bhp ) ( \bI_m^{\otimes^{k-1}} \otimes \bb ) \right)
	= \cbJ_{\bp} \left(\bc^\top \bHr_k ( \s_1, \dots, \s_k; \bhp ) ( \bI_m^{\otimes^{k-1}} \otimes \bb ) \right).
	\]
	where $\cbJ_{\bp}(\cdot)$ denotes the matrix of sensitivities (Jacobian) with respect to $\bp$.
\end{theorem}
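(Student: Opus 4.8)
The plan is to route both derivative statements through two refined ``primitive-vector'' identities and then differentiate the Kronecker-structured transfer function directly. Keep the notation $\cP$, $\cQ$, $\bff_k$, $\bg_k$ of \eqref{eq:pqfg}, and let $\bffr_k$, $\bgr_k$ denote the analogues built from $\cAr$, $\bNr$, $\bBr$, $\bCr$. First I would establish, under the hypotheses of Corollary~\ref{remek:order},
\[
\bff_k(\s_{i_1},\dots,\s_{i_k};\bhp)=\bV\,\bffr_k(\s_{i_1},\dots,\s_{i_k};\bhp)
\quad\text{and}\quad
\bg_k^\top(\s_{i_1},\dots,\s_{i_k};\bhp)=\bgr_k^\top(\s_{i_1},\dots,\s_{i_k};\bhp)\,(\bI_m^{\otimes^{k-1}}\otimes\bW^\top)
\]
for every choice of $i_1,\dots,i_k\in\{1,\dots,q\}$. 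Each follows by induction on $k$: stage by stage, $\bV\cAr^{-1}\bW^\top=\cP\cA^{-1}=\cA^{-1}\cQ$ lets the reduced recursion telescope into $\cP(\s_{i_k};\bhp)$ applied to $\bff_k$ (resp.\ into $\bg_k^\top$ post-multiplied by $\bI_n-\cQ(\s_{i_1};\bhp)$), and the corresponding vector is fixed by $\cP$ (resp.\ annihilated) because it already lies in $\textup{Ran}(\bV)$ (resp.\ the $\bW$-subspace condition holds) --- exactly the computation in \eqref{intPf}--\eqref{intQg}. Corollary~\ref{remek:order}, rather than Theorem~\ref{thm:pbmor1sided}, is needed precisely so that these hold for \emph{every} ordering of the $\s_j$.

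For the frequency sensitivities, note that $s_i$ appears in exactly one resolvent factor of $\bc^\top\bH_k(\s_1,\dots,\s_k;\bhp)(\bI_m^{\otimes^{k-1}}\otimes\bb)$ and that $\frac{\partial}{\partial s}\cA(s;\bp)^{-1}=-\cA(s;\bp)^{-1}\bE(\bp)\cA(s;\bp)^{-1}$; differentiating and splitting that factor with the mixed-product rule for $\otimes$ gives a single term,
\[
\frac{\partial}{\partial s_i}\!\left(\bc^\top\bH_k(\s_1,\dots,\s_k;\bhp)(\bI_m^{\otimes^{k-1}}\otimes\bb)\right)
=-\,\bg_{k-i+1}^\top(\s_i,\dots,\s_k;\bhp)\,\bigl(\bI_m^{\otimes^{k-i}}\otimes\bE(\bhp)\,\bff_i(\s_1,\dots,\s_i;\bhp)\bigr),
\]
with the identical formula for the reduced model in terms of $\bgr_{k-i+1}$, $\bffr_i$, $\bEr$. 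Substituting the two identities of the previous paragraph together with $\bEr(\bhp)=\bW^\top\bE(\bhp)\bV$ and applying the mixed-product rule once more collapses the reduced expression onto the full one, which is the first claim.

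For the parameter Jacobian, fix a component $p_\mu$ of $\bp$; now every factor --- $\bC(\bp)$, each $\cA(\s_\ell;\bp)$, each copy of $\bN(\bp)$, and $\bB(\bp)$ --- carries $\bp$-dependence, so $\partial/\partial p_\mu$ yields a finite sum, one term per differentiated factor. I would treat the four cases in turn, each time sandwiching the differentiated factor between an $\bff$-type object lying in $\textup{Ran}(\bV)$ on the right and a $\bg$-type object on the left: (i) when $\bC$ is hit, use $\bff_k=\bV\bffr_k$ and $\partial_{p_\mu}\bCr=(\partial_{p_\mu}\bC)\bV$; (ii) when $\bB$ is hit, use the $\bg$-identity and $\partial_{p_\mu}\bBr=\bW^\top\partial_{p_\mu}\bB$; (iii) when a resolvent $\cA(\s_\ell;\bp)$ is hit, $\partial_{p_\mu}\cA^{-1}=-\cA^{-1}(\partial_{p_\mu}\cA)\cA^{-1}$ splits stage $\ell$ into the left factor $\bg_{k-\ell+1}^\top(\s_\ell,\dots,\s_k;\bhp)$ and the right factor $\bI_m^{\otimes^{k-\ell}}\otimes\bff_\ell(\s_1,\dots,\s_\ell;\bhp)$, so use the two identities and $\partial_{p_\mu}\cAr=\bW^\top(\partial_{p_\mu}\cA)\bV$; (iv) when a copy of $\bN(\bp)$ at stage $\ell$ is hit, the left factor is again $\bg_{k-\ell+1}^\top(\s_\ell,\dots,\s_k;\bhp)$, the right factor is $\bI_m^{\otimes^{k-\ell+1}}\otimes\bff_{\ell-1}(\s_1,\dots,\s_{\ell-1};\bhp)$, and $\partial_{p_\mu}\bNr=\bW^\top(\partial_{p_\mu}\bN)(\bI_m\otimes\bV)$ closes it. Because the reduced matrices in \eqref{eq:romss} are exactly the $\bW^\top(\cdot)\bV$ compressions, each reduced summand equals the corresponding full one; summing over the differentiated factor and assembling the $p_\mu$ into $\cbJ_{\bp}$ gives the result. (The frequency lists arising in the $\bff$- and $\bg$-blocks are sublists of the distinct samples $\s_1,\dots,\s_k$, so the identities of the first paragraph apply throughout.)

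The main obstacle is the Kronecker bookkeeping: one has to track the nested $\bI_m^{\otimes^{\cdot}}$ factors precisely enough to recognize each ``left block'' as a genuine $\bg$-object and each ``right block'' as a genuine $\bff$-object sitting in the correctly padded image of $\bW$ resp.\ $\bV$, and then verify the cancellations termwise. This is also where Corollary~\ref{remek:order} is indispensable: differentiating with respect to $s_i$ --- or with respect to $p_\mu$ inside a resolvent --- effectively promotes $\s_i$ to the top of a $\bg$-block and the bottom of an $\bff$-block, so interpolation in the single fixed frequency order of Theorem~\ref{thm:pbmor1sided} would not suffice.
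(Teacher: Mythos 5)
Your proposal follows essentially the same route as the paper's proof: you first establish the vector-level identities $\bff_k = \bV\bffr_k$ and $\bg_k^\top = \bgr_k^\top(\bI_m^{\otimes^{k-1}}\otimes\bW^\top)$ for all frequency orderings via the projectors $\cP$ and $\cQ$ (these are exactly the paper's \eqref{VWint}, obtained from Corollary \ref{remek:order}), and then differentiate term by term in $s_i$ and in each parameter component, using $\bEr=\bW^\top\bE\bV$ and the compressed derivatives $\bCr_{p_j}=\bC_{p_j}\bV$, $\cAr_{p_j}=\bW^\top\cA_{p_j}\bV$, $\bNr_{p_j}=\bW^\top\bN_{p_j}(\bI_m\otimes\bV)$, $\bBr_{p_j}=\bW^\top\bB_{p_j}$ to collapse each reduced summand onto the corresponding full one, just as the paper does in \eqref{eq:l1}--\eqref{eq:l4}. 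Apart from minor Kronecker-padding bookkeeping (which the paper itself treats loosely), the argument is the same.
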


\begin{proof}
Recall the definitions $ \cP (s;\bp) $, $ \cQ (s;\bp) $, $ \bff_k (s_1,\dots,s_k;\bp) $, and $ \bg_k^\top (s_1,\dots,s_k;\bp) $ from \eqref{eq:pqfg}. Similarly,
define
	\begin{align*}
	\bffr_k ( s_1, \dots, s_k; \bp ) 
		& = \cAr (s_k; \bp)^{-1} \bNr (\bp) ( \bI_m \otimes \cAr (s_{k-1}; \bp)^{-1} \bNr (\bp) ) \cdots 
		( \bI_m^{\otimes^{k-2} } \otimes \cAr (s_2; \bp)^{-1} \bNr (\bp) ) \\
		& \quad\quad~ \times( \bI_m^{\otimes^{k-1} } \otimes \cAr (s_1; \bp)^{-1} \bBr (\bp) \bb ) ,~\mbox{and}\\
	\bgr_k^\top ( s_1, \dots, s_k; \bp ) 
		& = \bc^\top \bCr (\bp) \cAr (s_k; \bp)^{-1} \bNr (\bp) ( \bI_m \otimes \cAr (s_{k-1}; \bp)^{-1} \bNr (\bp) ) \cdots 
		( \bI_m^{\otimes^{k-2} } \otimes \cAr (s_2; \bp)^{-1} \bNr (\bp) ) \\
		& \quad\quad~\times ( \bI_m^{\otimes^{k-1} } \otimes \cAr (s_1; \bp)^{-1} ) .
	\end{align*}
Let $ k \in \{ 1, \dots, q \} $ and
 $ i \in \{ 1, \dots, k \} $.
Under the assumptions of the theorem, we know \eqref{intPf} and \eqref{intQg} are satisfied for any choice of frequencies due to Corollary \ref{remek:order}; in particular,
	\begin{align*}
	( \bI_n - \cP ( \s_\kappa; \bhp ) ) \bff_\kappa ( \s_1, \dots, \s_\kappa; \bhp) & = 0 , 	~~~\mbox{and}~~~	&
	\bg_{\kappa-\iota+1}^\top ( \s_\iota, \dots, \s_\kappa; \bhp) [ \bI_m^{\otimes^{\kappa-\iota}} \otimes ( \bI_n - \cQ ( \s_\kappa; \bhp ) ) ] & = 0 ,
	\end{align*}
for any $ \kappa \in \{ 1, \dots, q \} $ and $ \iota < \kappa $.
Recalling the definitions of the full and reduced transfer functions \eqref{eq:Hkfom} and \eqref{eq:Hrfom}, together with \eqref{intHk} and \eqref{LintHk}, and our definitions in this proof, we can rewrite these two terms as
	\begin{align*}
	\bff_\kappa ( \s_1, \dots, \s_\kappa; \bhp) - \bV \bffr_\kappa ( \s_1, \dots, \s_\kappa; \bhp) & = 0 , 		&
	\bg_{\kappa-\iota+1}^\top ( \s_\iota, \dots, \s_\kappa; \bhp) - \bgr_{\kappa-\iota+1}^\top ( \s_\iota, \dots, \s_\kappa; \bhp) 
		( {\color{black}\bI_m^{\otimes^{\kappa-\iota}} \otimes }\bW^\top ) & = 0,
	\end{align*} 
or equivalently
	\begin{equation} 
	\label{VWint}
	\bff_\kappa ( \s_1, \dots, \s_\kappa; \bhp)  = \bV \bffr_\kappa ( \s_1, \dots, \s_\kappa; \bhp) ~~\mbox{and}~~
	\bg_{\kappa-\iota+1}^\top ( \s_\iota, \dots, \s_\kappa; \bhp) 
	 = \bgr_{\kappa-\iota+1}^\top ( \s_\iota, \dots, \s_\kappa; \bhp) ( \bI_m^{\otimes^{\kappa-\iota}} \otimes \bW^\top ) .
	\end{equation}
Now fix $ k \in \{ 1, \dots, q \} $ and $ i \in \{ 1, \dots, k \} $, and recall that $ \bEr (\bp) = \bW^\top \bE (\bp) \bV $.
If $ i \ne 1 $, then
	\begin{align*}
	\frac{\partial}{\partial s_i} \left( \bc^\top \bHr_k ( \s_1, \dots, \s_k; \bhp ) ( \bI_m^{\otimes^{k-1}} \otimes \bb ) \right)
	& = \bgr^\top_{k-i+1} (\s_i, \dots, \s_k;\bhp) ( \bI_m^{\otimes^{k-i}} \otimes \bW^\top ) \bE (\bhp) 
			\bV \bffr_i (\s_1,\dots,\s_i;\bhp) \\
	& = \bg^\top_{k-i+1} (\s_i, \dots, \s_k;\bhp) \bE (\bhp) \bff_i (\s_1,\dots,\s_i;\bhp) 	& (\textup{by } \eqref{VWint})\\
	& = \frac{\partial}{\partial s_i} \left( \bc^\top \bH_k ( \s_1, \dots, \s_k; \bhp ) ( \bI_m^{\otimes^{k-1}} \otimes \bb ) \right) .
	\end{align*} 
If $ i = 1 $, then
	\begin{align*}
	\frac{\partial}{\partial s_i} \bc^\top \bHr_k ( \s_1, \dots, \s_k; \bhp ) ( \bI_m^{\otimes^{k-1}} \otimes \bb )
	& = \bgr^\top_k (\s_1, \dots, \s_k;\bhp) ( \bI_m^{\otimes^{k-1}} \otimes \bW^\top ) \bE (\bhp) \bV \bffr_1 (\s_1;\bhp) \\
	& = \bg^\top_k (\s_1, \dots, \s_k;\bhp) \bE (\bhp) \bff_1 (\s_1;\bhp) 	& (\textup{by } \eqref{VWint}) \\
	& = \frac{\partial}{\partial s_i} \bc^\top \bH_k ( \s_1, \dots, \s_k; \bhp ) ( \bI_m^{\otimes^{k-1}} \otimes \bb ).
	\end{align*}

Similarly, we can justify interpolation of the parameter gradients. 
Since the expression of the parameter gradient for a general subsystem transfer function $ \bH_k(s_1,\ldots,s_k;\bp)$ becomes too involved to properly present in a single page, we provide the proof for the second subsystem (the result for the first subsystem follows Theorem \ref{thm:linear}) and sketch the proof for a general subsystem.
Let $ p_j $ refer to any entry of the parameter vector $ \bp \in \R^\nu$.
Consider
\begin{equation} \label{eq:hh}
\frac{\partial}{\partial p_j} \left( \bc^\top \bH_2 (\s_1,\s_2;\bhp) ( \bI_m \otimes \bb ) -  \bc^\top \bHr_2 (\s_1,\s_2;\bhp) ( \bI_m \otimes \bb ) \right).
\end{equation}
Let $\mathbf{M}_{p_j}(\bp)$ denote the partial derivative of $\mathbf{M}(\bp)$ with respect to $p_j$. Then, 
by taking the partial derivatives in \eqref{eq:hh}, using interpolation of the first subsystem, rearranging terms and using 
	$ \bCr_{p_j} (\bp) = \bC_{p_j} (\bp) \bV $, $ \cAr_{p_j} (s;\bp) = \bW^\top \cA_{p_j} (s;\bp) \bV $, $ \bNr_{p_j} (\bp) = \bW^\top \bN_{p_j} (\bp) ( \bI_m \otimes \bV ) $, and $ \bBr_{p_j} (\bp) = \bW^\top \bB_{p_j} (\bp) $,
one can write
	\begin{align*}
	& \frac{\partial}{\partial p_j} \left( \bc^\top \bH_2 (\s_1,\s_2;\bhp) ( \bI_m \otimes \bb ) -  \bc^\top \bHr_2 (\s_1,\s_2;\bhp) ( \bI_m \otimes \bb ) \right) \\
	& \qquad = ( \bc^\top \bC_{p_j} (\bhp) - \bg_1 ( \s_2; \bhp) \cA_{p_j} ( \s_2; \bhp ) ) ( \bI - \cP ( \s_2; \bhp ) ) \bff_2 ( \s_1, \s_2; \bhp) \\
	& \qquad\qquad + \bg_2^\top ( \s_1, \s_2; \bhp ) \left( \bI_m \otimes ( \bI - \cQ ( \s_1; \bhp) ) ( \bB_{p_j} (\bhp) \bb - \cA_{p_j} ( \s_1; \bhp ) \bff_1 ) \right) ,
	\end{align*}
which can be justified by multiplying out the right-hand side and re-grouping. 
We know that in the second-line of this expression, we obtain
	 $( \bI - \cP ( \s_2; \bhp ) ) \bff_2 ( \s_1, \s_2; \bhp) =0 $ 
and in the third-line of this expression, we obtain
	$\bg_2^\top ( \s_1, \s_2; \bhp ) \left( \bI_m \otimes ( \bI - \cQ ( \s_1; \bhp) ) \right) = 0 $ 
 using $ k = 2 $ in the proof of Theorem \ref{thm:pbmor1sided}. Thus,
 we have
	 $$ 
	\frac{\partial}{\partial p_j} \left( \bc^\top \bH_2 (\s_1,\s_2;\bhp) ( \bI_m \otimes \bb )\right) =\frac{\partial}{\partial p_j} \left(   \bc^\top \bHr_2 (\s_1,\s_2;\bhp) ( \bI_m \otimes \bb ) \right).
	$$
Since $p_j$ was an arbitrary entry of $\bp$, this yields 
$\cbJ_{\bp} \left(\bc^\top \bH_2 ( \s_1, \s_2; \bhp ) ( \bI_m  \otimes \bb ) \right) 
	= \cbJ_{\bp} \left( \bc^\top \bHr_2 ( \s_1, \s_2; \bhp ) ( \bI_m \otimes \bb ) \right)$ as desired. 
Now, for the general case, consider 
	\begin{align} \label{eq:l1}
\frac{\partial}{\partial p_j}\left( \bc^\top \bHr_k ( \s_1, \dots, \s_k; \bhp ) ( \bI_m^{\otimes^{k-1}} \otimes \bb ) \right)	& = \bc^\top \bC_{p_j} (\bhp) \bV \bffr_k (\s_1, \dots, \s_k;\bhp) \\
		& \qquad + \bgr^\top_1 (\s_k;\bhp) \bW^\top \cA_{p_j} (\s_k;\bhp) \bV \bffr_k (\s_1, \dots, \s_k;\bhp)\label{eq:l2} \\
		& \qquad + \bgr^\top_1 (\s_k;\bhp) \bW^\top \bN_{p_j} (\bhp) ( \bI_m \otimes \bV ) \bffr_{k-1} (\s_1, \dots, \s_{k-1};\bhp) \label{eq:l3}\\
		& \qquad + \dots  + \bgr^\top_k (\s_1, \dots, \s_k; \bhp) ( \bI_m^{\otimes^{k-1}} \otimes \bW^\top \bB_{p_j} (\bhp) \bb ). \label{eq:l4}	\end{align}
Consider the right-hand side of \eqref{eq:l1}. Using  \eqref{VWint}, one can replace
$\bV \bffr_k (\s_1, \dots, \s_k;\bhp)$ with $\bff_k (\s_1, \dots, \s_k;\bhp)$. Similarly, in \eqref{eq:l2}, 
once again using \eqref{VWint}, one replaces $ \bgr^\top_1 (\s_k;\bhp) \bW^\top$ with $\bg^\top_1 (\s_k;\bhp)$. Continuing in this fashion, we obtain
	\begin{align*}
\frac{\partial}{\partial p_j}\left( \bc^\top \bHr_k ( \s_1, \dots, \s_k; \bhp ) ( \bI_m^{\otimes^{k-1}} \otimes \bb ) \right)
	& = \bc^\top \bC_{p_j} (\bhp) \bff_k (\s_1, \dots, \s_k;\bhp) \\
		& \qquad + \bg^\top_1 (\s_k;\bhp) \cA_{p_j} (\s_k;\bhp) \bff_k (\s_1, \dots, \s_k;\bhp) \\
		& \qquad + \bg^\top_1 (\s_k;\bhp) \bN_{p_j} (\bhp) \bff_{k-1} (\s_1, \dots, \s_{k-1};\bhp) \\
		& \qquad + \dots + \bg^\top_k (\s_1, \dots, \s_k; \bhp) ( \bI_m^{\otimes^{k-1}} \otimes \bB_{p_j} (\bhp) \bb ) \\
	& = \frac{\partial}{\partial p_j}\left(  \bc^\top \bH_k ( \s_1, \dots, \s_k; \bhp ) ( \bI_m^{\otimes^{k-1}} \otimes \bb ) \right) 
	\end{align*} 
	as desired.  \hfill $\square$

\end{proof}

We now present the final theoretical result,  showing the interpolation of the parameter Hessian.
As the expressions become too involved for a general subsystem transfer function, we write and proof the conditions for the first and second subsystems only, but the results can  
be generalized similarly.


\begin{theorem} \label{thm:hess}
Assume the hypotheses of Corollary \ref{remek:order} for $ q = 2 $.
Define
	\begin{align*}
	\bV_1 (\bp) 	& = \left[ \cA ( \s_1; \bp )^{-1} \bB (\bp) \bb, ~~~ \cA ( \s_2; \bp )^{-1} \bB (\bp) \bb \right], \\
	\bV_2 (\bp)	& = \left[ \cA ( \s_1; \bp )^{-1} \bN (\bp) ( \bI_m \otimes \bV_1 (\bp) ), ~~~ \cA ( \s_2; \bp )^{-1} \bN (\bp) ( \bI_m \otimes \bV_1 (\bp) ) \right] , 		\\
	\bW_1 (\bp) & = \left[ \left( \cA ( \s_1; \bp ) \right)^{-\top} \bC (\bp)^\top \bc, ~~~ \left( \cA ( \s_2; \bp ) \right)^{-\top} \bC (\bp)^\top \bc \right], \\
	\bW_2 (\bp)	& = \left[ \left( \cA ( \s_1; \bp ) \right)^{-\top} \overline{ \bN} (\bp)^\top ( \bI_m \otimes \bW_1 (\bp) ),  ~~~ 
				\left( \cA ( \s_2; \bp ) \right)^{-\top} \overline{ \bN} (\bp)^\top ( \bI_m \otimes \bW_1 (\bp) ) \right] .
	\end{align*}
Assume
	\begin{equation} \label{Vh1}
	\bigcup_{k=1}^2 \bV_k (\bhp) \subseteq \textup{Ran} ( \bV )  ,
	\quad and \quad
	\bigcup_{k=1}^2 \bW_k (\bhp) \subseteq \textup{Ran} ( \bW ) .
	\end{equation}
If either
	\begin{equation}\label{Vh2}
	\bigcup_{k=1}^2 \bigcup_{j=1}^\nu \frac{\partial}{\partial p_j} \bV_k (\bhp) \subseteq \textup{Ran} ( \bV ) 
	\quad \mbox{or} \quad
	\bigcup_{k=1}^2 \bigcup_{j=1}^\nu \frac{\partial}{\partial p_j} \bW_k (\bhp) \subseteq \textup{Ran} ( \bW ) ,
	\end{equation}
then
	\[ 
	\cbH_{\bp} \left( \bc^\top \bH_2 ( \s_1, \s_2; \bhp ) ( \bI_m \otimes \bb ) \right) = \cbH_{\bp} \left( \bc^\top \bHr_2 ( \s_1, \s_2; \bhp ) ( \bI_m \otimes \bb ) \right).
	\]
where $\cbH_{\bp}(\cdot)$ denotes the Hessian with respect to $\bp$.

\end{theorem}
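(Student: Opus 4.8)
The plan is to reduce the Hessian identity to ordinary scalar second partial derivatives and then run it through the projector calculus already developed in the proofs of Theorems~\ref{thm:pbmor1sided} and~\ref{thm:pbmor2sided}. Since $\cbH_{\bp}$ acts entrywise, it suffices to prove, for every pair of components $p_i,p_j$ of $\bp$ (and, by smoothness, with $i\le j$), that
\[
\frac{\partial^2}{\partial p_i\,\partial p_j}\Big(\bc^\top\bH_2(\s_1,\s_2;\bhp)(\bI_m\otimes\bb)-\bc^\top\bHr_2(\s_1,\s_2;\bhp)(\bI_m\otimes\bb)\Big)=0 .
\]
The first subsystem is covered by the Hessian statement of Theorem~\ref{thm:linear}. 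I would assume the first alternative in \eqref{Vh2}, namely that $\textup{Ran}(\bV)$ also contains the columns of $\partial_{p_j}\bV_1(\bhp)$ and $\partial_{p_j}\bV_2(\bhp)$ for all $j$; the other alternative follows from the mirror argument with $(\bV,\bff_k,\cP)$ and $(\bW,\bg_k,\cQ)$ exchanged.

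First I would assemble the first-order interpolation data at $\bhp$. Because the reduced quantities are Petrov-Galerkin projections, the identities $\bCr_{p_j}=\bC_{p_j}\bV$, $\cAr_{p_j}=\bW^\top\cA_{p_j}\bV$, $\bNr_{p_j}=\bW^\top\bN_{p_j}(\bI_m\otimes\bV)$, $\bBr_{p_j}=\bW^\top\bB_{p_j}$ (already used in the proof of Theorem~\ref{thm:pbmor2sided}) hold for all $\bp$. Differentiating the defining relations $\cA(\s_1;\bp)\bff_1=\bB(\bp)\bb$ and $\cA(\s_2;\bp)\bff_2=\bN(\bp)(\bI_m\otimes\bff_1)$ together with their reduced analogues, and using these identities, an induction on $k\in\{1,2\}$ gives $\bV\bffr_k(\bhp)=\bff_k(\bhp)$ and $\bV\,\partial_{p_j}\bffr_k(\bhp)=\partial_{p_j}\bff_k(\bhp)$ for every ordering of $\{\s_1,\s_2\}$: each equality reduces to ``$\bV(\text{reduced vector})=\cP(\s_k;\bhp)(\text{full vector})=(\text{full vector})$'' since the full vector is a column of $\bV_k(\bhp)$ or of $\partial_{p_j}\bV_k(\bhp)$, hence lies in $\textup{Ran}(\bV)=\textup{Ran}(\cP(\s_k;\bhp))$ by \eqref{Vh1}--\eqref{Vh2}. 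Writing $\be_k(\bp):=\bff_k(\s_1,\s_2;\bp)-\bV\bffr_k(\s_1,\s_2;\bp)$, this gives $\be_k(\bhp)=0$ and $\partial_{p_j}\be_k(\bhp)=0$. Differentiating $\cA(\s_1;\bp)\bff_1=\bB(\bp)\bb$ a second time and repeating the computation in the reduced variables then yields the structural identity
\[
\frac{\partial^2}{\partial p_i\,\partial p_j}\be_1(\bhp)=\big(\bI_n-\cP(\s_1;\bhp)\big)\,\cA(\s_1;\bhp)^{-1}\boldsymbol{\xi}\ \in\ \textup{Ran}\big(\bI_n-\cP(\s_1;\bhp)\big),
\]
where $\boldsymbol{\xi}:=\bB_{p_ip_j}(\bhp)\bb-\cA_{p_i}(\s_1;\bhp)\,\partial_{p_j}\bff_1(\bhp)-\cA_{p_ip_j}(\s_1;\bhp)\,\bff_1(\bhp)-\cA_{p_j}(\s_1;\bhp)\,\partial_{p_i}\bff_1(\bhp)$. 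Finally, from the $\bW_1(\bhp),\bW_2(\bhp)$ parts of \eqref{Vh1} and the projector identities (this is the left interpolation content of Theorem~\ref{thm:pbmor1sided} for $q=2$; cf.~\eqref{intQg}) I record the two annihilation identities
\[
\bc^\top\bC(\bhp)\big(\bI_n-\cP(\s_2;\bhp)\big)=0,\qquad
\bc^\top\bC(\bhp)\,\cA(\s_2;\bhp)^{-1}\bN(\bhp)\big(\bI_m\otimes(\bI_n-\cP(\s_1;\bhp))\big)=0 .
\]

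Now, since $\bCr=\bC\bV$ and $\bHr_k(s_1,\dots,s_k;\bp)(\bI_m^{\otimes^{k-1}}\otimes\bb)=\bCr(\bp)\bffr_k(s_1,\dots,s_k;\bp)$, the error factors as $\bc^\top\bH_2(\s_1,\s_2;\bp)(\bI_m\otimes\bb)-\bc^\top\bHr_2(\s_1,\s_2;\bp)(\bI_m\otimes\bb)=\bc^\top\bC(\bp)\,\be_2(\bp)$, and because $\be_2(\bhp)=\partial_{p_j}\be_2(\bhp)=0$ the product rule collapses the second partial of the error at $\bhp$ to $\bc^\top\bC(\bhp)\,\tfrac{\partial^2}{\partial p_i\,\partial p_j}\be_2(\bhp)$. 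Differentiating $\bffr_2=\cAr(\s_2;\bp)^{-1}\bNr(\bp)(\bI_m\otimes\bffr_1)$ twice, applying the Petrov-Galerkin identities, substituting the first-order data $\bV\bffr_1(\bhp)=\bff_1(\bhp)$, $\bV\partial_{p_j}\bffr_1(\bhp)=\partial_{p_j}\bff_1(\bhp)$, $\bV\bffr_2(\bhp)=\bff_2(\bhp)$, $\bV\partial_{p_j}\bffr_2(\bhp)=\partial_{p_j}\bff_2(\bhp)$ and the $k=1$ structural identity, and comparing with the corresponding expansion of $\tfrac{\partial^2}{\partial p_i\,\partial p_j}\bff_2(\bhp)$, one obtains a split
\[
\frac{\partial^2}{\partial p_i\,\partial p_j}\be_2(\bhp)=\big(\bI_n-\cP(\s_2;\bhp)\big)\,\cA(\s_2;\bhp)^{-1}\boldsymbol{\zeta}\ +\ \bV\,\cAr(\s_2;\bhp)^{-1}\bW^\top\bN(\bhp)\Big(\bI_m\otimes\tfrac{\partial^2}{\partial p_i\,\partial p_j}\be_1(\bhp)\Big)
\]
for some $\boldsymbol{\zeta}\in\R^n$ --- a term in $\textup{Ran}(\bI_n-\cP(\s_2;\bhp))$ plus a term in $\textup{Ran}(\bV)$. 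Hitting this with $\bc^\top\bC(\bhp)$: the first term vanishes by the first annihilation identity, while the second equals
\[
\bgr_1^\top(\s_2;\bhp)\,\bW^\top\bN(\bhp)\Big(\bI_m\otimes\tfrac{\partial^2}{\partial p_i\,\partial p_j}\be_1(\bhp)\Big)
=\bc^\top\bC(\bhp)\,\cA(\s_2;\bhp)^{-1}\bN(\bhp)\Big(\bI_m\otimes(\bI_n-\cP(\s_1;\bhp))\cA(\s_1;\bhp)^{-1}\boldsymbol{\xi}\Big),
\]
using $\bgr_1^\top(\s_2;\bhp)\bW^\top=\bc^\top\bC(\bhp)\cA(\s_2;\bhp)^{-1}$ (which holds at $\bhp$ by the left interpolation of the first subsystem) and the structural identity for $\tfrac{\partial^2}{\partial p_i\,\partial p_j}\be_1(\bhp)$; and this is $0$ by the second annihilation identity. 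Hence $\bc^\top\bC(\bhp)\,\tfrac{\partial^2}{\partial p_i\,\partial p_j}\be_2(\bhp)=0$ for all $i,j$, which is the asserted equality of Hessians.

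The main obstacle is the second-order bookkeeping in the step above: one must verify that differentiating $\bffr_2$ twice and matching it against $\bff_2$ leaves exactly one $\textup{Ran}(\bV)$-remainder --- the $\be_1$-second-derivative term --- with \emph{no} leftover, while every factor surviving on the $\bW$-side is differentiated at most once, so that $\cAr_{p_j}=\bW^\top\cA_{p_j}\bV$ and its companions reabsorb it into a term of the stated form. This is where the one-sidedness of \eqref{Vh2} bites: only the right chain satisfies $\partial_{p_j}\be_k(\bhp)=0$, so the two chains must be handled asymmetrically, and the cross terms (a first derivative of one chain times a first derivative of the other) are disposed of using precisely those relations $\partial_{p_j}\be_k(\bhp)=0$. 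Everything else --- the reduction to scalar second partials, the inductive first-order data, the structural identity for $\tfrac{\partial^2}{\partial p_i\,\partial p_j}\be_1(\bhp)$, and the two annihilation identities --- is routine once the projector calculus of Theorems~\ref{thm:pbmor1sided}--\ref{thm:pbmor2sided} is in place; the $\bW$-alternative runs identically with $\bg_k$ and $\cQ$ in place of $\bff_k$ and $\cP$.
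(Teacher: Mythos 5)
Your argument is correct: I checked the first-order data ($\bV\bffr_k(\bhp)=\bff_k(\bhp)$ and $\bV\,\partial_{p_j}\bffr_k(\bhp)=\partial_{p_j}\bff_k(\bhp)$, the latter valid precisely because $\partial_{p_j}\bff_1(\bhp)$ and $\partial_{p_j}\bff_2(\bhp)$ are column selections of $\partial_{p_j}\bV_1(\bhp)$ and $\partial_{p_j}\bV_2(\bhp)$, hence in $\textup{Ran}(\bV)=\textup{Ran}(\cP)$ under \eqref{Vh1}--\eqref{Vh2}), the structural identity for $\partial^2_{p_ip_j}\be_1(\bhp)$, the two-term split of $\partial^2_{p_ip_j}\be_2(\bhp)$ (differentiating $\cA(\s_2;\bp)\bff_2=\bN(\bp)(\bI_m\otimes\bff_1)$ and its reduced analogue twice indeed leaves exactly the single $\textup{Ran}(\bV)$ remainder $\bV\cAr(\s_2;\bhp)^{-1}\bW^\top\bN(\bhp)(\bI_m\otimes\partial^2_{p_ip_j}\be_1(\bhp))$), and the two annihilation identities, which follow from the $\bW$-part of \eqref{Vh1} via $\cA(\bI_n-\cP)=(\bI_n-\cQ)\cA$ and the blocks of $\bg_2(\s_1,\s_2;\bhp)$ lying in $\textup{Ran}(\bW)$. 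The underlying mechanism is the same as in the paper --- one-sided use of the extra derivative vectors on the $\bV$-side together with the $\bW$-side conditions of \eqref{Vh1} to kill the leftover oblique-projector terms --- but your bookkeeping is organized differently: the paper differentiates $\bc^\top\bHr_2$ directly, expands it into the nine displayed terms \eqref{m1}--\eqref{m4}, and replaces each reduced factor by its full counterpart using \eqref{VWint} plus the new memberships $[\bff_1]_{p_j},[\bff_2]_{p_j}\in\textup{Ran}(\bV)$, whereas you factor the output error as $\bc^\top\bC(\bp)\be_2(\bp)$, use $\be_2(\bhp)=\partial_{p_j}\be_2(\bhp)=0$ to collapse the product rule to $\bc^\top\bC(\bhp)\,\partial^2_{p_ip_j}\be_2(\bhp)$, and dispose of that single term with the two annihilation identities. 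Your version is more modular and makes explicit where each hypothesis enters (in particular the asymmetric role of \eqref{Vh2}), at the price of a lemma the paper never states in this form, namely the first-derivative chain matching $\bV\,\partial_{p_j}\bffr_k(\bhp)=\partial_{p_j}\bff_k(\bhp)$; the paper's term-by-term expansion avoids that lemma but requires tracking all cross terms explicitly. Your closing remark that the $\bW$-alternative runs through the mirrored argument with $\bg_k$, $\bgr_k$, and $\cQ$ matches the paper's (equally brief) treatment of that case.
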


\begin{proof}
Assume we have the extra conditions on $ \bV $.
First note that we have interpolation of $ \cbH_\bp \left( \bc^\top  \bH_1 (\s_i;\bhp) \bb \right) $ for $ i \in \{ 1, 2 \} $ since this is the linear case (see \cite{baur2011interpolatory} for details).
Let $ p_i $ and $ p_j $ refer to any entries in the parameter vector $ \bp $.
Recall the definition of the second transfer function of  the full and reduced model, respectively,
	\begin{align*}
	\bH_2 (s_1,s_2;\bp) & = \bC (\bp) \cA (s_1;\bp)^{-1} \bN (\bp) ( \bI_m \otimes \cA (s_1;\bp)^{-1} \bB (\bp) ) , \\
	\bHr_2 (s_1,s_2;\bp) & = \bCr (\bp) \cAr (s_1;\bp)^{-1} \bNr (\bp) ( \bI_m \otimes \cAr (s_1;\bp)^{-1} \bBr (\bp) ) .
	\end{align*}
We take the second partial derivative of $\bHr_2 (s_1,s_2;\bp)$  with respect to $ p_j $ and $ p_i $, apply the definition of the reduced order matrices, rearrange the terms and use the notation in previous proofs to obtain 
	\begin{align}
	 \frac{\partial^2}{\partial p_j \partial p_i}&\left( \bc^\top \bHr_2 ( \s_1, \s_2; \bhp ) ( \bI_m \otimes \bb ) \right) \nonumber\\
	& = \bc^\top \bC_{p_jp_i} (\bhp) \bV \bffr_2 (\s_1,\s_2;\bhp)\label{m1} \\ 
	& \qquad + \bgr_2^\top (\s_1,\s_2;\bhp) \bW^\top \bB_{p_jp_i} (\bhp) \bb \\
	& \qquad + \bgr_1^\top (\s_2;\bhp) \bW^\top \bN_{p_jp_i} (\bhp) ( \bI_m \otimes \bV \bffr_1 (\s_1;\bhp) ) \\
	& \qquad + \bgr_1^\top (\s_2;\bhp) \bW^\top \cA_{p_jp_i} (\s_2;\bhp) \bV \bffr_2 (\s_1,\s_2;\bhp) \\
	& \qquad + \bgr_2^\top (\s_1,\s_2;\bhp) ( \bI_m \otimes \bW^\top \cA_{p_jp_i} (\s_1;\bhp) \bV \bffr_1 (\s_1;\bhp) ) \label{m2}\\ \label{m3}
	& \qquad + \{ \bgr_1^\top (\s_2;\bhp) \bW^\top \bN_{p_j} (\bhp) + \bgr_2^\top (\s_1,\s_2;\bhp) \bW^\top \cA_{p_j} (\s_1;\bhp) \} ( \bI_m \otimes \bV [ \bffr_1 ]_{p_i} (\s_1;\bhp) ) \\
	& \qquad + \{ \bgr_1^\top (\s_2;\bhp) \bW^\top \bN_{p_i} (\bhp) + \bgr_2^\top (\s_1,\s_2;\bhp) \bW^\top \cA_{p_i} (\s_1;\bhp) \} ( \bI_m \otimes \bV [ \bffr_1 ]_{p_j} (\s_1;\bhp) ) \\
	& \qquad + \{ \bc^\top \bC_{p_j} (\bhp) + \bgr_1^\top (\s_2;\bhp) \bW^\top \cA_{p_j} (\s_2;\bhp) \} \bV [ \bffr_2 ]_{p_i} (\s_1,\s_2;\bhp) \\
	& \qquad + \{ \bc^\top \bC_{p_i} (\bhp) + \bgr_1^\top (\s_2;\bhp) \bW^\top \cA_{p_i} (\s_2;\bhp) \} \bV [ \bffr_2 ]_{p_j} (\s_1,\s_2;\bhp),\label{m4}
	\end{align}
	where $\mathbf{M}_{p_jp_i}(\bp)$ denotes the second partial derivative of $\mathbf{M}(\bp)$ with respect to 
$ p_j $ and $ p_i $, and $[ \bff_k]_{p_i}$ denotes the partial derivative of $\bff_k$ with respect $p_i$.
Then, we follow the similar manipulations used in the proof of Theorem \ref{thm:pbmor2sided} for  \eqref{eq:l1}-\eqref{eq:l4}:
Equations \eqref{m1}-\eqref{m2} contain the same terms, and thus we follow the same reasonings that we used for \eqref{eq:l1}-\eqref{eq:l4}.

Then, even though \eqref{m3}-\eqref{m4} contain the new terms $ [ \bff_1 ]_{p_j} (\s_1;\bhp) $ and $ [ \bff_2 ]_{p_j} (\s_1,\s_2;\bhp) $, the same manipulations still apply here 
due to the construction of $\bV$ in \eqref{Vh1} and  \eqref{Vh2}, 
 $ [ \bff_1 ]_{p_j} (\s_1;\bhp) $ and $ [ \bff_2 ]_{p_j} (\s_1,\s_2;\bhp) $ are now also spanned by $ \textup{Ran} (\bV) $ for any $ p_j $. Therefore, we obtain
 $$
\frac{\partial^2}{\partial p_j \partial p_i}\left( \bc^\top \bHr_2 ( \s_1, \s_2; \bhp ) ( \bI_m \otimes \bb ) \right)	 = 
\frac{\partial^2}{\partial p_j \partial p_i}\left( \bc^\top \bH_2 ( \s_1, \s_2; \bhp ) ( \bI_m \otimes \bb ) \right).
 $$
Since $p_j$ and $p_i$ were arbitrary, we obtain the Hessian matching as desired.
The proof would be analogous if we assumed the extra conditions on $ \bW $ instead. 
Only the rearrangement of the terms would change so that the expression depends on $ [ \bg_1 ]_{p_j} (\s_2;\bhp) $ and $ [ \bg_2 ]_{p_j} (\s_1,\s_2;\bhp) $ instead.
\hfill $\square$

\end{proof}

\begin{remark}
As we stated above, one can write the conditions for matching the parameter Hessian of the higher index  subsystems. 
Let $ q $ be the number of subsystems we wish to interpolate.
To obtain the parameter Hessian matching for the general $k^{\rm th}$ order subsystem, i.e., to 
satisfy
	\[ 
	\cbH_\bp \left( \bc^\top \bH_k (\s_1, \dots, \s_k; \bhp) ( \bI_m^{\otimes^{k-1}} \otimes \bb ) \right)
	= \cbH_\bp \left(\bc^\top \bHr_k (\s_1, \dots, \s_k; \bhp) ( \bI_m^{\otimes^{k-1}} \otimes \bb ) \right),
	\qquad k = 1, \dots, q
	\]
we would need  
	\begin{align*}
	\bV_1 (\bp) 	& = \left[ \cA ( \s_1; \bp )^{-1} \bB (\bp) \bb, ~~~ \cdots, ~~~ \cA ( \s_q; \bp )^{-1} \bB (\bp) \bb \right], \\
	\bV_k (\bp)	& = \left[ \cA ( \s_1; \bp )^{-1} \bN (\bp) ( \bI_m \otimes \bV_{k-1} (\bp) ), ~~~\cdots,~~~ \cA ( \s_q; \bp )^{-1} \bN (\bp) ( \bI_m \otimes \bV_{k-1} (\bp) ) \right] , & k=1,\dots,q 		\\		
	\bW_1 (\bp) & = \left[ \left( \cA ( \s_1; \bp ) \right)^{-\top} \bC (\bp)^\top \bc, ~~~ \cdots, ~~~ \left( \cA ( \s_q; \bp ) \right)^{-\top} \bC (\bp)^\top \bc \right], \\
	\bW_k (\bp)	& = \left[ \left( \cA ( \s_1; \bp ) \right)^{-\top} \overline{ \bN} (\bp)^\top ( \bI_m \otimes \bW_1 (\bp) ), ~~~\cdots, ~~~ \left( \cA ( \s_q; \bp ) \right)^{-\top} \overline{ \bN} (\bp)^\top ( \bI_m \otimes \bW_1 (\bp) ) \right] & k=1,\dots, q.
	\end{align*}
(evaluated at $ \bhp $) to be contained in the ranges of the basis $ \bV $ and $ \bW $, respectively,
\emph{together with} either the partial derivatives of the $ \bV_k $'s or the partial derivatives of the $ \bW_k $'s with respect to the parameter entries (evaluated at $ \bhp $).
\end{remark}


\section{Numerical Examples} 
	\label{examples}

In this section, we illustrate the theoretical discussion from Section \ref{problem} using two examples: A nonlinear RC circuit in Section \ref{ex:rc} and an advection-diffusion equation in Section \ref{ex:heat}. Throughout this section, $\bhp^{(i)}$ (or  $\hat{p}^{(i)}$ when the parameter is a scalar) denotes the parameter sampling points we used in constructing the model reduction bases $\bV$ and $\bW$, and $\bp^{(i)}$ (or  ${p}^{(i)}$) denotes the parameter points (which are not sampled) at which we evaluate both reduced and full models to investigate the accuracy of the reduced model. 

\subsection{A nonlinear RC circuit} \label{ex:rc}

We begin with a modified version of a standard benchmark problem for bilinear systems, namely a nonlinear RC circuit \cite{bai2006projection,phillips2003projection}. The original benchmark problem leads to a non-parametric bilinear system. We have revised the problem to add parametric dependence. 
 To clearly motivate this parametric dependence, we include details of the model derivation.

Consider the following SISO parametric nonlinear system
	\begin{align}
	\label{RC}
	\left\{ \begin{array}{l}
		\dot{\bv} (t;p) = \bff ( \bv (t) ; p ) + \bb u (t) \\
		y (t;p) = \bc^\top \bv (t;p),
	\end{array} \right.
	\end{align}
where $ \bv (t;p) \in \R^n $, $ \bb = \bc = \left[ 1 \ 0 \ \cdots \ 0 \right]^\top  \in \R^n$, 
	\begin{align}
	\bff ( \bv ; p ) 
	= \left[ \begin{array}{c} 
		- g ( v_1 ; p ) - g ( v_1-v_2 ; p ) \\
		g ( v_1-v_2 ; p ) - g ( v_2-v_3 ; p ) \\
		\vdots \\
		g ( v_{k-1}-v_k ; p ) - g ( v_k-v_{k+1} ; p ) \\
		\vdots \\
		g ( v_{N-1}-v_N ; p ) 
		 \end{array} \right] ,
	\end{align}
and
	\begin{align}
	g ( v ; p ) = e^{ p v } + v - 1. 
	\end{align}
System \eqref{RC} models a nonlinear RC circuit with $ N $ resistors where the state variable $ \bv (t;p) $ is the voltage at each node,  $ u (t) $ is the input signal to the current source, the ouput $ y (t;p) $ is the voltage between node 1 and ground, and $ g (\nu;p) $ gives the current-voltage dependency at each resistor.
We have introduced a parameter dependency $ p \in \R $ in the exponential term of this current-voltage dependency, which models the influence of the operating temperature on the current.
Following \cite{bai2006projection,phillips2003projection}, we  apply Carleman bilinearization to $ \bff ( \bv ; p ) \approx \bA_1 (p) \bv + \bA_2 (p) ( \bv \otimes \bv ) $ and a second-order approximation of $ g ( v ; p ) \approx (p+1)v + \frac{1}{2} p^2 v^2 $ leading to an approximation of the nonlinear dynamics \eqref{RC} by the following parametric bilinear  system:
	\begin{align}
	\label{RCb}
	\left\{ \begin{array}{l}
		\bE \dot{\bx} (t;p)   = \bA (p) \bx (t;p) + \bN \bx (t;p) u (t) + \bb u (t)	\\
		y_b (t;p)			= \bc^\top \bx (t;p),
	\end{array} \right.
	\end{align}
where
	\begin{align}
		\bx (t;p) &= \left[ \begin{array}{c} \bv (t;p) \\ \bv (t;p) \otimes \bv (t;p) \end{array} \right],\quad\quad\quad\quad\bE = \bI_n,
	& 	\bc & = \bb = \left[ \begin{array}{c} 1 \\ \bf 0 \end{array} \right], \\
	\bA (p) & = \left[ \begin{array}{cc} \bA_1 (p) & \bA_2 (p) \\ \bf 0 & \bA_1 (p) \otimes \bI + \bI \otimes \bA_1 (p) \end{array} \right],
	&	\bN & = \left[ \begin{array}{cc} \bf 0 & ~~ \bf 0 \\ \bb \otimes \bI + \bI \otimes \bb  & ~~ \bf 0 \end{array} \right],
	\end{align}
and we use $y_b(t;p)$ to denote the output of the full-order parametric bilinear system.
Note that the dimension of the bilinear system is $ n = N + N^2 $ and the matrices $ \bA_1 (p) \in \R^{N\times N} $ and $ \bA_2 (p) \in \R^{N\times N^2} $ are given by
	\begin{align*}
	\bA_1 (p) 
	& = (1 + p ) \left[ \begin{array}{ccccc}
	-2 & 1 &  \\
	1 & -2 & 1  \\
	& \ddots & \ddots & \ddots \\
	& & 1 & -2 & 1 \\
	& & & 1 & - 1
	\end{array} \right],
	\end{align*}
and, for $ k = 2, \dots, N - 1 $,
	\begin{align*}
	[\bA_2(p)]_{(1,1)} & = -p^2 , \\
	[\bA_2(p)]_{(1,2)} & = [\bA_2(p)]_{(1,N+1)} = [\bA_2(p)]_{(k,(k-2)N + k - 1)} = [\bA_2(p)]_{(k,(k-1)N + k + 1)} = \\ 
		& = [\bA_2(p)]_{(k,kN + k)} = [\bA_2(p)]_{(N,(N-2)N + N - 1)} = [\bA_2(p)]_{(N,(N-1)N+ N)} = \frac{p^2}{2} , \\
	[\bA_2(p)]_{(1,N+2)} & = [\bA_2(p)]_{(k,(k-2)N + k)} = [\bA_2(p)]_{(k,(k-1)N + k - 1)} = [\bA_2(p)]_{(k,kN + k + 1)} = \\
		& = [\bA_2(p)]_{(N,(N-2)N + N )} = [\bA_2(p)]_{(N,(N-1)N + N-1)} = - \frac{p^2}{2},
	\end{align*}
	where $[\bA_2(p)]_{(i,j)}$ denotes the $(i,j)^{\rm th}$ entry of $\bA_2(p)$.
Note that both $\bA_1(p)$ and $\bA_2(p)$ have the desired  affine structure \eqref{eq:affine} with the nonlinear scalar parametric functions $-p^2$, $\frac{1}{p^2}$, and $-\frac{1}{p^2}$.
 As in the original benchmark problem, we choose $ N = 200 $, and thus obtain a parametric bilinear system of dimension $ n= 40,200$. \textcolor{black}{We are interested in the parameter range $p \in [0,70]$}, and choose two parameter sampling points, $\hat{p}^{(1)} = 1$ and $\hat{p}^{(2)} = 50$. For each sampling point, we focus on the leading $q=2$ subsystems. We choose
$ \{ \sigma_1, \sigma_2 \}$ by running IRKA on the linearized model (by setting $\bN=0$); i.e,
  $\{ \sigma_1, \sigma_2 \} $ correspond to optimal sampling points for the linear model.
With these frequencies, we construct the basis $ \bV_1 $ and $ \bW_1$ (using Theorem {\color{black}\ref{thm:hess}})  that guarantees interpolation of $ \bH_1 (s;p) $, $ \bH_2 (s_1,s_2;p) $, and their sensitives for $ p=\hat{p}^{(1)} = 1,$ and at $ \{ \sigma_1, \sigma_2 \}$.  Similarly, we construct $ \bV_2 $ and $ \bW_2 $ for $ \hat{p}^{(2)} = 50 $. We then construct the global bases $ \bV = [ \bV_1 \ \bV_2 ] $ and $ \bW = [ \bW_1 \ \bW_2 ]$ and obtain a reduced parametric bilinear model of dimension $ r = 12 $ using the projection described in \eqref{eq:romss}; thus we are approximating a parametric bilinear system of dimension 
$n=40,200$ by a reduced parametric bilinear model of dimension $r=12$. To test the accuracy of the parametric reduced model, we simulate and compare the outputs of the original nonlinear model  \eqref{RC}, the full bilinear model \eqref{RCb}, and the reduced bilinear model for two different inputs, $u(t) =  e^{-t} $
and $u(t) = \frac{1}{2} ( \cos ( 5 \pi t) + 1 ) $, and three different parameter values, $p^{(1)} = 18$,
$p^{(2)} = 40$, and $p^{(3)}=62$. Note that these parameter values are not the sampled values; indeed 
 $p_3=62$ is even outside the sampling range $[1,50]$. Moreover, note that the inputs $u(t) =  e^{-t} $
and $u(t) = \frac{1}{2} ( \cos ( 5 \pi t) + 1 ) $ were not used in the model reduction step, i.e., the reduced model is not informed by these choices of excitation. As  Figure \ref{figRC} shows, the parametric reduced bilinear system provides a very accurate approximation to the full bilinear model; their responses are almost indistinguishable. 
Relative $L_2$ errors in the outputs for our three parameter values $ p^{(1)} $, $ p^{(2)} $, $ p^{(3)} $ are listed in Table \ref{tab:rc}, showing a relative error on the order of $10^{-3}$. We also emphasize that the only deviations visible in Figure \ref{figRC} are deviations from the original nonlinear system, due to  Carleman bilinearization, and  not due to the model reduction step. We also note that 
even though the responses might look similar for different parameter values, the scales of the outputs are different.
	\begin{table}
	\centering
	\begin{tabular}{|c||c|c|c|} 
	\hline\noalign{\smallskip}
	\mbox{Input}			& $ p=p^{(1)} $ 	& $ p=p^{(2)} $ 		& $ p=p^{(3)} $	 \\
	\noalign{\smallskip} \hline \noalign{\smallskip} 
	$u(t) = e^{-t}$	& $ 2.54 \times 10^{-3} $	& $ 2.91 \times 10^{-3} $	& $ 1.53 \times 10^{-3} $	 	\\ \hline
	\noalign{\smallskip} $u(t) =   \frac{1}{2} ( \cos ( 5 \pi t) + 1 ) $	
				& $ 2.54 \times 10^{-3} $	& $ 4.33 \times 10^{-3} $	& $ 4.57 \times 10^{-3} $		  \\
	\noalign{\smallskip}\hline
	\end{tabular}
		\caption{Relative $L_2$ output error }
			\label{tab:rc}
	\end{table}
To make the numerical investigations more detailed, we performed a parameter sweep using $10^3$ \textcolor{black}{linearly} sampled points
in the interval \textcolor{black}{$[0,70]$}, then identified the performance of the reduced model measured in terms of the relative $L_2$ error in the output $y_b(t;p)$ for each of the inputs. 
We found that for the first input $u(t) = e^{-t}$, in the  \emph{worst}-case scenario, the reduced model led to 
 a relative $L_2$ output error of {\color{black}$6.31\times10^{-3}$}. 
For the second input $u(t) = \frac{1}{2} ( \cos ( 5 \pi t) + 1 )$, the \emph{worst} 
performance yielded a relative $L_2$ output error of {\color{black}$5.96\times10^{-3}$}.
 These numbers further illustrate the ability of the reduced model to accurately approximate the full order parametric bilinear system. 

	\begin{figure}
	
	\includegraphics*[width=0.45\textwidth]{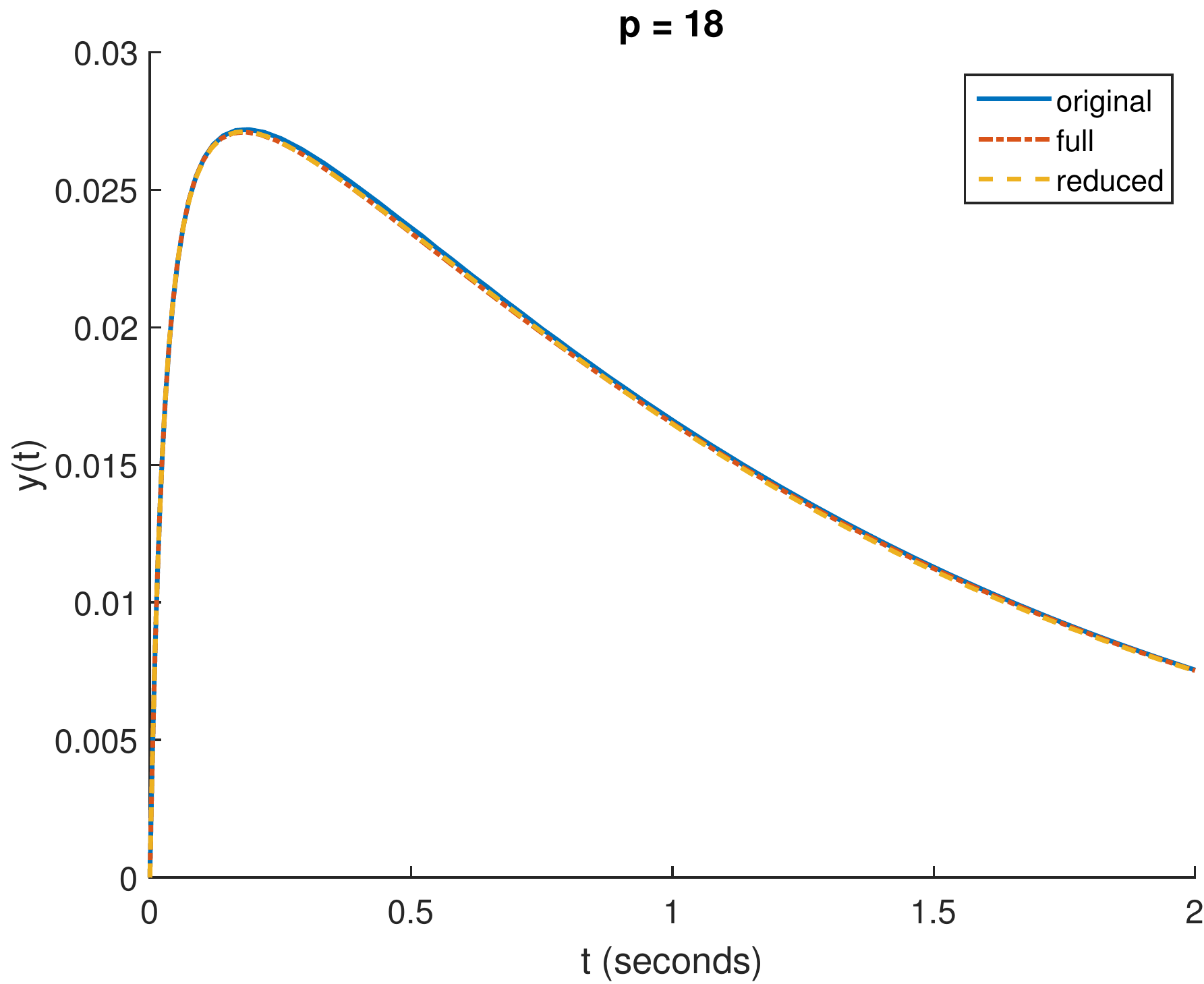} \ 
	\includegraphics*[width=0.45\textwidth]{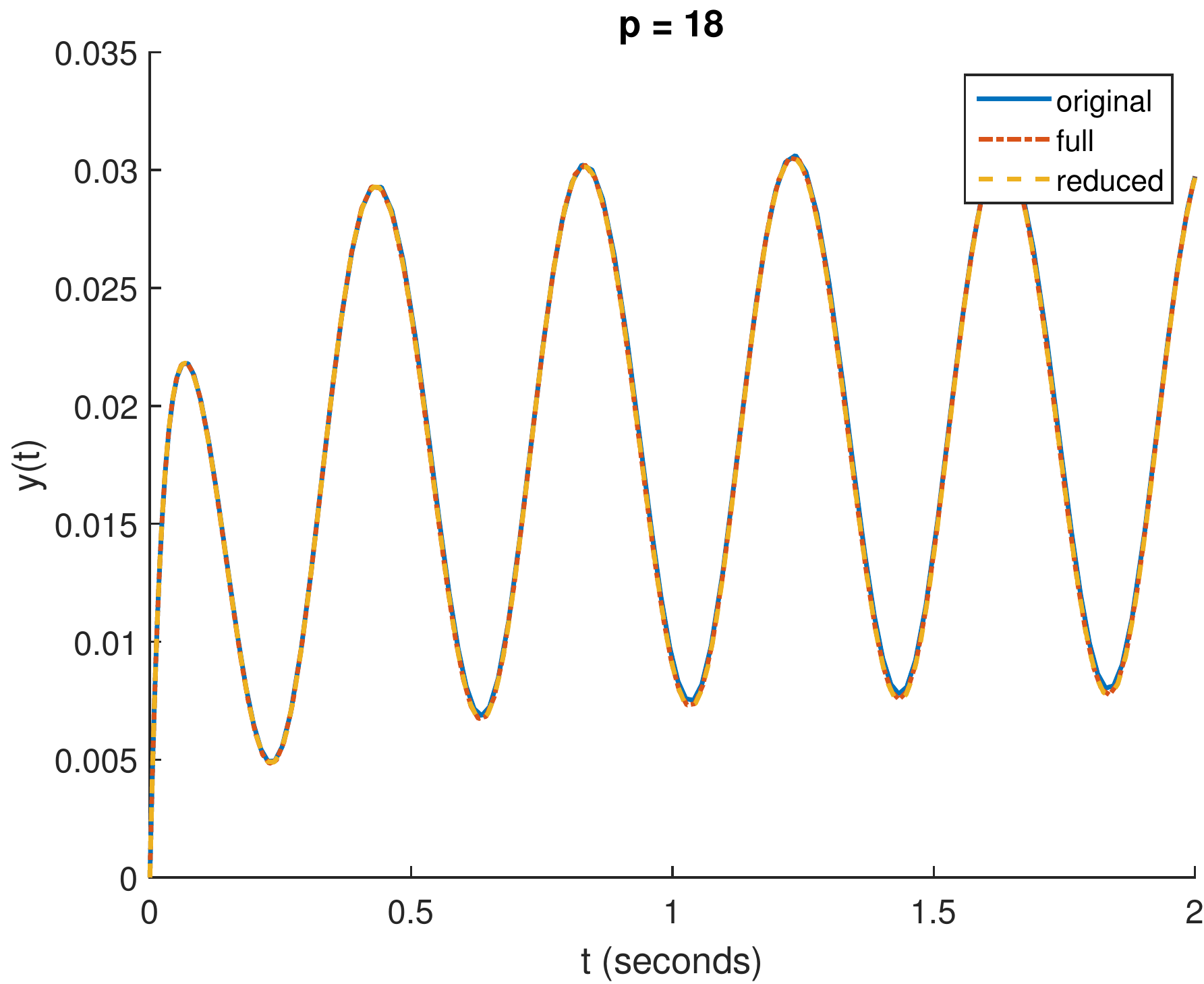} 

	\includegraphics*[width=0.45\textwidth]{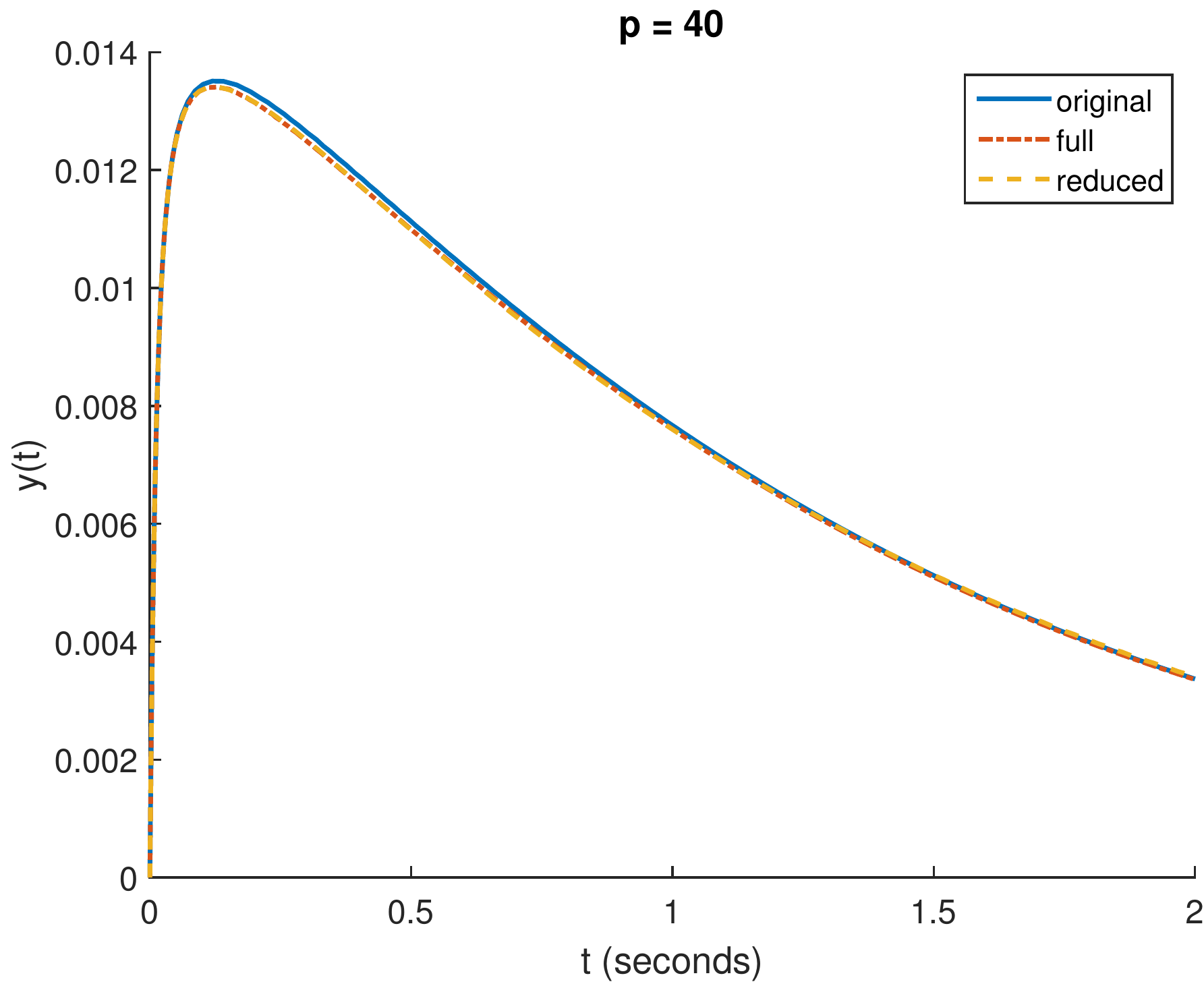} \ 
	\includegraphics*[width=0.45\textwidth]{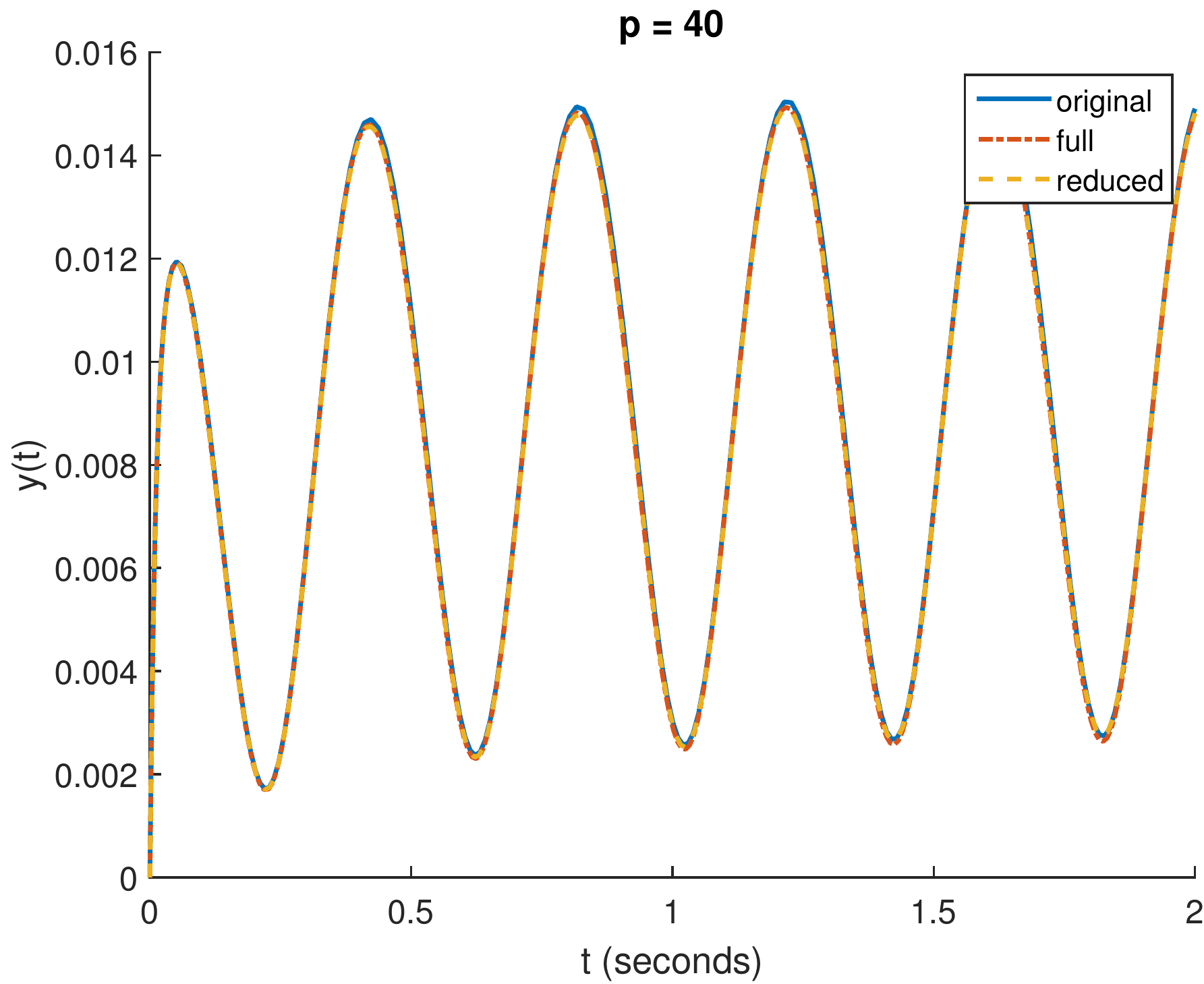} 

	\includegraphics*[width=0.45\textwidth]{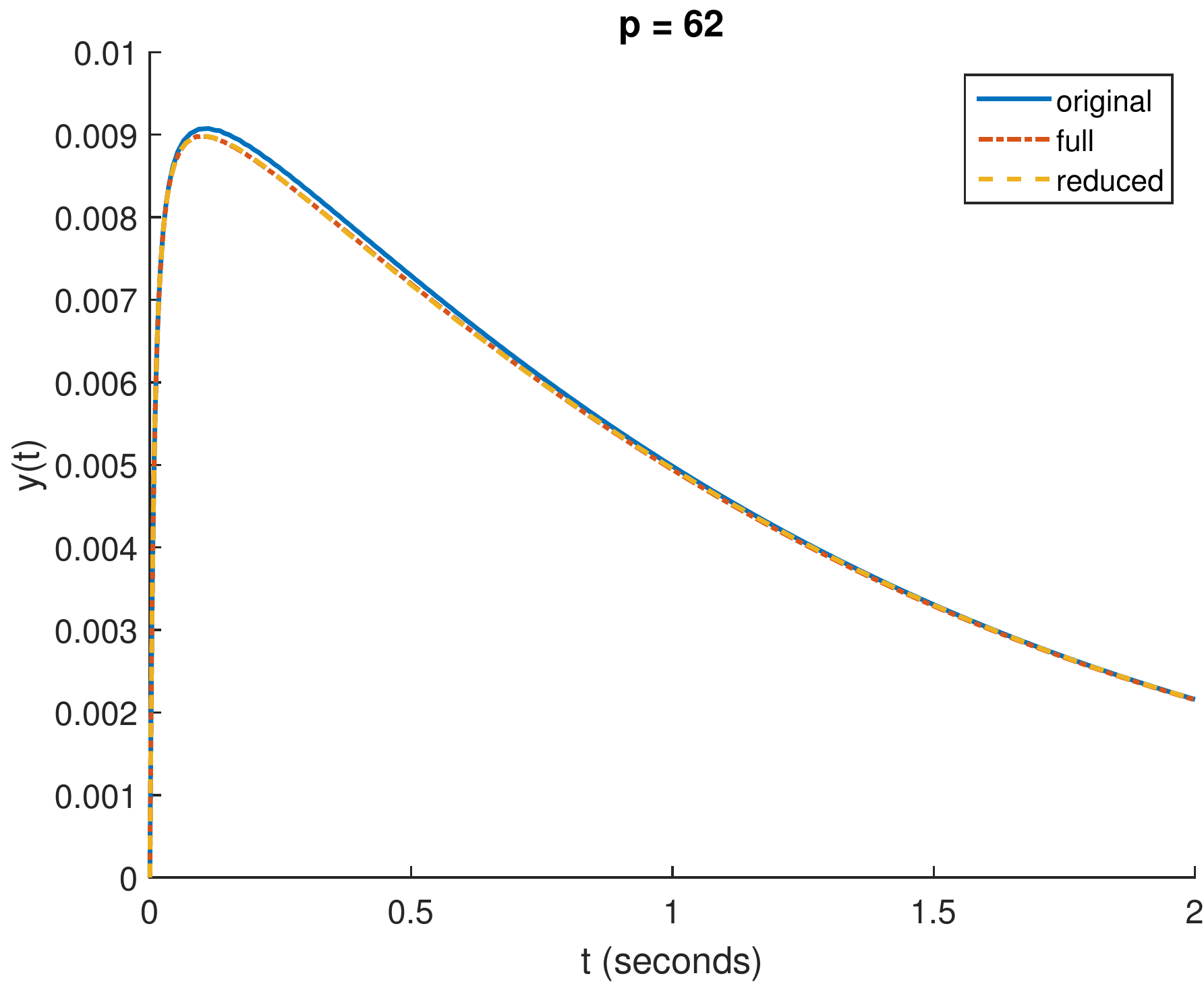} \ 
	\includegraphics*[width=0.45\textwidth]{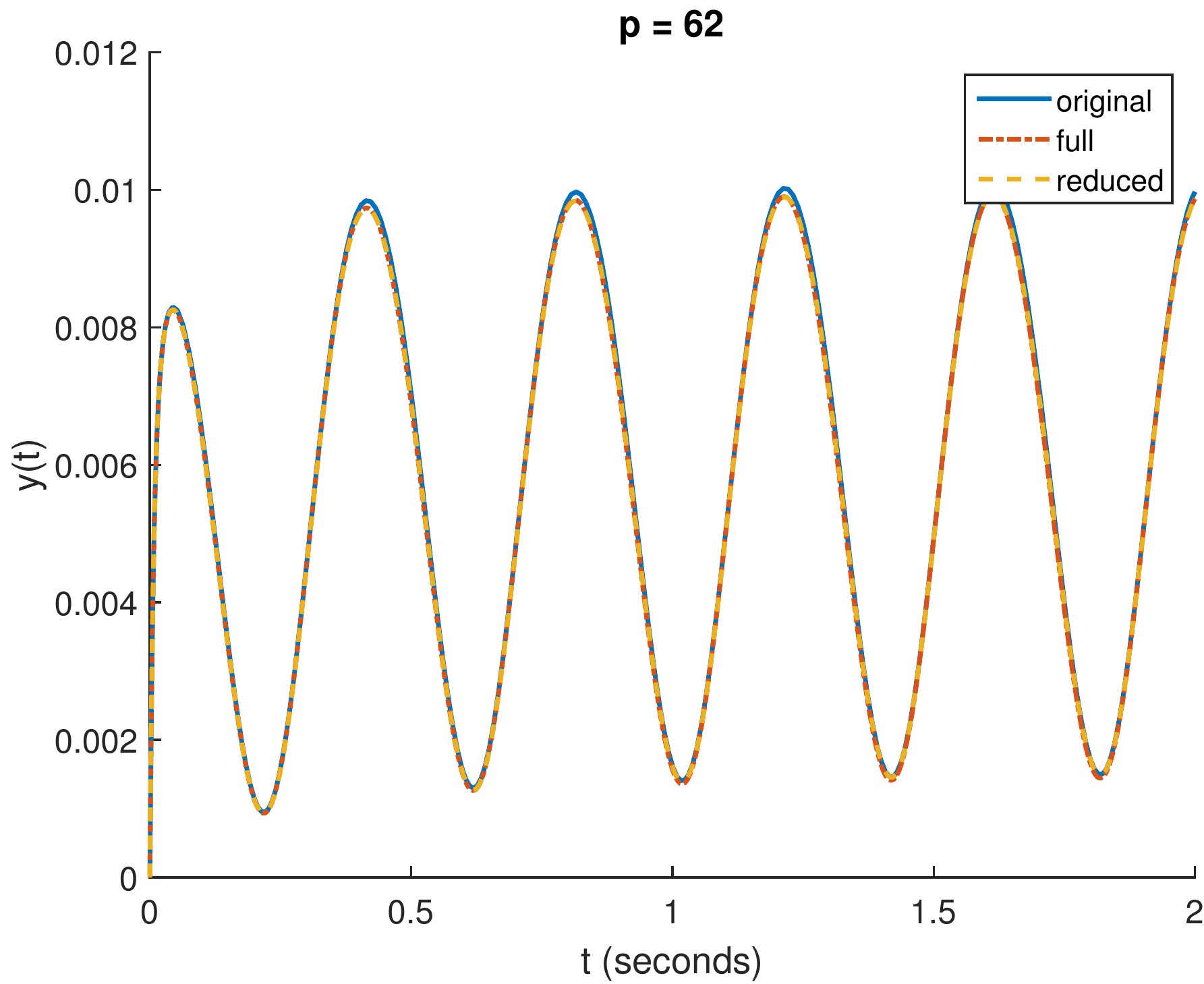} 

	\caption{Solution to \eqref{RC} (denoted by ``original"), \eqref{RCb} (denoted by ``full"), and reduced order model (denoted by ``reduced") for different inputs and parameter values. 
		Left column: $ u (t) = e^{-t} $. 
		Right column: $ u (t) = \frac{1}{2} ( \cos ( 5 \pi t) + 1 ) $.}
	\label{figRC}
	\end{figure}

\subsection{Advection-diffusion equation} \label{ex:heat}

For our second example consider a model of the transport and diffusion of a passive scalar field $T$ (representing a chemical concentration, temperature, etc.) on the domain $ \Omega = [-1,1] \times [-1,1] $. The transport of $T$ is controlled using a background velocity field described with two input parameters $u_1$ and $u_2$, and two velocity fields $\bv_1$ and $\bv_2$.  Thus the background velocity field is 
	$ \bv (x,y) = u_1 (t) \bv_1 (x,y) + u_2 (t) \bv_2 (x,y) $.   The value of the passive scalar on the boundary of $\Omega$ ($\partial\Omega$) is controlled by an input $u_3$.
We model the diffusion using the viscosity parameter $ p_1 $, 
and include a source term centered at $ (p_2, p_3) \in \Omega $ with an area of affect described by $p_4$ given by
	\[ f (x,y;p_2,p_3,p_4) = \exp \left( - \frac{ ( x - p_2 )^2 + ( y - p_3 )^2 }{ p_4 } \right). \]
The strength of the source term is controlled by an input $u_4$.

Our passive scalar field $ T $ then satisfies 
	\begin{align}
	\label{eq:advectionDiffusion}
	\dot{T}(x,y,t) 	& = p_1 \Delta T(x,y,t) - \bv \cdot \nabla T(x,y,t) + u_4(t) f(x,y;p_2,p_3,p_4), 	& 
	(x,y,t) 	& \in \Omega \times (0,\infty) ,				\\
	T (x,y, 0) 	& = T_0 (x,y), 							& 
	(x,y) 		& \in \Omega,							\\
	T (x,y,t) 	& = u_3(t) ,								& 
	(x,y,t) 	& \in \partial \Omega \times (0,\infty).
	\end{align}
Thus our model depends on the parameter vector
	\[ \bp = \left[ \begin{array}{c} p_1 \\ p_2 \\ p_3 \\ p_4 \end{array} \right] . \]
We will consider the following as our parameter range
	\[ -3 \le \ln p_1 \le 1, \qquad (p_2, p_3) \in \Omega, \qquad 1 \le p_4 \le 10.  \]
We approximate solutions to (\ref{eq:advectionDiffusion}) using a finite element discretization $ T_N (x,y,t) = \sum_{j=1}^N x_j (t) \varphi_j (x,y) $, where the $\{\varphi_j\}_{j=1}^N$ arise from quadratic (P2) triangular elements.  For convenience, we will split the summation above into two disjoint parts, one with indices corresponding to boundary nodes (${\cal B}$) and the remainder corresponding to interior nodes (${\cal I}$).  Thus
$\{ 1, 2, \ldots, N \} = {\cal B} \cup {\cal I}$. Upon substituting this into the weak form of (\ref{eq:advectionDiffusion}) and suppressing function arguments, we arrive at
	\begin{displaymath}
	\left( \dot{ \left[ \sum_{j=1}^N x_j \varphi_j \right] } , \varphi_i \right) 
	 = -\left( p_1 \nabla \left[ \sum_{j=1}^N x_j \varphi_j \right] , \nabla\varphi_i \right)
	- \left( \bv \cdot \nabla \left[ \sum_{j=1}^N x_j \varphi_j \right] , \varphi_i \right)
	+ \left( u_4 f , \varphi_i \right), \qquad \forall i\in {\cal I},
	\end{displaymath}
where the boundary integrals vanish since $\varphi_i$ are zero on the boundary when $i\in {\cal I}$.  Interchanging integration in the $L_2$-inner products with the summation leads to
	\begin{align*}
	\sum_{j\in{\cal I}}  \left( \varphi_j , \varphi_i \right) \dot{ x }_j
	& = - p_1 \sum_{j\in{\cal I}} \left( \nabla \varphi_j  , \nabla \varphi_i \right) x_j 
	- p_1 u_3 \sum_{j\in{\cal B}} \left( \nabla \varphi_j  , \nabla \varphi_i \right) 
	\\
	& \quad - u_1 \sum_{j\in{\cal I}}  \left( \bv_1 \cdot \nabla \varphi_j , \varphi_i \right) x_j
	- u_2 \sum_{j\in{\cal I}} \left( \bv_2 \cdot \nabla \varphi_j , \varphi_i \right) x_j
	+ u_4 \left(  f , \varphi_i \right),
	\end{align*}
for each $i\in{\cal I}$.
Letting $ \bx (t) = [x_1(t) \ x_2(t) \ \dots \ x_{|{\cal I}|}(t)]^\top $ and
	\begin{align*}
	[ \bE ]_{ij} 		& = \left( \varphi_i , \varphi_j \right) ,
	& [ \bN_1 ]_{ij} 	& = - \left( \bv_1 \cdot \nabla \varphi_j , \varphi_i \right) ,
	& [ \bb_3 ]_i		& = - p_1 \sum_{k\in{\cal B}} \left( \nabla \varphi_i  , \nabla \varphi_k \right),
	\\
	[ \bA ]_{ij} 		& = -p_1 \left( \nabla \varphi_i  , \nabla \varphi_j \right) ,
	& [ \bN_2 ]_{ij} 	& = - \left( \bv_2 \cdot \nabla \varphi_j , \varphi_i \right) ,
	& [ \bb_4 ]_i		& = \left( f(\cdot;p_2,p_3,p_4) , \varphi_i(\cdot) \right),
	\end{align*}
for $i,j\in{\cal I}$. We can write our discrete problem as
	\begin{align*}
	\bE \dot{\bx} (t;p) = \bA (\bp) \bx (t;p) + \bN_1 \bx (t;p) u_1 (t)  + \bN_2 \bx (t;p) u_2 (t) + \bb_3 (\bp) u_3 (t) + \bb_4 (\bp) u_4 (t) ,
	\end{align*}
or
	\begin{align*}
	\bE \dot{\bx} (t;p) = \bA (\bp) \bx (t;p) + \sum_{i=1}^4 \bN_i \bx (t;p) u_i (t)  + \bB (\bp) \bu (t),
	\end{align*}
where
	\begin{align*}
	\bN & = [ \bN_1 \ \bN_2 \ \bN_3 \ \bN_4 ] = [ \bN_1 \ \bN_2 \ \textbf{0} \ \textbf{0} ] , \\
	\bB (\bp) & = [ \textbf{0} \ \textbf{0} \ \bb_3 (\bp) \ \bb_4 (\bp) ] , ~~\mbox{and}\\
	\bu (t) & = [ u_1 (t) \ u_2 (t) \ u_3 (t) \ u_4 (t)  ]^\top . 
	\end{align*}
We  also include an output $ \by (t;p) = \bc^\top \bx (t;p) $ that represents the average of our scalar field 
over $ [0.5,1] \times [0.5,1] $.
In summary, we have a bilinear parametric  multi-input/single-output system
	\begin{align}
	\label{Heatb}
	\left\{ \begin{array}{l}
		\bE \dot{\bx} (t;p) = \bA (\bp) \bx (t;p) + \sum_{i=1}^4 \bN_i \bx (t;p) u_i (t)  + \bB (\bp) \bu (t) 	\\
		y (t;p)			= \bc^\top \bx (t;p),
	\end{array} \right.
	\end{align}
which can be reduced using the strategy presented in the  previous example.

For our simulations, we chose a 21-by-21 FEM mesh (which results in a FOM of dimension $ n = |{\cal I}| =361 $) and velocity with
	$ \bv_1 (x,y) = [ -y, \ x ]^\top $ 
and 
	$ \bv_2 (x,y) = \frac{1}{2} ( \cos ( \pi ( x - y ) ) + 1 ) [ 1, \ 1 ]^\top $.

Note that the full order matrices $ \bE, \bN_1, \bN_2, \bc $ are constant, $ [\bA (\bp)]_{ij} = -p_1 [\bA]_{ij} $, and $ [\bb_3 (\bp)]_i = - p_1 [\bb_3]_i $, hence the ROM is given by
	\begin{align}
	\label{Heatbr}
	\left\{ \begin{array}{l}
		\bEr \dot{\bxr} (t;p) = \bAr (\bp) \bxr (t;p) + \bNr_1 \bxr (t;p) u_1 (t)  + \bNr_2 \bxr (t;p) u_2 (t) + \bbr_3 (\bp) u_3 (t) + \bbr_4 (\bp) u_4 	(t) \\
		\widetilde{y} (t;p)			= \bcr^\top \bxr (t;p)
	\end{array} \right.
	\end{align}
where
	\begin{align}
	\bEr & = \bW^\top \bE \bV 	&	\bAr (\bp) & = - p_1 \bW^\top \bA \bV \\
	\bNr_j & = \bW^\top \bN_j \bV 	&	\bbr_3 (\bp) & = - p_1 \bW^\top \bb_3 \\
	\bcr^\top & = \bc^\top \bV 	&	\bbr_4 (\bp) & = \bW^\top \bb_4 (\bp).
	\end{align}

Even though the dimension in \eqref{Heatbr} is lower, 
the reduction of the vector $ \bb_4 (\bp) $ can not be done offline (as for the rest of the system matrices),
hence we aim to reduce the cost of computing $ \bb_4 (\bp) $ by means of DEIM approximation  as we discussed in Section \ref{sec:projintro}. In this case, since $ \bb_4 (\bp) $ is a vector, there is no need for a matrix-version and the original DEIM formulation suffices.  Applying DEIM, we want to find a basis $ \bU \in \R^{n\times M} $ where $ M \ll n $ and a row selector $ \bbS $ so that 
	\[ \bb_4 (\bp) \approx \bU ( \bbS^\top \bU )^{-1} \bbS^\top \bb_4 (\bp) 
	\quad \mbox{and} \quad \bbr_4 (\bp)  \approx \bW^\top \bU ( \bbS^\top \bU )^{-1} \bbS^\top \bb_4 (\bp)
	\]
are good approximations. 
This way $  \bW^\top\bU ( \bbS^\top \bU )^{-1} $ can be precomputed offline, while the online computation of $ \bbS^\top \bb_4 (\bp) $ will now only require us to compute the entries in $ \bb_4 (\bp) $ indicated by $ \bbS $.
Clearly the accuracy of this approximation depends on $ \bU $ and $ \bbS $.

We first find $ \bU $ by Proper Orthogonal Decomposition (POD) \cite{lumley,berkooz}.  That is, we generate a matrix of snapshots of the vector $ \bb_4 (\bp) $ and  select the leading  $ M $ left singular vectors to be the columns of $ \bU $.
By taking \emph{enough} snapshots and singular vectors, we expect the range of our basis $ \bU $ to represent the values of $ \bb_4 (\bp) $ over the parameter domain.
We chose a tolerance of $ 10^{-5} $ to truncate the singular values in the POD basis,
 resulting in a DEIM approximation of order $M=33$. To chose the interpolation indices (row selector) in $\bbS$,   we use the Q-DEIM algorithm \cite{drmac2016new} which determines $ \bbS $ using a pivoted QR factorization of $\bU^\top$.
	
In Figure \ref{ADRerrorDEIM} we show the relative error of the Q-DEIM approximation of $\bb_4(\bp)$ over $ 10^4 $ random parameter values in the entire parameter domain.
Note that the maximum relative error is on the order of $ 10^{-4} $, showing the accuracy of the DEIM approximation. Thus, we can confidently use $\bbr_4 (\bp)  \approx \bW^\top \bU ( \bbS^\top \bU )^{-1} \bbS^\top \bb_4 (\bp)$ in our reduced model.

	\begin{center} \begin{figure} [h!]
	\centering
	\includegraphics[width=0.45\textwidth]{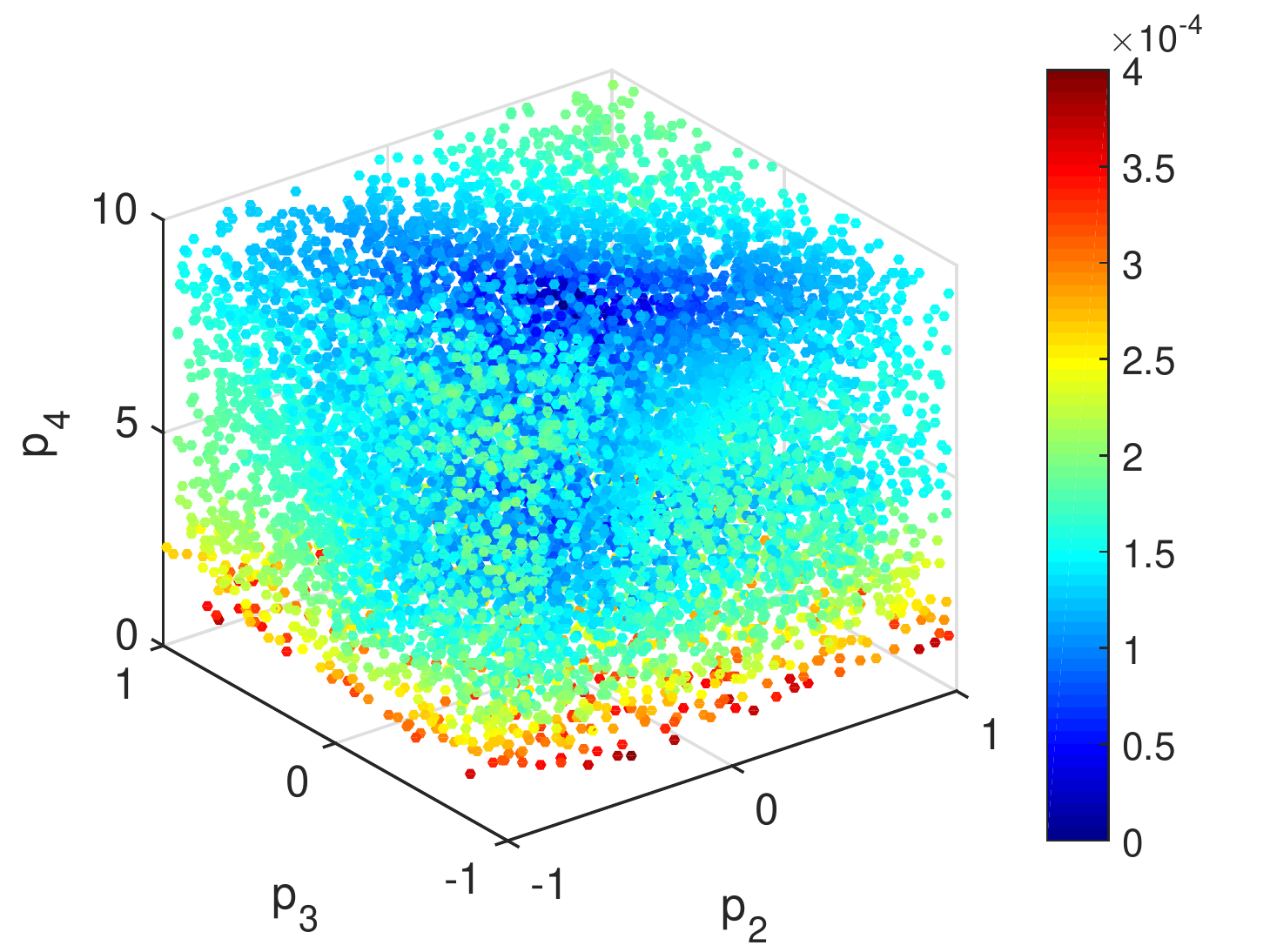}  	
	\caption{Relative error in the DEIM approximation of $ \bb_4  (\bp) $.}
	\label{ADRerrorDEIM}
	\end{figure} \end{center}

To construct our ROM, we sample at four parameter values $ \bhp^{(i)}$ for $i=1,2,3,4$ (see the leading four rows in Table \ref{tab1}).
We calculate the corresponding projection matrices $ \bV_1, \bV_2, \bV_3, \bV_4, \bW_1, \bW_2, \bW_3, $ and $\bW_4 $ using Theorem \ref{thm:hess} 
that guarantee interpolation and sensitivity matching at the frequency interpolation points (generated via IRKA once again) and tangential directions corresponding to each parameter value sampled. To maintain symmetry in $\bEr$ and $\bAr$,
we concatenate all of the projection matrices and consider a one-sided projection, i.e., 
	$ \bV = [ \bV_1 \ \bV_2 \ \bV_3 \  \bV_4 \bW_1 \ \bW_2 \ \bW_3 \   \bW_4] $ 
and 	$ \bW = \bV $.
We truncate the basis (using SVD) and obtain a ROM with dimension $ r = 20 $.
	\begin{table}
	\centering
	\begin{tabular}{|l||ccc|} 
	\hline\noalign{\smallskip}
			& viscosity	 	& source center 	& source reach \\
	\noalign{\smallskip} \hline \noalign{\smallskip}
	$ \bhp^{(1)} $	& 0.1 			& $ (0.25,0.8) $ 		& 1 \\
	$ \bhp^{(2)} $	& 1 			& $ (0,0) $			& 9 \\
	$ \bhp^{(3)} $	& $ e^{-3} $		& $ (1,1) $			& 4 \\
	$ \bhp^{(4)} $	& $ e^{-3} $		& $ (-0.5,-1) $ 		& 1 \\ \hline 
        $ \bp^{(1)} $	& 0.0529 		& $ (0.975,0.9275)	$	& 1.6636 \\
	$ \bp^{(2)} $	& 0.2392 		& $ (0.6914,0.3149) $ 	& 3.6730 \\
	$ \bp^{(3)} $	& 0.1261 		& $ (-0.7224,-0.7623) $ 	& 5.1100 \\
	$ \bp^{(4)} $	& 0.0754 		& $ (-0.3214,0.4988) $ 	& 2.816 \\
	\noalign{\smallskip}\hline
	\end{tabular}
					\caption{Advection-diffusion model. Parameter values.}
						\label{tab1}
	\end{table}

To illustrate the accuracy of the reduced model, we test it for two different inputs sets  (see Table \ref{tab2})
and for four different parameter samples $ \bp^{(i)}$ for $i=1,2,3,4$ (see Table \ref{tab1}, rows 5--9) that were not part of the sampling set. We show the results, the full-order   and reduced-order outputs
in Figures \ref{ADRpar_i2} and \ref{ADRpar_i1}  for two different inputs (see Table \ref{tab2}). Both figures show that for each input selection (neither of which entered into our transfer function-based model reduction process), the parametric reduced bilinear model provides a high-quality approximation, only showing slight variations at the parameter values that were not sampled.
	\begin{table}
	\centering
	\begin{tabular}{|c||cccc|} 
	\hline\noalign{\smallskip}
			& $ u_1 $ 	& $ u_2 $ 	& $ u_3 $	& $ u_4 $ \\
	\noalign{\smallskip} \hline \noalign{\smallskip}
	Input 1	& $ \sin t $	& $ \cos t $	& -1		& 0.5  \\
	Input 2	& 0.5 		& 0.25	& 1		& -1 \\
	\noalign{\smallskip}\hline
	\end{tabular}
		\caption{Advection-diffusion model. Input values.}
				\label{tab2}
	\end{table}

	 \begin{figure} [h!]
	\includegraphics[width=0.45\textwidth]{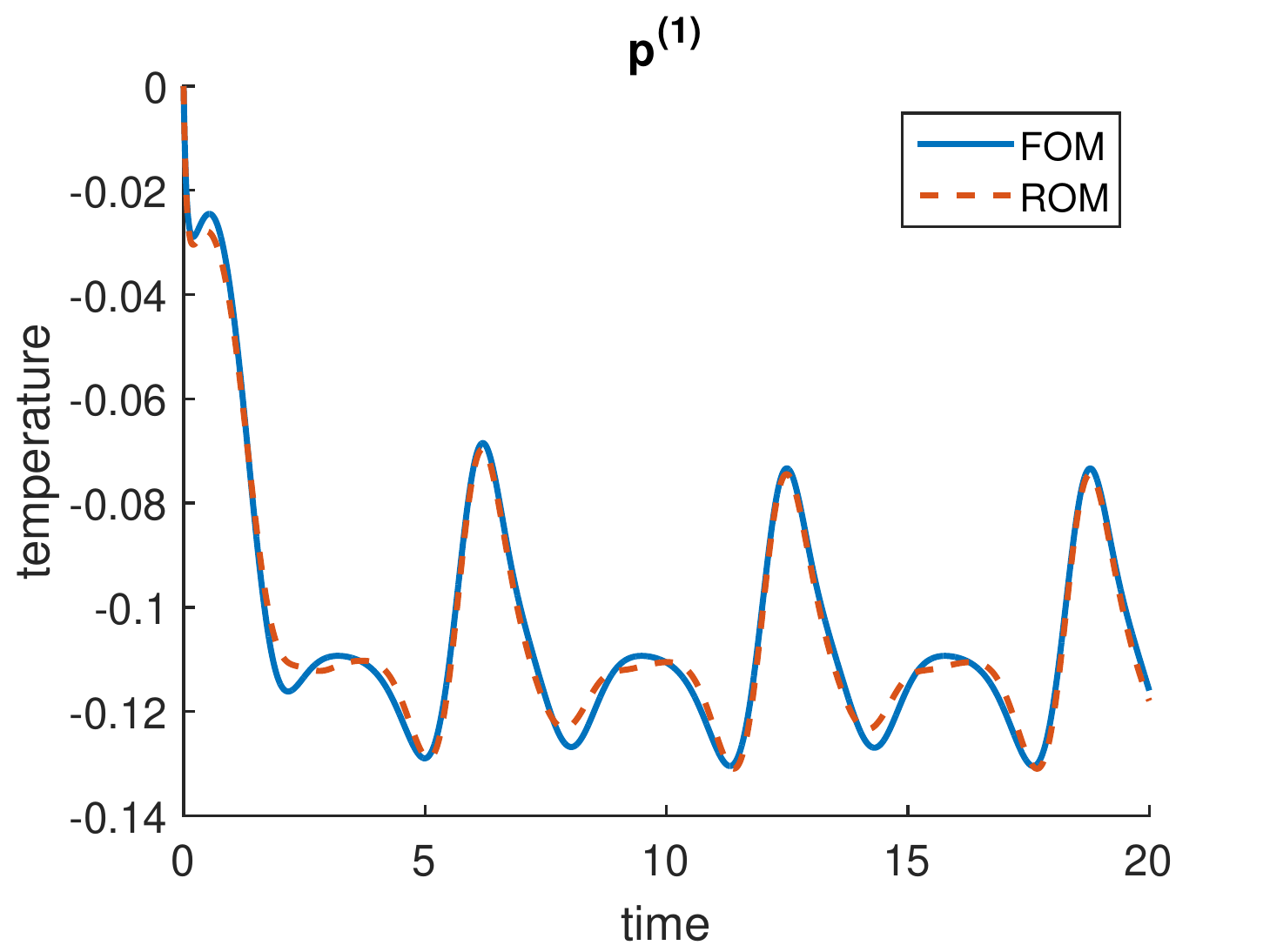}  \
	\includegraphics[width=0.45\textwidth]{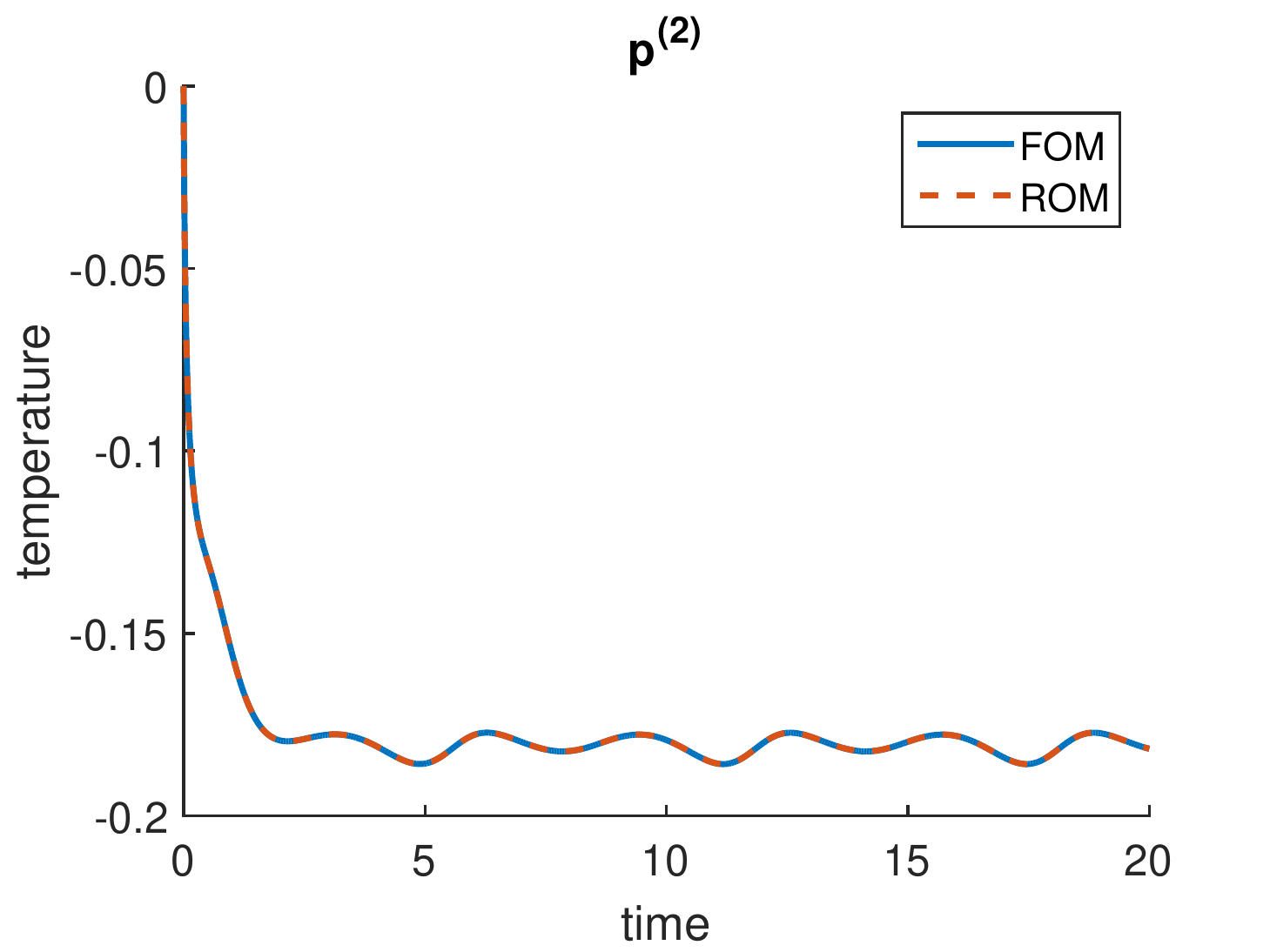}  \\
	\includegraphics[width=0.45\textwidth]{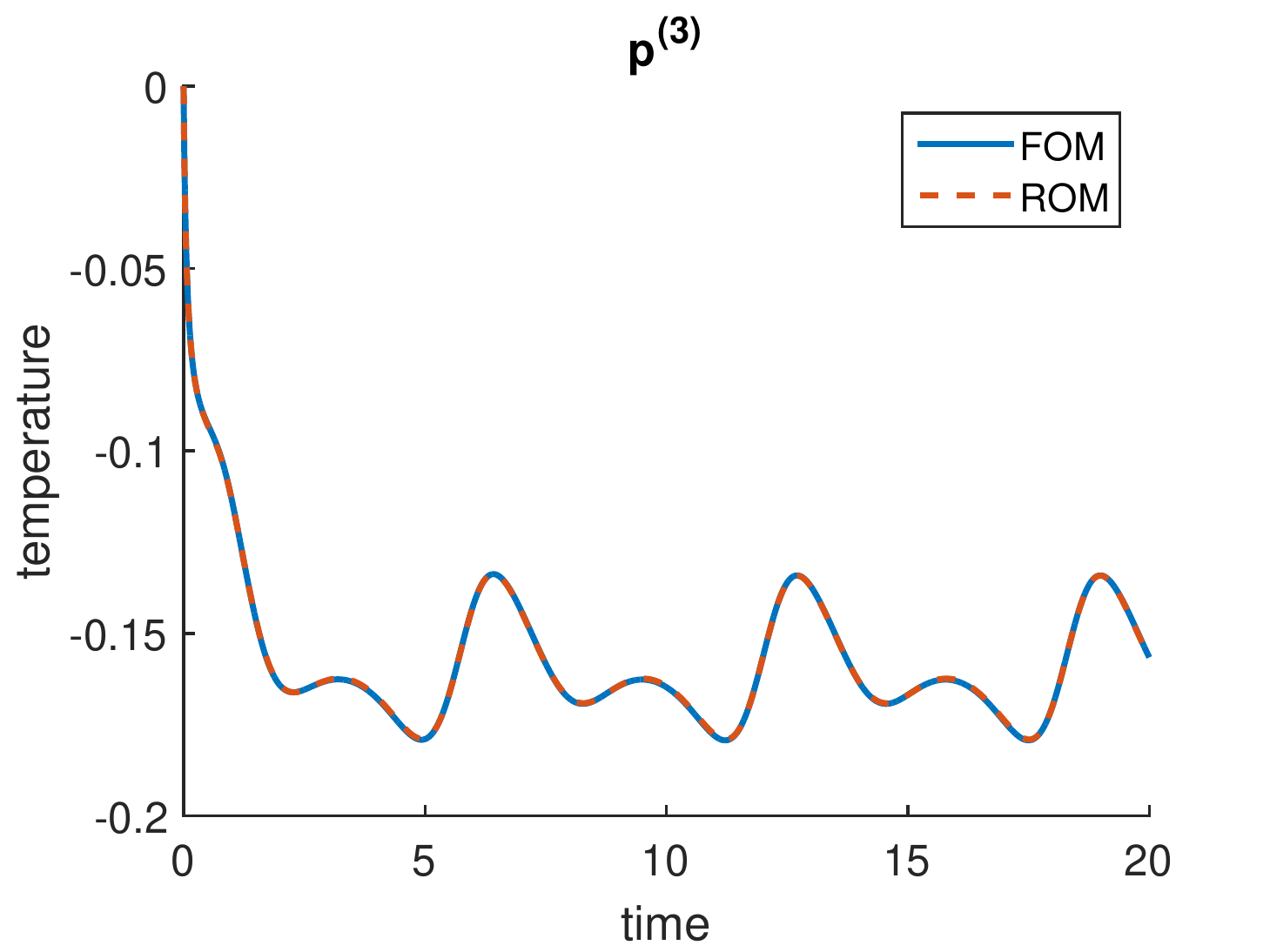}  \
	\includegraphics[width=0.45\textwidth]{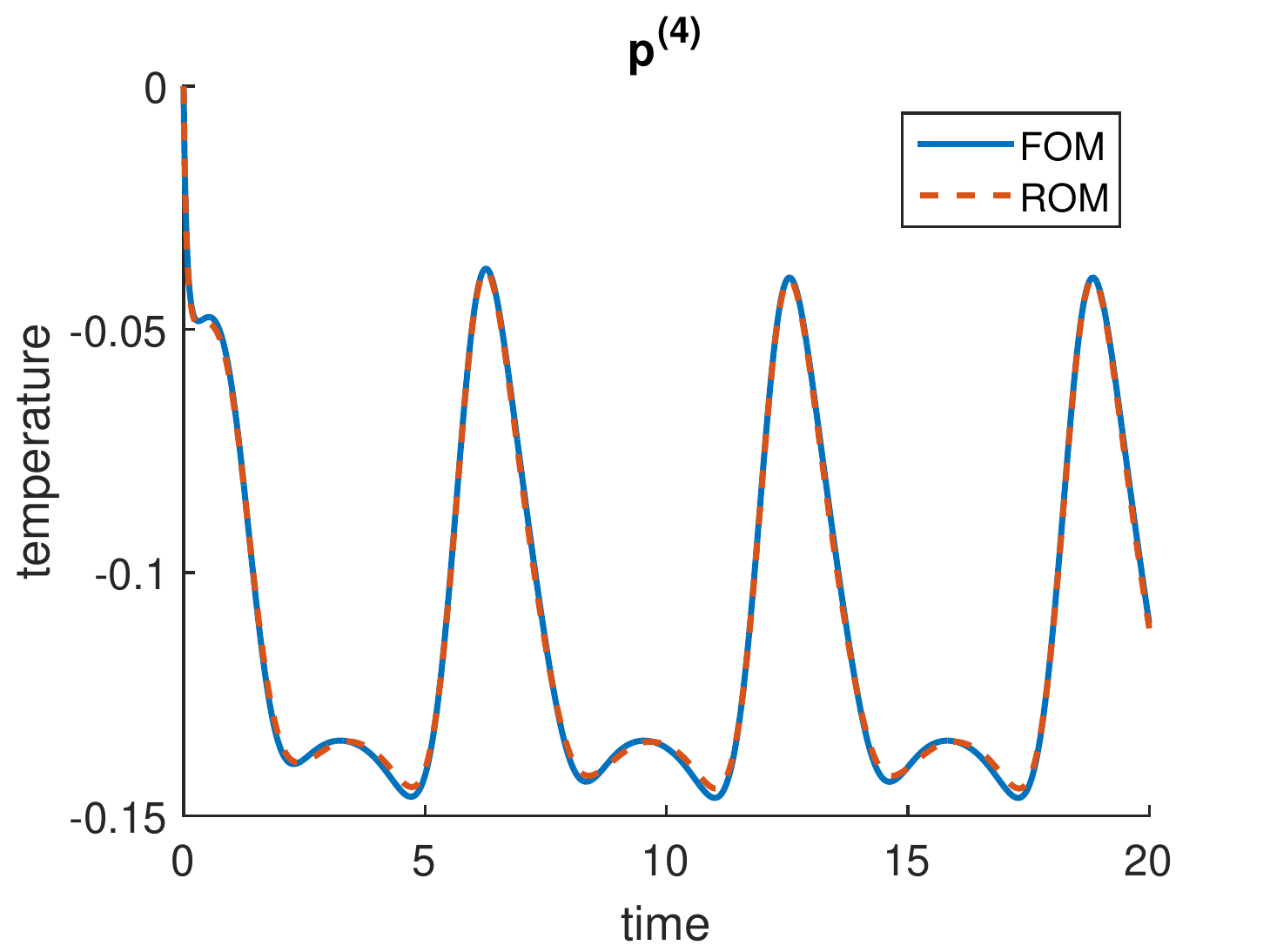}  \

	\caption{Solution of the full (FOM) and reduced order model (ROM) corresponding to non-sampled parameter values (see values in Table \ref{tab1}) for Input 1 with entires 
	$ u_1 (t) = \sin t $, $ u_2 (t) = \cos t $, $ u_3 (t) = -1 $, $ u_4 (t) = 0.5 $.}
	\label{ADRpar_i2}
	\end{figure} 

	 \begin{figure} [h!]
	\includegraphics[width=0.45\textwidth]{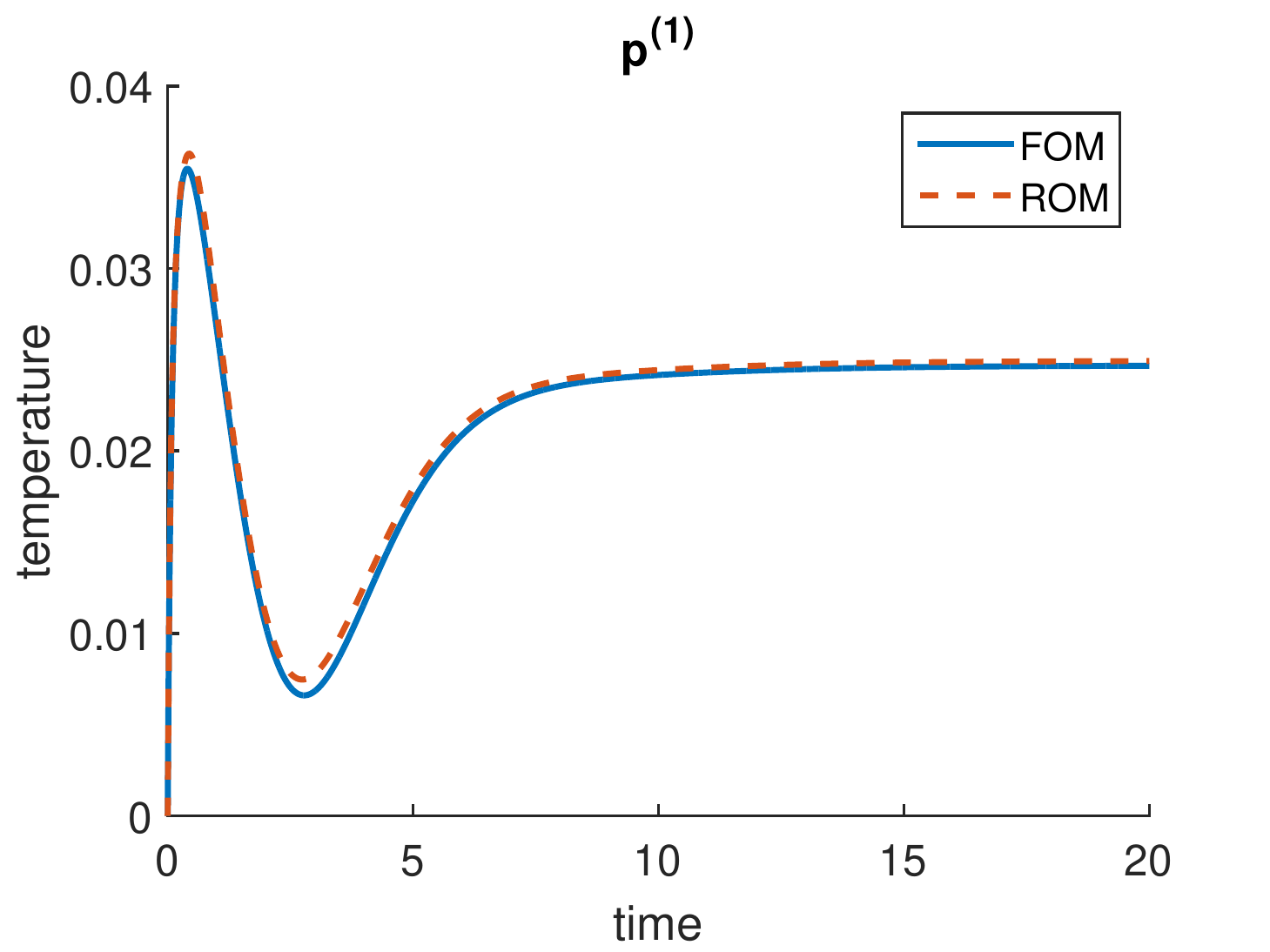}  \
	\includegraphics[width=0.45\textwidth]{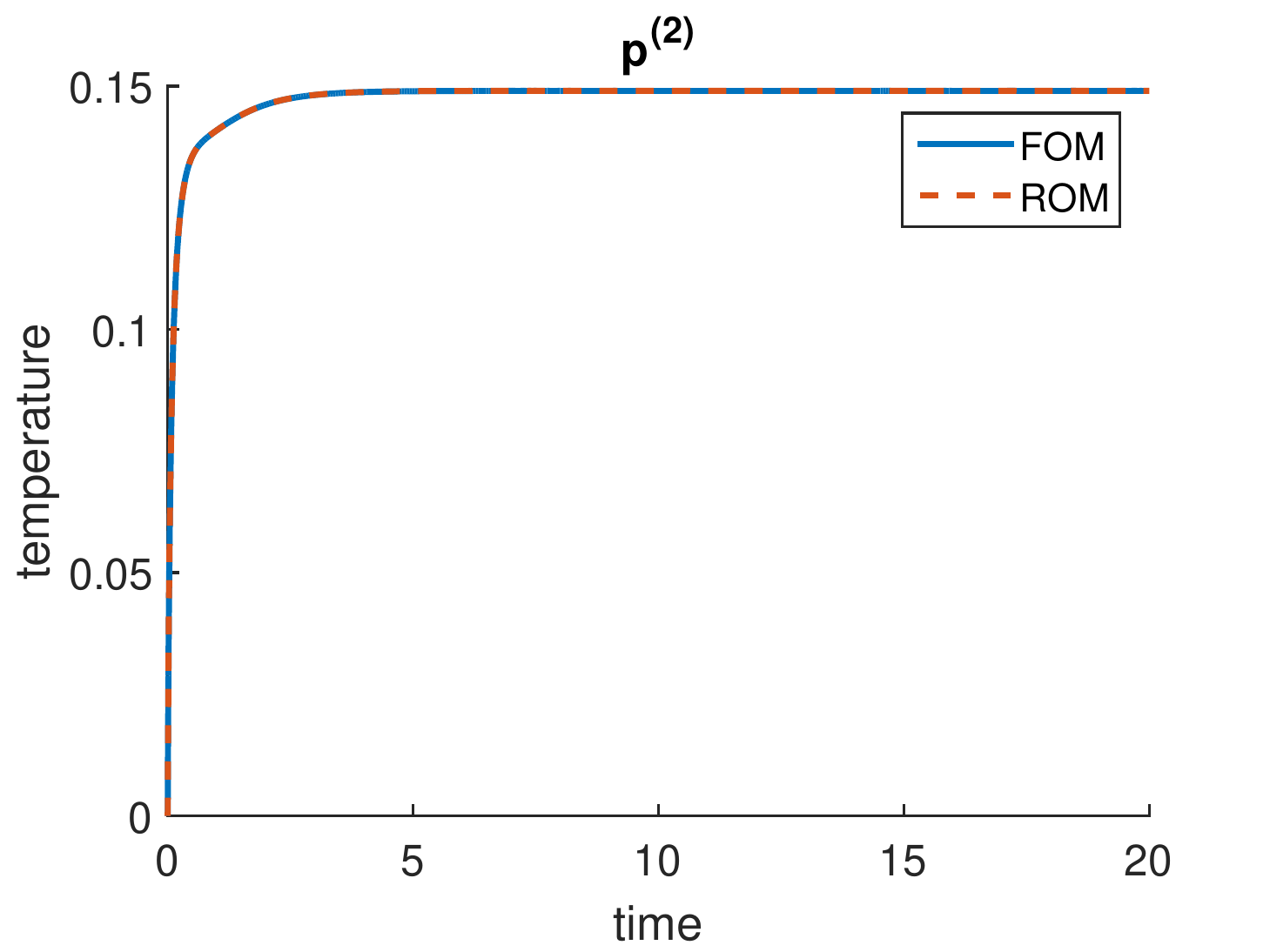}  \\
	\includegraphics[width=0.45\textwidth]{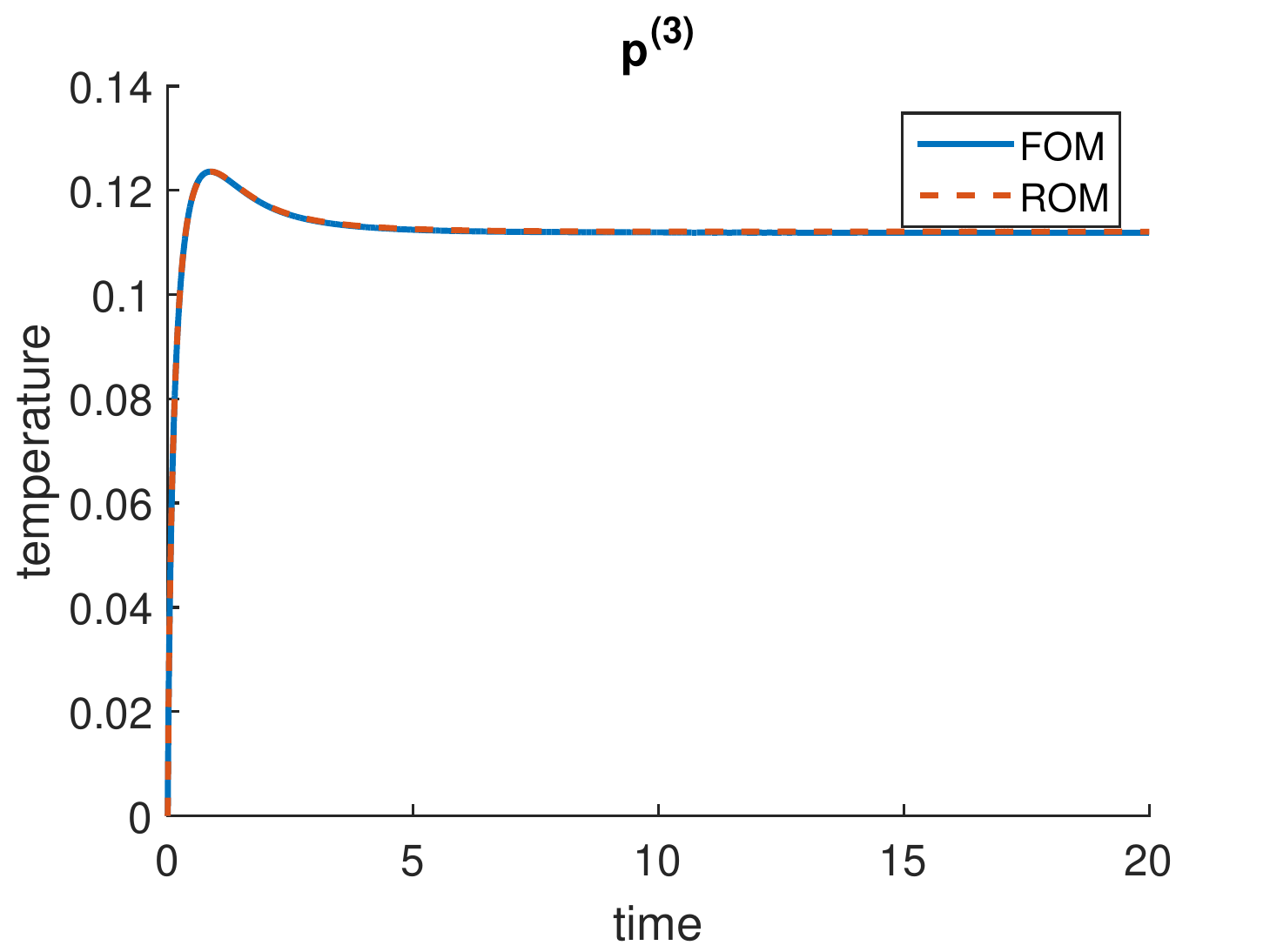}  \ 
	\includegraphics[width=0.45\textwidth]{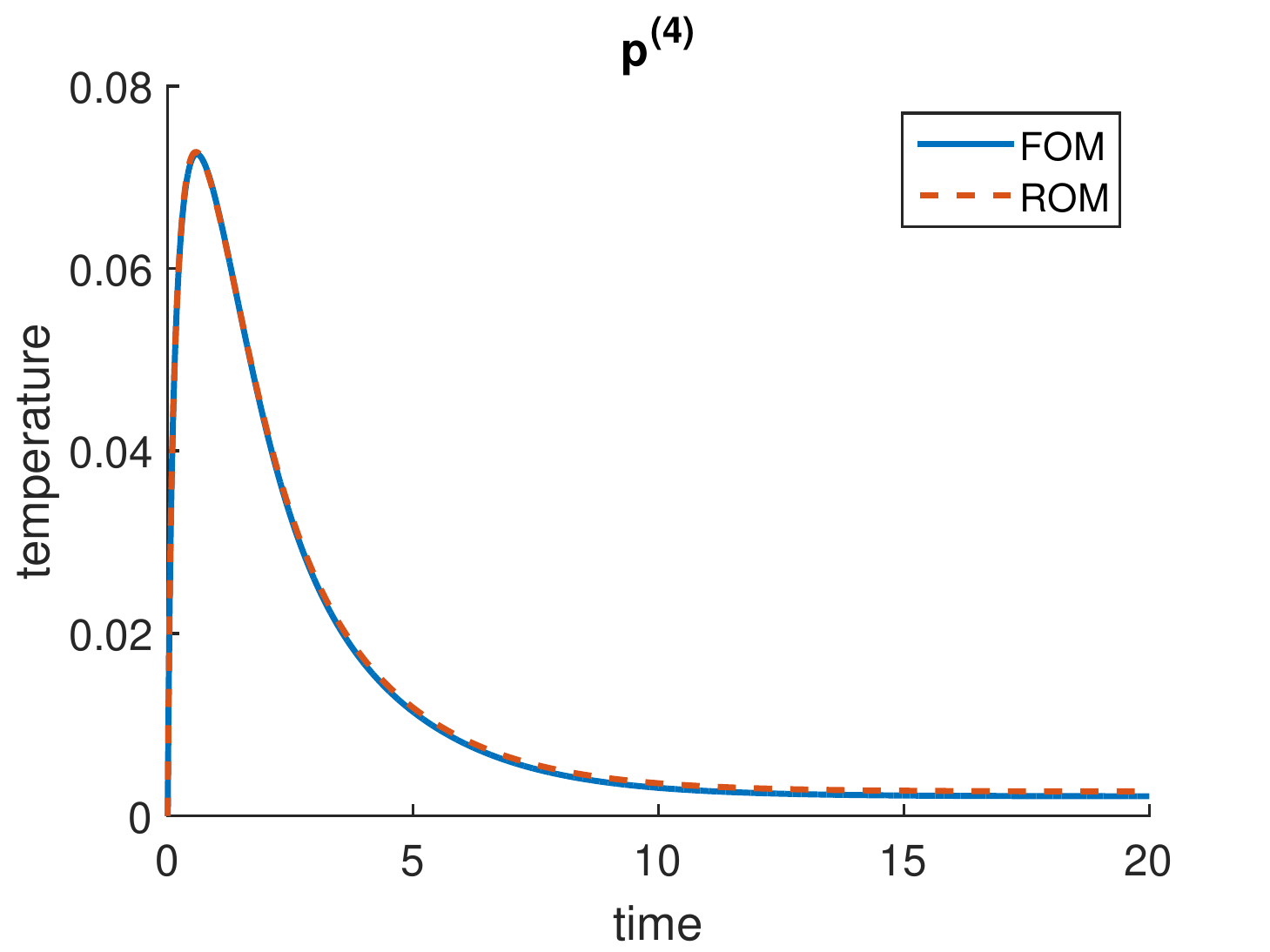}  \

	\caption{Solution of the full (FOM) and reduced order model (ROM) corresponding to non-sampled parameter values (see values in Table \ref{tab1}) for for Input 1 with entires 
	$ u_1 (t) = 0.5 $, $ u_2 (t) = 0.25 $, $ u_3 (t) = 1 $, $ u_4 (t) = -1 $.}
	\label{ADRpar_i1}
	\end{figure}

As in the previous example, to ensure a fair comparison, we performed an exhaustive search via $10^4$ uniform random samples in our full parameter domain (except that we fixed the fourth parameter entry at $ p_4 = 5 $ so that we can present the results with a $3$-dimensional plot) for both input selections from Table~\ref{tab2}. Out of these $10^4$ parameter selections, in Figure~\ref{ADRworst_i2}, we display in the left-plot the relative errors at every sampling point and in the right-plot, the outputs for the \emph{worst} performance of the  reduced model for Input $1$. Note that even for the worst parameter sample, the parametric reduced model still provides an accurate approximation with a relative $L_2$ error of \textcolor{black}{$4.79 \times 10^{-2}$}. We repeat the procedure for Input 2 in Figure~\ref{ADRworst_i1} and obtain similar results. 
	\begin{center} \begin{figure} [h!]

	\includegraphics[width=0.45\textwidth]{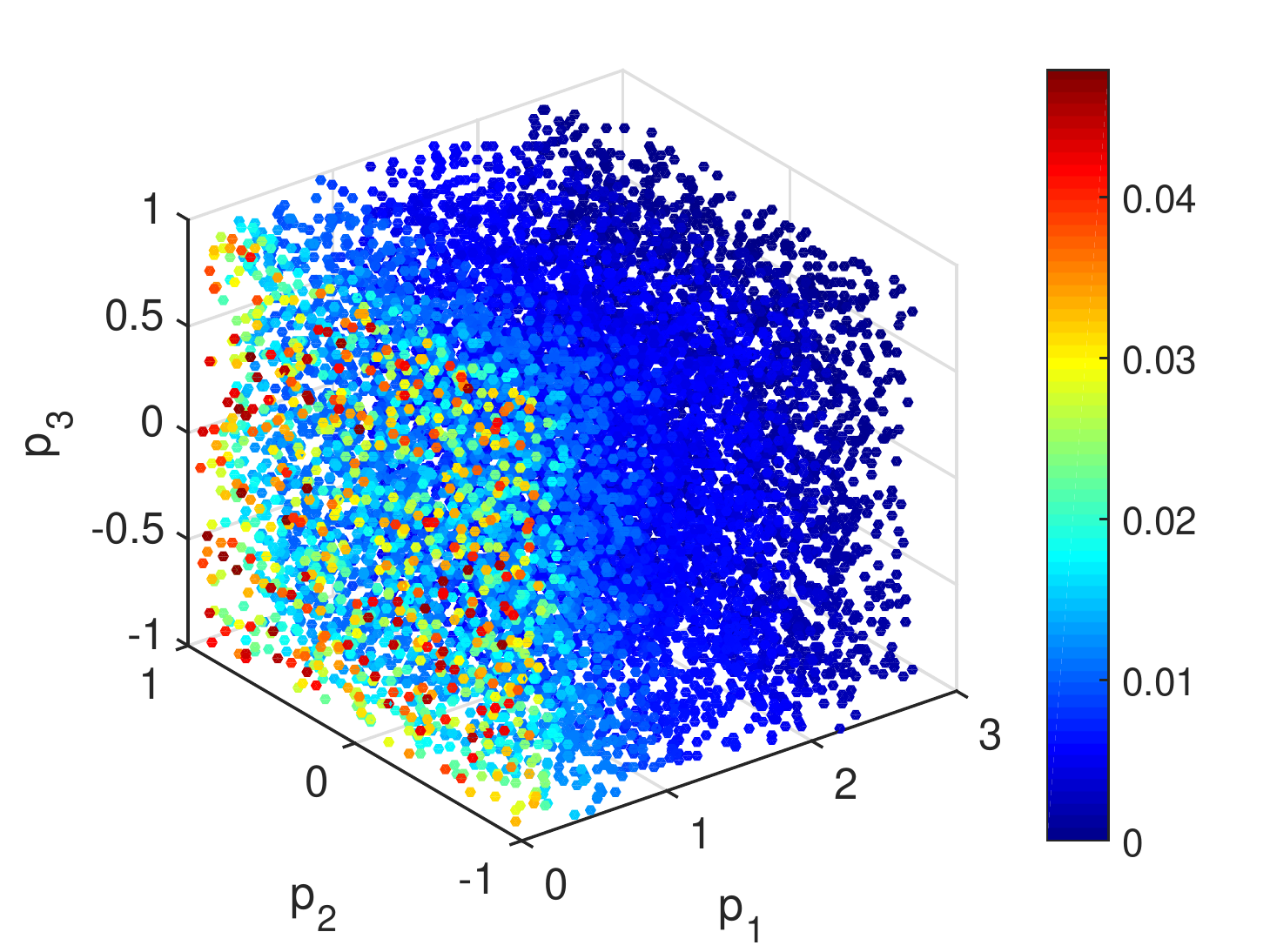} \ 
	\includegraphics[width=0.45\textwidth]{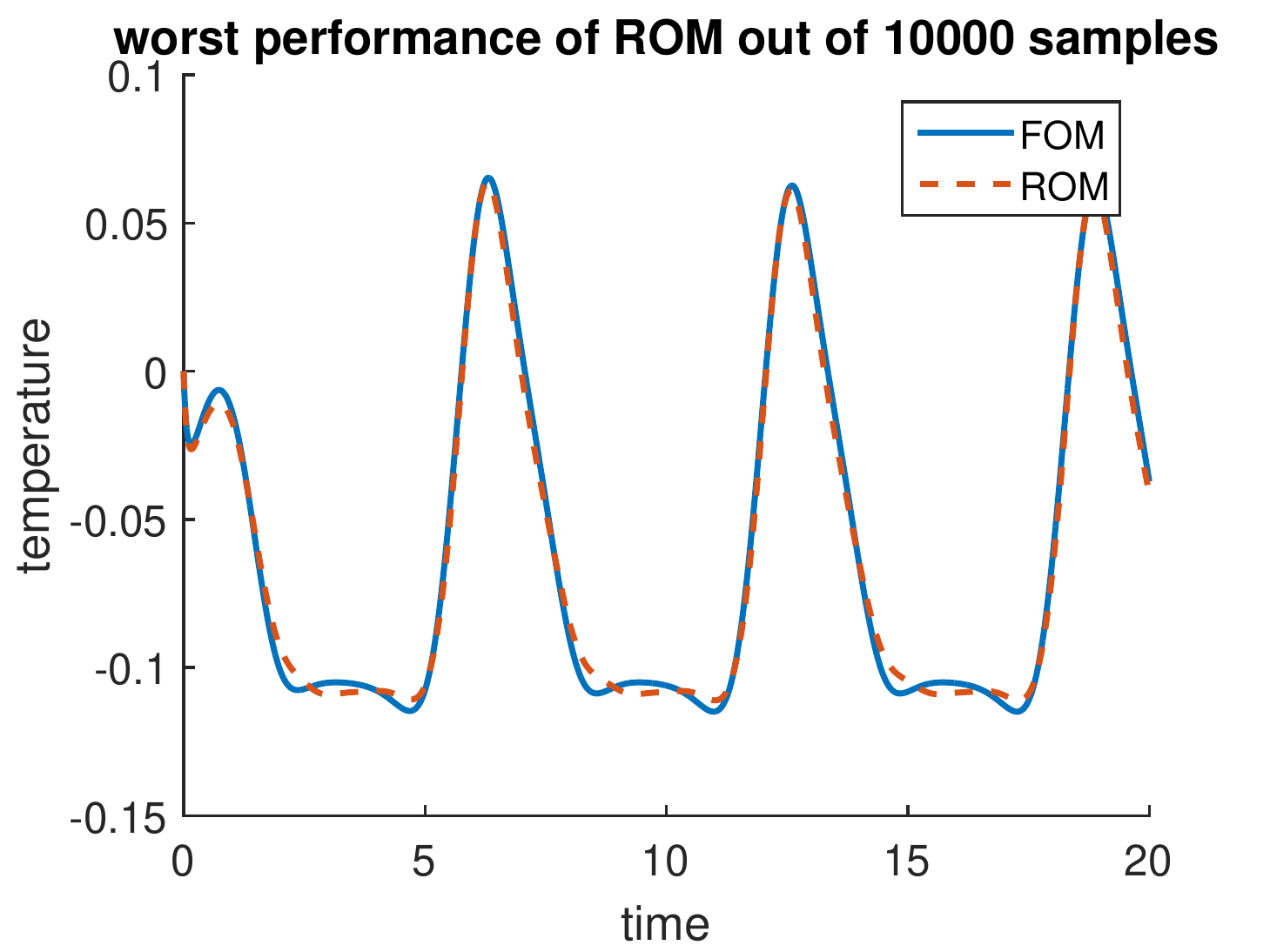}  	
	
	\caption{For Input 1: 
	on the left, the relative $ L_2 $ output error;
	on the right, solution of the full order model and the reduced order model corresponding to the highest relative $ L_2 $ error.}
	\label{ADRworst_i2}
	\end{figure} \end{center}

	\begin{center} \begin{figure} [h!]

	\includegraphics[width=0.45\textwidth]{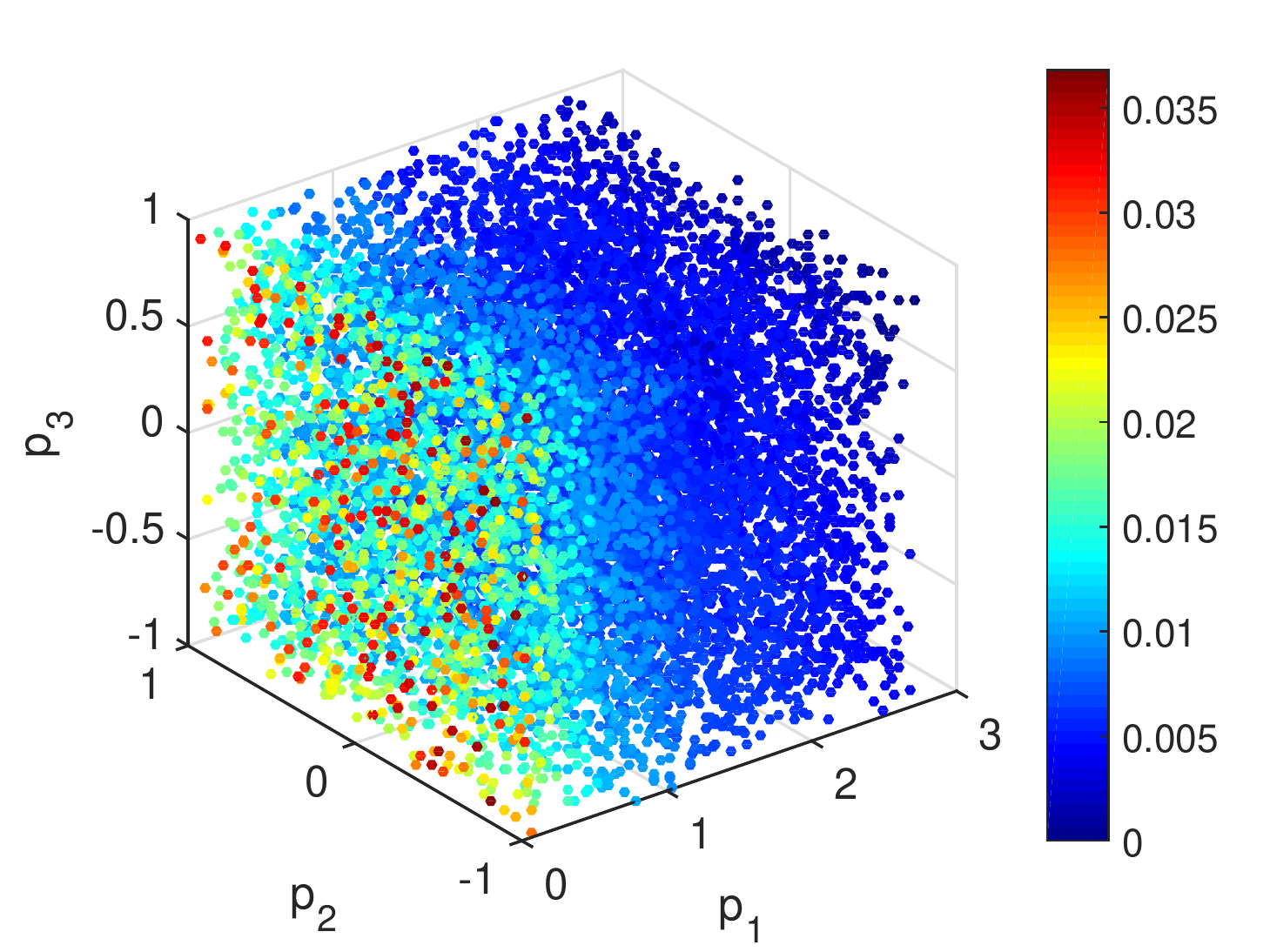} \ 
	\includegraphics[width=0.45\textwidth]{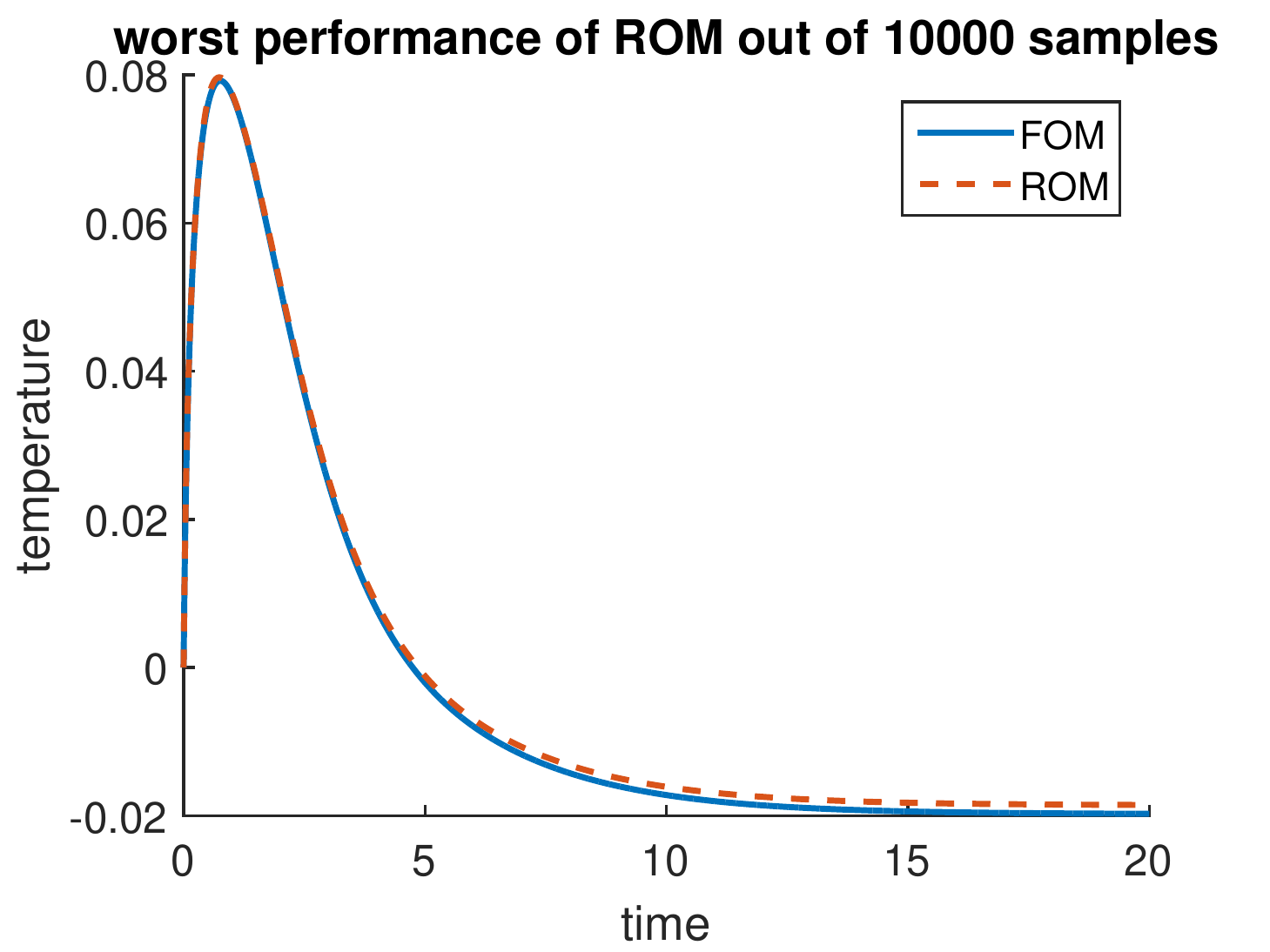}  	
	
	\caption{For Input 2: 
	on the left, the relative $ L_2 $ output error;
	on the right, solution of the full order model and the reduced order model corresponding to the highest relative $ L_2 $ error.}
	\label{ADRworst_i1}
	\end{figure} \end{center}


\section{Conclusions and Future Work} \label{sec:conc}

In this paper, we presented conditions that ensure Hermite interpolation conditions for parametric bilinear systems.  
These conditions also ensure that parametric directional derivatives of the reduced-order transfer functions match the full-order transfer function at given interpolation points and directions.  
We demonstrate the quality of our model reduction using two examples, one a well-known benchmark and the other an interesting advection-diffusion equation.  
The performance was very good, and we emphasize that no effort was made to select the sample points in parameter space.  
In fact, this approach is agnostic to the parameter choices and can be easily embedded in well-known parameter selection schemes.  
The next natural steps are to test this algorithm with different schemes and more challenging problems.  
We also intend to extend this approach to parametric quadratic nonlinear systems.
 
\bibliographystyle{plain}
\bibliography{ArXiv_draft}	

\begin{thebibliography}{10}

\bibitem{ahmad2017krylov}
Mian~Ilyas Ahmad, Peter Benner, and Pawan Goyal.
\newblock Krylov subspace-based model reduction for a class of bilinear
  descriptor systems.
\newblock {\em Journal of Computational and Applied Mathematics}, 315:303--318,
  2017.

\bibitem{al1993new}
SAMIR~A Al-Baiyat and MAAMAR Bettayeb.
\newblock A new model reduction scheme for k-power bilinear systems.
\newblock In {\em Decision and Control, 1993., Proceedings of the 32nd IEEE
  Conference on}, pages 22--27. IEEE, 1993.

\bibitem{AmsallemFarhat2011}
D.~Amsallem and C.~Farhat.
\newblock An online method for interpolating linear parametric reduced-order
  models.
\newblock {\em SIAM Journal on Scientific Computing}, 33(5):2169--2198, 2011.

\bibitem{antoulas2001asurvey}
A.~C. Antoulas, D.~C. Sorensen, and S.~Gugercin.
\newblock A survey of model reduction methods for large-scale systems.
\newblock {\em Contemporary Mathematics}, 280:193--219, 2001.

\bibitem{antoulas2005approximation}
A.C. Antoulas.
\newblock {\em Approximation of Large-Scale Dynamical Systems}.
\newblock SIAM Publications, Philadelphia, PA, 2005.

\bibitem{antoulas2010interpolatory}
A.C. Antoulas, C.A. Beattie, and S.~Gugercin.
\newblock Interpolatory model reduction of large-scale dynamical systems.
\newblock In {\em Efficient Modeling and Control of Large-Scale Systems}, pages
  3--58. Springer, 2010.

\bibitem{antoulas2016model}
Athanasios~C Antoulas, Ion~Victor Gosea, and Antonio~Cosmin Ionita.
\newblock Model reduction of bilinear systems in the {L}oewner framework.
\newblock {\em SIAM Journal on Scientific Computing}, 38(5):B889--B916, 2016.

\bibitem{bai2006projection}
Z.~Bai and D.~Skoogh.
\newblock A projection method for model reduction of bilinear dynamical
  systems.
\newblock {\em Linear Algebra and its Applications}, 415:406--425, 2006.

\bibitem{baur2011interpolatory}
U.~Baur, C.~Beattie, P.~Benner, and S.~Gugercin.
\newblock Interpolatory projection methods for parameterized model reduction.
\newblock {\em SIAM Journal on Scientific Computing}, 33(5):2489--2518, 2011.

\bibitem{baur2014model}
U.~Baur, P.~Benner, and L.~Feng.
\newblock Model order reduction for linear and nonlinear systems: {A}
  system-theoretic perspective.
\newblock {\em Arch. Comput. Methods Eng.}, 21(4):331--358, 2014.

\bibitem{beattie2017model}
C.A. Beattie and S.~Gugercin.
\newblock Model reduction by rational interpolation.
\newblock In P.~Benner, A.~Cohen, M.~Ohlberger, and K.~Willcox, editors, {\em
  Model Reduction and Approximation: Theory and Algorithms. Available as
  http://arxiv.org/abs/1409.2140}. SIAM, Philadelphia, PA, USA, 2017.

\bibitem{benner2012interpolation}
P.~Benner and T.~Breiten.
\newblock Interpolation-based $\mathcal{H}_2$-model reduction of bilinear
  control systems.
\newblock {\em SIAM Journal on Matrix Analysis \& Applications},
  33(3):859--885, 2012.

\bibitem{benner2017model}
P.~Benner, A.~Cohen, M.~Ohlberger, and K.~Willcox.
\newblock {\em Model Reduction and Approximation: Theory and Algorithms}.
\newblock SIAM, 2017.

\bibitem{BenF14}
P.~Benner and L.~Feng.
\newblock A robust algorithm for parametric model order reduction based on
  implicit moment matching.
\newblock In A.~Quarteroni and G.~Rozza, editors, {\em Reduced Order Methods
  for Modeling and Computational Reduction}, volume~9 of {\em MS\&A --
  Modeling, Simulation and Applications}, pages 159--185. Springer, 2014.

\bibitem{benner2015survey}
P.~Benner, S.~Gugercin, and K.~Willcox.
\newblock A survey of projection-based model reduction methods for parametric
  dynamical systems.
\newblock {\em SIAM Review}, 57(4):483--531, 2015.

\bibitem{benner2011generalised}
Peter Benner, Tobias Breiten, and Tobias Damm.
\newblock Generalised tangential interpolation for model reduction of
  discrete-time {MIMO} bilinear systems.
\newblock {\em International Journal of Control}, 84(8):1398--1407, 2011.

\bibitem{benner2011lyapunov}
Peter Benner and Tobias Damm.
\newblock Lyapunov equations, energy functionals, and model order reduction of
  bilinear and stochastic systems.
\newblock {\em SIAM journal on control and optimization}, 49(2):686--711, 2011.

\bibitem{benner2017dual}
Peter Benner, Tobias Damm, and Yolanda~R.~Rodriguez Cruz.
\newblock Dual pairs of generalized {L}yapunov inequalities and balanced
  truncation of stochastic linear systems.
\newblock {\em IEEE Transactions on Automatic Control}, 62(2):782--791, 2017.

\bibitem{berkooz}
G.~Berkooz, P.~Holmes, and J.L. Lumley.
\newblock {The proper orthogonal decomposition in the analysis of turbulent
  flows}.
\newblock {\em Annual Review of Fluid Mechanics}, 25:539--575, 1993.

\bibitem{breiten2010krylov}
T.~Breiten and T.~Damm.
\newblock Krylov subspace methods for model order reduction of bilinear control
  systems.
\newblock {\em Systems \& Control Letters}, 59:443--450, 2010.

\bibitem{BuiThanh2008}
T.~Bui-Thanh, K.~Willcox, and O.~Ghattas.
\newblock Model reduction for large-scale systems with high-dimensional
  parametric input space.
\newblock {\em SIAM Journal on Scientific Computing}, 30(6):3270--3288, 2008.

\bibitem{Chaturantabut2010}
S.~Chaturantabut and D.~Sorensen.
\newblock Nonlinear model reduction via discrete empirical interpolation.
\newblock {\em SIAM Journal on Scientific Computing}, 32(5):2737--2764, 2010.

\bibitem{Daniel2004}
L.~Daniel, O.C. Siong, S.C. Low, K.H. Lee, and J.~White.
\newblock A multiparameter moment matching model reduction approach for
  generating geometrically parameterized interconnect performance models.
\newblock {\em IEEE Transactions on Computer-Aided Design of Integrated
  Circuits and Systems}, 23(5):678--693, 2004.

\bibitem{Degroote2010}
J.~Degroote, J.~Vierendeels, and K.~Willcox.
\newblock Interpolation among reduced-order matrices to obtain parameterized
  models for design, optimization and probabilistic analysis.
\newblock {\em International Journal for Numerical Methods in Fluids},
  63(2):207--230, 2010.

\bibitem{drmac2016new}
Z.~Drmac and S.~Gugercin.
\newblock A new selection operator for the discrete empirical interpolation
  method -- improved a priori error bound and extensions.
\newblock {\em SIAM Journal on Scientific Computing}, 38(2):A631--A648, 2016.

\bibitem{flagg2015multipoint}
G.~Flagg and S.~Gugercin.
\newblock Multipoint {V}olterra series interpolation and $\mathcal{H}_2$
  optimal model reduction of bilinear systems.
\newblock {\em SIAM Journal on Matrix Analysis and Applications}, 2015.

\bibitem{gray1998energy}
W~Steven Gray and Joseph Mesko.
\newblock Energy functions and algebraic gramians for bilinear systems.
\newblock {\em IFAC Proceedings Volumes}, 31(17):101--106, 1998.

\bibitem{Grepl07}
M.A. Grepl, Y.~Maday, N.C. Nguyen, and A.T. Patera.
\newblock Efficient reduced-basis treatment of nonaffine and nonlinear partial
  differential equations.
\newblock {\em Mathematical Modelling and Numerical Analysis (M2AN)},
  41(3):575--605, 2007.

\bibitem{gugercin2008h_2}
S.~Gugercin, A.~C. Antoulas, and C.~Beattie.
\newblock $\mathcal{H}_2$ model reduction for large-scale linear dynamical
  systems.
\newblock {\em SIAM Journal on Matrix Analysis and Applications},
  30(2):609--638, 2008.

\bibitem{gunupudi2003ppt}
P.K. Gunupudi, R.~Khazaka, M.S. Nakhla, T.~Smy, and D.~Celo.
\newblock {Passive parameterized time-domain macromodels for high-speed
  transmission-line networks}.
\newblock {\em IEEE Transactions on Microwave Theory and Techniques},
  51(12):2347--2354, 2003.

\bibitem{hartmann2013balanced}
Carsten Hartmann, Boris Sch\"{a}fer-Bung, and Anastasia Th\"{o}ns-Zueva.
\newblock Balanced averaging of bilinear systems with applications to
  stochastic control.
\newblock {\em SIAM Journal on Control and Optimization}, 51(3):2356--2378,
  2013.

\bibitem{hay2009local}
A.~Hay, J.~Borggaard, and D.~Pelletier.
\newblock Local improvements to reduced-order models using sensitivity analysis
  of the proper orthogonal decomposition.
\newblock {\em Journal of Fluid Mechanics}, 629:41--72, 2009.

\bibitem{hesthaven2016certified}
J.~S Hesthaven, G.~Rozza, and B.~Stamm.
\newblock {\em Certified reduced basis methods for parametrized partial
  differential equations}.
\newblock Springer, 2016.

\bibitem{lumley}
J.L. Lumley.
\newblock {The Structures of Inhomogeneous Turbulent Flow}.
\newblock {\em Atmospheric Turbulence and Radio Wave Propagation}, pages
  166--178, 1967.

\bibitem{mayo2007framework}
AJ~Mayo and AC~Antoulas.
\newblock A framework for the solution of the generalized realization problem.
\newblock {\em Linear algebra and its applications}, 425(2):634--662, 2007.

\bibitem{mohler1970natural}
R.~R. Mohler.
\newblock Natural bilinear control processes.
\newblock {\em IEEE Transactions on Systems Science and Cybernetics},
  6(3):192--197, 1970.

\bibitem{mohler1991nonlinear}
R.~R. Mohler.
\newblock {\em {Nonlinear systems (vol. 2): applications to bilinear control}}.
\newblock Prentice-Hall, Inc. Upper Saddle River, NJ, USA, 1991.

\bibitem{moore1981principal}
B.~Moore.
\newblock Principal component analysis in linear systems: Controllability,
  observability, and model reduction.
\newblock {\em IEEE Transactions on Automatic Control}, 26(1):17--32, 1981.

\bibitem{mullis1976synthesis}
C.~Mullis and R.~Roberts.
\newblock Synthesis of minimum roundoff noise fixed point digital filters.
\newblock {\em IEEE Transactions on Circuits and Systems}, 23(9):551--562,
  1976.

\bibitem{Panzer_etal2010}
H.~Panzer, J.~Mohring, R.~Eid, and B.~Lohmann.
\newblock Parametric model order reduction by matrix interpolation.
\newblock {\em at--Automatisierungstechnik}, 58(8):475--484, 2010.

\bibitem{phillips2003projection}
J.~R. Phillips.
\newblock Projection-based approaches for model reduction of weakly
  nonlinear{,} time-varying systems.
\newblock {\em IEEE Transactions on Computer-Aided Design of Integrated
  Circuits and Systems}, 22:171--187, 2003.

\bibitem{rugh1981nonlinear}
W.J. Rugh.
\newblock {\em Nonlinear system theory}.
\newblock Johns Hopkins University Press Baltimore, MD, 1981.

\bibitem{siu1991convergence}
T.~Siu and M.~Schetzen.
\newblock {Convergence of Volterra series representation and BIBO stability of
  bilinear systems}.
\newblock {\em International Journal of Systems Science}, 22(12):2679--2684,
  1991.

\bibitem{weiner1980sinusoidal}
D.~D. Weiner and J.~F. Spina.
\newblock {\em Sinusoidal Analysis and Modeling of Weakly Nonlinear Circuits:
  With Application to Nonlinear Interference Effects}.
\newblock Van Nostrand Reinhold, 1980.

\bibitem{zimmermann2014locally}
R.~Zimmermann.
\newblock A locally parametrized reduced-order model for the linear frequency
  domain approach to time-accurate computational fluid dynamics.
\newblock {\em {SIAM} Journal on Scientific Computing}, 36(3):B508--B537, 2014.

\bibitem{zimmermann2016local}
Ralf Zimmermann.
\newblock Local parametrization of subspaces on matrix manifolds via derivative
  information.
\newblock {\em SIAM Journal on Scientific Computing}, 36(3):B508--B537, 2014.

\end{thebibliography}

\end{document}